\documentclass[11pt,a4paper]{article}
\usepackage{amsfonts}
\usepackage{physics,amsmath,amssymb,amsthm,amsfonts,xcolor,graphicx,units,xfrac}
\textwidth=15cm  \textheight=22cm
\usepackage{amsmath,amssymb}
\usepackage{mathrsfs}
\usepackage{hyperref}
\usepackage{graphicx}
\usepackage{float}

\oddsidemargin=8mm \evensidemargin=8mm \topskip=0mm
\newtheorem{theorem}{Theorem}[section]
\newtheorem{prop}[theorem]{Proposition}
\newtheorem{eg}[theorem]{Examples}

\newtheorem{lemma}[theorem]{Lemma}

\newtheorem{rem}[theorem] {Remark}

\newtheorem{cor}[theorem]{Corollary}

\numberwithin{equation}{section}\allowdisplaybreaks

\def\leq{\leqslant}

\def\leq{\leqslant}
\def\geq{\geqslant}



\begin{document}

\title{\large\bf  GLOBAL ATTRACTOR FOR WEAKLY DAMPED, FORCED mKdV EQUATION BELOW ENERGY SPACE}

\author{\normalsize \bf PRASHANT GOYAL   \\
}
\date{}
 \maketitle

\AtEndDocument{\bigskip{ Goyal Prashant \\ Department of Mathematics \\ Graduate School of Science \\ Kyoto University \\ Kyoto 606-8501 \\ Japan} \\ \texttt{goyalprashant194@gmail.com}
\bigskip

}

\thispagestyle{empty}
\begin{abstract}

We prove the existence of the global attractor in $\dot H^s$, $s > 11/12$ for the weakly damped and forced mKdV on the one dimensional torus.
The existence of global attractor below the energy space has not been known, though the global well-posedness below the energy space is established.
We directly apply the I-method to the damped and forced mKdV, because the Miura transformation does not work for the mKdV with damping and forcing terms.
We need to make a close investigation into the trilinear estimates involving resonant frequencies, which are different from the bilinear estimates corresponding to the KdV.

%
\end{abstract}


\section{Introduction}
We consider the modified Korteweg-de Vries (in short, mKdV) equation:
\begin{align}    \label{intro10}
&\partial_{t}u + \partial_{x}^{3}u \pm 2\partial_{x} u^{3} +\gamma u = f,\hspace{2.5mm} t>0, \hspace{1.5mm} x \in \mathbb{T}, \\
&u(x,0) = u_{0}(x) \in \dot H^{s}(\mathbb{T}), \label{intro9}
\end{align}
where $\mathbb{T}$ is the one-dimensional torus, $\gamma >0$ is the damping parameter and $f \in \dot H^{1}(\mathbb{T})$ is the external forcing term which does not depends on $t.$ In equation (\ref{intro10}), ``$+$" and ``$-$" represent the focussing and defocussing cases, respectively. We consider the inhomogeneous Sobolev spaces $H^{s} = \{f\hspace{2mm} | \sum_{k \in \mathbb{Z}} \langle k \rangle^{2s}|\hat{f}(k)|^{2}  <  \infty \}$ where $\langle \cdot \rangle = (1+|\cdot|)$ and the homogeneous Sobolev spaces $\dot H^s = \{ f \in H^s| \hat{f}(0) = 0 \}$. The mKdV equation models the propagation of nonlinear water waves in the shallow water approximation. We only consider the focussing case as the defocussing case follows with the same assertion. Also, considering inhomogeneous Sobolev norm is very important as for homogeneous Sobolev norm, Proposition \ref{TL Main result} does not hold for more details (see appendix by Nobu Kishimoto). From the arguments in \cite{G02}, \cite{G01} and \cite{G03}, the existence of global attractor for equations (\ref{intro10})-(\ref{intro9}) directly follows for $s\geq 1$ in $H^{s}$. In the present paper, we prove the existence of global attractor below the energy space in $\dot {H}^{s}(\mathbb{T})$ for $1 > s >11/12.$ \par 

Miura \cite{M01},\cite{M02} and \cite{ME} studied the properties of solutions to the Korteweg-de Vries (KdV) equation and its generalization. Miura \cite{M01} established the Miura transformation between the solutions of mKdV and KdV. Indeed, if $u$ satisfies equation (\ref{intro10}) with $``+"$ sign, then the function defined by
 $$p = \partial_{x}u + iu^{2}$$ 
satisfies the KdV equation, where $i = \sqrt{-1}$. Colliander, Keel, Staffilani, Takaoka and Tao \cite{CKSTT02} presented the $I$-method and proved the existence of global solution for mKdV in the Sobolev space $H^{s}(\mathbb{T})$ for $s \geq 1/2$ by using the Miura transformation. However, the Miura transformation does not work well for the weakly damped and forced mKdV.  In fact, if we consider the mKdV and KdV equations with the damping and forcing term and apply the Miura transformation, we get
\begin{align}     \label{intro3}
p_{t} + p_{xxx} - 6ipp_{x} + \gamma p = (2iu + \partial_{x})f - i\gamma  u^{2}.
\end{align}
It is clear from (\ref{intro3}) that the Miura transformation does not
transform the solution of mKdV equation to the solution of KdV equation.
For this reason, the results of damped and forced KdV can not be directly converted to those of damped and forced mKdV by the Miura transform unlike the case without damping and forcing terms.
\par

The study of global attractor is important as it characterizes the global behaviour of all solutions. 
The asymptotic behaviour of solutions below the energy space has not been known, though the global well-posedness below the energy space is already proved for the Cauchy problem of \eqref{intro10}-\eqref{intro9}.
To study the asympototic behaviour of the solution of mKdV equation below energy space, we need to study the global attractor below energy space. Chen, Tian and Deng \cite{CLX01} used Sobolev inequalities and \textit{a priori} estimates on $u_{x},u_{xx}$ derived by the energy method to show the existence of global attractor in $H^{2}.$ Dlotko, Kania and Yang \cite{TKY} considered more generalized KdV equation and showed the existence for global attractor in $H^{1}.$ 
It is instructive to look at known results on KdV, since KdV has been more extensively studied than mKdV.
Tsugawa \cite{T} proved the existence of global attractor for KdV equation in $\dot{H}^{s}$ for $0 >s >-3/8$  by using the $I$-method. Later, Yang \cite{X} closely investigated Tsugawa's argument to bring down the lower bound from $s >-3/8$ to $s \geq -1/2$.\par

Though mKdV has many common properties with KdV, there is a big difference between KdV and mKdV in the structure of resonance.
For KdV, we consider the homogeneous Sobolev spaces instead of the inhomogeneous one, which eliminates the resonant frequencies in quadratic nonlinearity (see Bourgain \cite{B}).
On the other hand, for the homogeneous mKdV equation, to eliminate the resonant frequencies in cubic nonlinearity, we need to consider the reduced equation (or the renormalized equation) 
\begin{align}   \label{intro 4}
 \partial_{t}u + \partial_{x}^{3}u + 6\left(u^{2} - \frac{1}{2\pi}\|u\|^{2}_{L^{2}}\right)\partial_{x}u = 0.
\end{align}
Without damping and forcing terms, the $L^2$ norm of the solution is conserved.
So, the transformation from the original mKdV eqation to the reduced mKdV equation is just the translation with constant velocity.
But this is not the case with damped and forced mKdV.
The resonant structure of cubic nonlinearity is quite different from that of quadratic nonlinearity. 
Therefore, in the mKdV case, we need to directly handle the resonant trilinear estimate as well as the non-resonant trilinear estimate.
In this respect, it seems difficult to employ the modified energy similar to that used in \cite{T},\cite{X}. Especially, the scaling argument is one of the main ingredient of the $I$ method. So we need to make the dependence of estimates on the scaling parameter $\lambda$ also.
Hence, the following questions naturally arise: How should we treat the nonlinearity of mKdV equation with the damping and forcing terms? When we can not use Miura transformation, how should we treat mKdV equation ? To deal with such issues, we apply the $I$-method directly to \eqref{intro1}-\eqref{intro2} in the present paper and prove the following result: 
\begin{theorem}  \label{intro theorem}
Assume $11/12 < s < 1$ and $u_{0} \in \dot H^{s}$. Let $S(t)$ is the semi-group generated by the solution of mKdV. Then, there exists two operators $L_{1} (t)$ and $L_{2} (t)$ such that  
\begin{align*}
&S(t) u_{0} = L_{1}(t)u_{0} + L_{2}(t) u_{0}, \\
&\sup \limits_{t>T_{1}} \|L_{1}(t)u_{0}\|_{H^{1}} < K,  \\  
&\|L_{2}(t)u_{0}\|_{H^{s}} < K exp(- \gamma (t-T_{1})), \hspace{3mm} \forall \hspace{1mm} t > T_{1},
\end{align*} 
where $K=K(\|f\|_{H^{1}},\gamma)$ and $T_{1}=T_{1}(\|f\|_{H^{1}},\|u_{0}\|_{H^{s}}, \gamma).$
\end{theorem}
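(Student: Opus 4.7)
The plan is to combine the Bogoliubov--Ghidaglia style decomposition of the semigroup, standard for weakly damped dispersive equations, with the $I$-method applied \emph{directly} to the mKdV, since (as the introduction explains) the Miura transform is unavailable in the damped/forced setting. All the non-trivial analytic work is concentrated in a single modified-energy estimate, for which the trilinear estimate in Proposition~\ref{TL Main result} has been prepared.

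First I would derive a uniform-in-time absorbing ball in $\dot H^s$. Testing \eqref{intro10} against $u$ yields the identity
\[
\tfrac{d}{dt}\tfrac12\|u\|_{L^2}^2+\gamma\|u\|_{L^2}^2=\langle f,u\rangle,
\]
so Gronwall produces the eventual bound $\|u(t)\|_{L^2}\le 2\gamma^{-1}\|f\|_{L^2}$. Next, introduce the smoothing operator $I=I_N$ with symbol $m_N(k)=1$ for $|k|\le N$ and $m_N(k)\sim(|k|/N)^{s-1}$ for $|k|>N$, together with the modified energy
\[
E(Iu)=\tfrac12\|\partial_x Iu\|_{L^2}^2\mp\tfrac12\int_{\mathbb T}(Iu)^4\,dx.
\]
Applying $I$ to \eqref{intro10} and pairing with $-\partial_x^2 Iu\pm 2(Iu)^3$ gives, after the usual cancellations,
\[
\tfrac{d}{dt}E(Iu)+2\gamma\|\partial_x Iu\|_{L^2}^2=\bigl\langle If,-\partial_x^2 Iu\pm 2(Iu)^3\bigr\rangle+\mathcal R(u),
\]
where $\mathcal R(u)$ is a commutator remainder measuring the failure of $I$ to commute with $2\partial_x u^3$. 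Invoking Proposition~\ref{TL Main result} bounds $|\mathcal R(u)|\lesssim \lambda^{\ast}N^{-\alpha}\|Iu\|_{H^1}^{\beta}$ for some $\alpha,\beta>0$, and once the scaling parameter $\lambda$ and threshold $N$ are tuned appropriately, the damping on the left absorbs this term and a Gronwall argument yields
\[
\sup_{t\ge T_1}\|Iu(t)\|_{H^1}\le K=K(\|f\|_{H^1},\gamma).
\]

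Finally, let $W(t):=e^{-\gamma t}e^{-t\partial_x^3}$ be the linear damped Airy semigroup and set
\[
L_2(t)u_0:=W(t)u_0,\qquad L_1(t)u_0:=S(t)u_0-L_2(t)u_0=\int_0^t W(t-\tau)\bigl[f-2\partial_x u^3(\tau)\bigr]\,d\tau.
\]
The decay $\|L_2(t)u_0\|_{H^s}=e^{-\gamma t}\|u_0\|_{H^s}$ is immediate. For $L_1$, the bound $\|Iu(t)\|_{H^1}\le K$ from the previous step, together with Proposition~\ref{TL Main result} applied directly to the Duhamel representation, and the exponential weight $e^{-\gamma(t-\tau)}$ provided by the damping, deliver the desired uniform $H^1$ bound $\|L_1(t)u_0\|_{H^1}\le K$ for $t>T_1$.

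The main obstacle is the trilinear commutator $\mathcal R(u)$ in the modified-energy step. Unlike the bilinear analysis used in the KdV works of Tsugawa \cite{T} and Yang \cite{X}, the cubic mKdV nonlinearity has a genuine resonant set $\{(k_1,k_2,k_3):\,(k_1+k_2)(k_2+k_3)(k_1+k_3)=0\}$, and this resonant piece \emph{cannot} be removed by passing to the reduced equation \eqref{intro 4}, because the damping and forcing destroy $L^2$ conservation and so the reduction is no longer a harmless translation. Isolating the resonant contribution and estimating it with an $N^{-\alpha}$ gain, while tracking the scaling parameter $\lambda$ (which is what forces us to work with inhomogeneous Sobolev spaces throughout), is the core technical difficulty, and it is precisely what Proposition~\ref{TL Main result} will supply.
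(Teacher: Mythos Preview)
Your decomposition of $S(t)$ differs from the paper's, and the step bounding $L_1$ in $H^1$ has a genuine gap.

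The paper does \emph{not} use the linear/Duhamel splitting $L_2=W(t)u_0$, $L_1=S(t)u_0-W(t)u_0$. Instead it takes a \emph{frequency projection} of the full solution,
\[
\widehat{L_1(t)u_0}=\widehat{S(t)u_0}\,\mathbf{1}_{|k|<N_t},\qquad \widehat{L_2(t)u_0}=\widehat{S(t)u_0}\,\mathbf{1}_{|k|\ge N_t},
\]
with a threshold that \emph{grows in time}, $N_t^{2(1-s)}=K_2\exp(\gamma(t-T_1))$. The $I$-method (Proposition~\ref{proposition 1}, whose almost-conservation input is the $4$- and $6$-linear Lemma~\ref{Energy 1 estimate}, not Proposition~\ref{TL Main result}) yields $\|I_{N_t}u(t)\|_{H^1}\le K_3$ uniformly for $t>T_1$. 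Since $I_{N_t}$ is the identity below frequency $N_t$, this immediately bounds $\|L_1(t)u_0\|_{H^1}$, while $\|L_2(t)u_0\|_{H^s}\le N_t^{s-1}\|I_{N_t}u(t)\|_{H^1}$ decays because $N_t\to\infty$.

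Your Duhamel route, by contrast, requires $\int_0^t W(t-\tau)\partial_x u^3(\tau)\,d\tau\in H^1$ knowing only $u\in H^s$ with $s<1$. That is a \emph{nonlinear smoothing} statement---roughly $J[u,u,u]\in Z^{s+\varepsilon}$ for some $\varepsilon>0$ when $u\in Y^s$---and Proposition~\ref{TL Main result} does not supply it: that estimate places $J$ back in $X^{s,-1/2}$ with no gain of spatial regularity. The modified-energy bound $\|I_Nu(t)\|_{H^1}\le K$ you obtain controls only $\|u(t)\|_{H^s}$; feeding that through the Duhamel formula and Proposition~\ref{TL Main result} again returns $H^s$, not $H^1$. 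In addition, your sketch keeps $N$ fixed, whereas the entire mechanism by which the paper extracts asymptotic $H^1$ compactness is to let $N_t\to\infty$ with $t$ and read off the low-frequency part of $u(t)$.
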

In Theorem \ref{intro theorem}, the map $L_{1}$ is uniformaly compact and $L_{2}$ uniformly convergs to $0$ in $H^{s}$. Therefore, from  \cite[Theorem $1.1.1$]{R}, we get the existence of global attractor. For the proof of Theorem \ref{intro theorem}, we consider the following equation:

\begin{align}   \label{intro1}
 &\partial_{t}u + \partial_{x}^{3}u + 6\left(u^{2} - \frac{1}{2\pi}\|u\|^{2}_{L^{2}}\right)\partial_{x}u + \gamma u = F \hspace{4mm} t > 0, x \in \mathbb{T}, \\
 &u(x,0) = u_{0}(x) \label{intro2}
\end{align}
where 
$$F = f \left(x+ \int\limits_{0}^{t} \|u(\tau)\|_{L^{2}}^{2}d\tau \right).$$
If we put $q(x,t) = u(x+ \int\limits_{0}^{t} \|u(\tau)\|_{L^{2}}^{2}d\tau, \hspace{.5mm} t),$ then $q$ satisfies Equations (\ref{intro1})-(\ref{intro2}).

 \par
 
We divide this paper into six sections. In Section $2$, we describe the preliminaries required for the present paper. Section $3$ descirbes the proof of trilinear estimate by using the Strichartz estimate for mKdV equation proved by J. Bourgain \cite{B}. Section $4$ contains \textit{a priori} estimates. We describe the proof of Theorem \ref{intro theorem} in Section $5.$ Finally in Section $6$, some multilinear estimates are proved. 


\section{Preliminaries}
In this section, we present the notations and definitions which are used throughout this article.
\subsection{Notations}
In this subsection, we list the notations which we use throughout this paper. $C,c$ are the various time independent constants which depend on $s$ unless specified. $a+$ and $a-$ represent $a+\epsilon$ and $a-\epsilon$, respectively for arbitrary small $\epsilon > 0.\hspace{1.5mm} A \lesssim B$ denotes the estimate of the form $A \leq CB$. Similarly, $A \sim B$ denotes $A \lesssim B$ and $B \gtrsim A.$ \par
Define $(dk)_{\lambda}$ to be normalized counting measure on $\mathbb{Z}/\lambda$:
$$\int \phi(k) (dk)_{\lambda} = \frac{1}{\lambda} \sum\limits_{k \in \mathbb{Z}/\lambda} \phi(k).$$
Let $\hat{f}(k)$ and $\tilde{f}(k,\tau)$ denotes the Fourier transform of $f(x,t)$ in $x$ and in $x$ and $t$, respectively.
We define the Sobolev space $H^{s}([0,\lambda])$ with the norm 
$$\|f\|_{H^{s}} = \|\hat{f}(k)\langle k \rangle^{s}\|_{L^{2}((dk)_{\lambda})},$$
where $\langle \cdot \rangle = (1 + |\cdot|).$ For details see \cite{CKSTT02},\cite{T}. We define the space $X^{s,b}$ embedded with the norm
$$\|u\|_{X^{s,b}} = \| \langle k \rangle^{s} \langle \tau - 4\pi^{2}k^{3} \rangle \tilde{u}(k,\tau)\|_{L^{2}((dk)_{\lambda}d\tau)}.$$
We often study the KdV and mKdV equation in $X^{s,\frac{1}{2}}$ space but it hardly contorls the norm $L^{\infty}_{t}H^{s}_{x}$ see \cite{B},\cite{CKSTT02},\cite{T}. To ensure the continuity of the solution, we define a slightly smaller space with the norm
$$\|u\|_{Y^{s}} = \|u\|_{X^{s,\frac{1}{2}}} + \|\langle k \rangle^{s}\tilde{u}(k,\tau)\|_{L^{2}((dk)_{\lambda})L^{1}(d\tau)}.$$
$Z^{s}$ space is defined via the norm
$$\|u\|_{Z^{s}} = \|u\|_{X^{s,-\frac{1}{2}}} + \| \langle k \rangle^{s} \langle \tau - 4\pi^{2}k^{3} \rangle^{-1} \tilde{u}(k,\tau)\|_{L^{2}((dk)_{\lambda})L^{1}(d\tau)}.$$
For the time interval $[t_{1},t_{2}],$ we define the restricted spaces $X^{s,b}$ and $Y^{s}$ embedded with the norms 
\begin{align*}
\|u\|_{X^{s,b}_{([0,\lambda] \times [t_{1},t_{2}])}} &= \inf \lbrace \|U\|_{X^{s,b}}  : U|_{([0,\lambda] \times [t_{1},t_{2}])} = u \rbrace, \\
\|u\|_{Y^{s}_{([0,\lambda] \times [t_{1},t_{2}])}} &= \inf \lbrace \|U\|_{Y^{s}}  : U|_{([0,\lambda] \times [t_{1},t_{2}])} = u \rbrace.
\end{align*}
We state the mean value theorem as follow:
\begin{lemma}
If $a$ is controlled by $b$ and $|k_{1}| \ll |k_{2}|,$ then 
$$a(k_{1} + k_{2}) - a(k_{2}) = O\left(|k_{1}|\frac{b(k_{2})}{|k_{2}|} \right).$$
\end{lemma}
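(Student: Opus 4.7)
The plan is to reduce this to the classical mean value theorem from calculus, using the hypothesis $|k_1| \ll |k_2|$ to replace the intermediate point by $k_2$ itself at the cost of harmless constants. Recall that in the $I$-method literature (following Colliander--Keel--Staffilani--Takaoka--Tao), saying that $a$ is controlled by a non-negative, slowly varying function $b$ means, among other things, that $a$ is continuously differentiable on the relevant region with derivative bound $|a'(k)| \lesssim b(k)/|k|$, and that $b(k) \sim b(k')$ whenever $|k|\sim|k'|$. I will take this as the working definition.

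First I would write
\begin{equation*}
a(k_1+k_2) - a(k_2) \;=\; \int_0^1 a'(k_2 + \theta k_1)\, k_1 \, d\theta,
\end{equation*}
which is just the fundamental theorem of calculus applied along the segment joining $k_2$ to $k_1 + k_2$. Next I would observe that since $|k_1| \ll |k_2|$, every point $\xi = k_2 + \theta k_1$ with $\theta \in [0,1]$ satisfies $|\xi| \sim |k_2|$. Then the control hypothesis gives $|a'(\xi)| \lesssim b(\xi)/|\xi|$, and the slow-variation of $b$ yields $b(\xi) \sim b(k_2)$, so $|a'(\xi)| \lesssim b(k_2)/|k_2|$ uniformly in $\theta$. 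Plugging this into the integral representation produces the desired bound $|a(k_1+k_2)-a(k_2)| \lesssim |k_1|\, b(k_2)/|k_2|$.

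There is no real obstacle here beyond making sure the notion of \emph{controlled by} actually supplies both the pointwise derivative bound and the slow-variation of $b$; once those are in hand the argument is essentially one line. If the convention adopted in this paper only defines "controlled by" in terms of pointwise size and a weak monotonicity, I would instead argue via a discrete difference quotient and the standard dyadic decomposition of $b$, but the conclusion is the same.
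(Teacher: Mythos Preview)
Your argument is correct and is precisely the standard one: apply the fundamental theorem of calculus along the segment from $k_2$ to $k_1+k_2$, use the derivative bound $|a'(\xi)|\lesssim b(\xi)/|\xi|$ from the definition of ``controlled by,'' and then invoke the slow variation $b(\xi)\sim b(k_2)$ since $|\xi|\sim|k_2|$ when $|k_1|\ll|k_2|$. The paper does not give its own proof of this lemma at all; it simply cites \cite[Section~4]{CKSTT02}, where exactly this argument appears, so your proposal matches the intended proof.
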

For details see \cite[Section 4]{CKSTT02}.
\subsection{Rescaling}
In this subsection, we rescale the mKdV equation. We can rewrite equations (\ref{intro1})-(\ref{intro2}) in $\lambda$-rescaled form as follow:
\begin{align}    \label{rescaled1}
&\partial_{t} v + \partial_{xxx} v + 6\left(v^{2} - \frac{1}{2\pi}\|v\|_{L^{2}}^{2} \right)\partial_{x}v + \lambda^{-3} \gamma v = \lambda^{-3} g, \\
&v(x,t_{0}) = v_{t_{0}}(x),   \label{rescaled2}
\end{align}
where 
\begin{align*}
g(x,t) &= \lambda^{-1}F(\lambda^{-1}x,\lambda^{-3}t), \\
v_{t_{0}}(x) &= \lambda^{-1}u(\lambda^{-1}x,\lambda^{-3}t_{0}),
\end{align*} 
for initial time $t_{0}.$ If $u$ is the solution of the equations (\ref{intro1})-(\ref{intro2}), then $v(x,t) = \lambda^{-1}u(\lambda^{-1}x,\lambda^{-3}t)$ is the solution of the equations (\ref{rescaled1})-(\ref{rescaled2}). Rescaling is helpful in proving the local in time result as well as \textit{a priori} estimate.
\subsection{I-Operator}
We define an operator $I$ which plays an important role for the $I$-method. Let $\phi : \mathbb{R} \rightarrow \mathbb{R}$ be a smooth monotone $\mathbb{R}$-valued function defined as:
\[ \phi(k) = 
\begin{cases}
1  &|k| < 1, \\
|k|^{s-1}  &|k| > 2. 
\end{cases}
\]
Then, for $m(k) = \phi(\frac{k}{N}),$ we define
\[ m(k) = 
\begin{cases}
1  &|k| < N, \\
|k|^{s-1}N^{1-s}  &|k| > 2N, 
\end{cases}
\]
where we fix $N$ to be a large cut-off. We define the operator $I$ as:
$$\widehat{Iu}(k) = m(k)\hat{u}(k).$$
We can rescale the operator $I$ as follow:
$$\widehat{I'u}(k) = m'(k)\hat{u}(k),$$
where $m'(\frac{k}{\lambda}) = m(k).$ Let $N' = \frac{N}{\lambda}.$ Then
\[ m'(k) = 
\begin{cases}
1  &|k| < N', \\
|k|^{s-1}N'^{(1-s)}  &|k| > 2N'. 
\end{cases}
\]
We use the rescaled $I$-operator for proving the local results for mKdV equation in time. Moreover, proving \textit{a priori} estimate also use the same operator.
\subsection{Strichartz Estimate}
Strichartz estimate plays an important role for the proof of the trilinear estimate. Bourgain in \cite{B}, proves the $L^{4}$ Strichartz estimate for mKdV equation. In the present article, we use the same estimate.
We list the following result:
\begin{prop}   \label{st1}
Let $b > \frac{1}{3}.$ Then, we have
$$\|u\|_{L^{4}(\mathbb{R} \times \mathbb{T})} \lesssim C \|u\|_{X^{0,b}}.$$
\end{prop}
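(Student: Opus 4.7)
The plan is to follow the classical Fourier-analytic argument of Bourgain for the $L^4$ Strichartz estimate associated with the Airy equation on the torus. Since the key identity $\|u\|_{L^4(\mathbb{R}\times\mathbb{T})}^{2}=\|u^{2}\|_{L^2(\mathbb{R}\times\mathbb{T})}$ lets us square the estimate, the goal becomes showing that $\|u^2\|_{L^2(\mathbb{R}\times\mathbb{T})}\lesssim \|u\|_{X^{0,b}}^{2}$ for $b>1/3$.

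First I would apply Plancherel, writing
$$\widetilde{u^{2}}(k,\tau)=\sum_{k_{1}\in\mathbb{Z}}\int_{\mathbb{R}} \tilde{u}(k_{1},\tau_{1})\tilde{u}(k-k_{1},\tau-\tau_{1})\,d\tau_{1}.$$
Setting $f(k,\tau)=\langle \tau-4\pi^{2}k^{3}\rangle^{b}|\tilde{u}(k,\tau)|$ so that $\|f\|_{L^{2}}=\|u\|_{X^{0,b}}$, and denoting the modulations by $\sigma=\tau-4\pi^{2}k^{3}$, $\sigma_{1}=\tau_{1}-4\pi^{2}k_{1}^{3}$, $\sigma_{2}=(\tau-\tau_{1})-4\pi^{2}(k-k_{1})^{3}$, I would apply Cauchy–Schwarz in $(k_{1},\tau_{1})$ to the convolution to get
$$\|u^{2}\|_{L^{2}}^{2}\leq \Bigl(\sup_{k,\tau}W(k,\tau)\Bigr)\|f\|_{L^{2}}^{4},\qquad W(k,\tau):=\sum_{k_{1}}\int_{\mathbb{R}}\frac{d\tau_{1}}{\langle\sigma_{1}\rangle^{2b}\langle\sigma_{2}\rangle^{2b}}.$$
The whole proof then reduces to showing $W(k,\tau)\lesssim 1$ uniformly.

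To bound $W$, the crucial input is the algebraic resonance identity for the Airy phase, namely $\sigma-\sigma_{1}-\sigma_{2}=12\pi^{2}kk_{1}(k-k_{1})$, which comes from $k_{1}^{3}+(k-k_{1})^{3}=k^{3}-3kk_{1}(k-k_{1})$. Performing the $\tau_{1}$-convolution by the standard one-dimensional estimate
$$\int_{\mathbb{R}}\langle\sigma_{1}\rangle^{-2b}\langle\sigma_{2}\rangle^{-2b}\,d\tau_{1}\lesssim \langle \sigma+12\pi^{2}kk_{1}(k-k_{1})\rangle^{1-4b}\quad (\tfrac14<b<\tfrac12)$$
reduces $W$ to a sum over $k_{1}\in\mathbb{Z}$ of weights depending only on the quadratic quantity $q(k_{1})=k_{1}(k-k_{1})=-(k_{1}-k/2)^{2}+k^{2}/4$. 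Since $q$ is a quadratic in $k_{1}$ with leading coefficient $1$, the number of $k_{1}\in\mathbb{Z}$ for which $|kk_{1}(k-k_{1})-\mu|\lesssim L$ is $O\bigl(1+(L/|k|)^{1/2}\bigr)$. Plugging this lattice count in and summing the dyadic pieces in $|kk_{1}(k-k_{1})|$ converges exactly when $2(2b-1)+\tfrac12>0$, that is $b>1/3$.

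The main obstacle is the last step: the threshold $b>1/3$ comes from a rather tight balance between the convolution exponent $4b-1$ and the square-root loss $|k|^{-1/2}$ produced by the quadratic lattice counting, so one must perform a careful dyadic decomposition in the modulations and in $|kk_{1}(k-k_{1})|$, treating separately the degenerate ranges $k=0$, $k_{1}=0$ and $k_{1}=k$ (where the resonance identity degenerates and $\sigma=\sigma_{1}+\sigma_{2}$ trivially holds). In these degenerate cases the $\tau_{1}$-integral alone suffices, and one sums the harmless geometric series in the dyadic modulation parameter. Combining the non-degenerate dyadic contributions with the degenerate ones yields $\sup_{k,\tau}W(k,\tau)\lesssim 1$, completing the proof.
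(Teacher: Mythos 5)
There is a genuine gap, and it sits exactly at the claimed threshold. Your reduction to $\sup_{k,\tau}W(k,\tau)\lesssim 1$, with the count $O\bigl(1+(L/|k|)^{1/2}\bigr)$ of frequencies $k_1$ at dyadic level $|\sigma+12\pi^2 kk_1(k-k_1)|\sim L$ and the weight $L^{1-4b}$ from the $\tau_1$-convolution, produces the dyadic sum $\sum_{j}2^{j(1-4b)}\bigl(1+2^{j/2}|k|^{-1/2}\bigr)$, which converges if and only if $3/2-4b<0$, i.e.\ $b>3/8$. Your own convergence condition $2(2b-1)+\tfrac12>0$ says precisely this, but you then assert it equals $b>1/3$; since $3/8>1/3$, that identification is an arithmetic error, not a harmless rounding. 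Worse, the failure is not an artifact of lossy summation: for $1/3<b\leq 3/8$ the weight itself is infinite, e.g.\ taking $k=1$ and $\sigma=O(1)$ gives
$$W\gtrsim \sum_{k_1}\langle k_1(1-k_1)\rangle^{1-4b}\sim\sum_{k_1}|k_1|^{2(1-4b)}=\infty \quad\text{for } b\leq 3/8,$$
so the statement ``the whole proof reduces to $W\lesssim1$ uniformly'' is false in part of the range $b>1/3$ you must cover. The pure Cauchy--Schwarz/sup-weight scheme therefore proves the estimate only for $b>3/8$; reaching $1/3$ is genuinely harder and is why the paper does not prove this proposition at all but quotes it from Bourgain \cite{B}, whose argument decomposes \emph{both} factors dyadically in modulation and exploits finer counting (including divisor-type bounds for $kk_1(k-k_1)=m$) with orthogonality between the pieces, rather than a pointwise bound on a single weight.

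A secondary, fixable point: your treatment of the degenerate case $k=0$ (``the $\tau_1$-integral alone suffices'') does not work as stated, because after the $\tau_1$-integration every $k_1$ contributes the same quantity $\langle\tau\rangle^{1-4b}$ and the $k_1$-sum has infinitely many terms. The standard repair is to avoid the sup there: for the zero output mode write $P_0(u^2)=\sum_{k_1}u_{k_1}u_{-k_1}$, use the one-dimensional Sobolev embedding $H^{1/4}_t\hookrightarrow L^4_t$ on each fixed spatial mode (modulating away the phase $4\pi^2k_1^3t$), and then apply Cauchy--Schwarz in $k_1$, which costs only $b\geq 1/4$. With that repair and the corrected threshold, what you have written is a valid proof of the proposition for $b>3/8$, but it does not establish the stated range $b>1/3$.
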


%
\subsection{Local-Wellposedness}
In this subsection, we state the local result in time which can be proved by using the contraction mapping. Let $\eta(t) \in C_{0}^{\infty}$ be a cut-off function such that:
\begin{equation*}   \label{lw:4}
\eta(t) =
\begin{cases}
1 & \text{if}\ |t| \leq 1, \\
0 & \text{if}\ |t| >2. 
\end{cases}
\end{equation*}
Suppose that
 $$D_{\lambda}(t)f(x) =\int e^{2i\pi k x}e^{-(2i\pi k)^{3}t} \hat{f}(k)(dk)_{\lambda}.$$
We assume the following well known lemmas:
\begin{lemma}   \label{lw : lemma1}
$$\| \eta (t) D_{\lambda}(t)w\|_{X^{1,\frac{1}{2}}} \leq \|w\|_{H^{1}}.$$
\end{lemma}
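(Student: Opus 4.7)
The plan is to reduce everything to a direct spacetime Fourier calculation. Since $D_{\lambda}(t)w$ is the free Airy evolution of $w$, its spatial Fourier transform is $e^{i\omega(k)t}\hat{w}(k)$, where $\omega(k)$ is exactly the dispersion relation appearing in the $X^{s,b}$ weight, so that $\tau-\omega(k)=\tau-4\pi^{2}k^{3}$ in the paper's convention. Multiplying by $\eta(t)$ corresponds, under Fourier transform in $t$, to convolution with $\hat{\eta}$, but because the spatial Fourier coefficient factors out, this just produces
$$\widetilde{\eta\, D_{\lambda}w}(k,\tau)=\hat{\eta}\bigl(\tau-4\pi^{2}k^{3}\bigr)\,\hat{w}(k).$$

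Next I would substitute this identity into the definition of the $X^{1,1/2}$ norm and perform the change of variables $\sigma=\tau-4\pi^{2}k^{3}$ (whose Jacobian is one for fixed $k$), which separates the $k$ and $\tau$ integrals:
$$\|\eta\, D_{\lambda}w\|_{X^{1,1/2}}^{2}=\Bigl(\int\langle\sigma\rangle\,|\hat{\eta}(\sigma)|^{2}\,d\sigma\Bigr)\cdot\int\langle k\rangle^{2}\,|\hat{w}(k)|^{2}\,(dk)_{\lambda}.$$
Because $\eta\in C_{0}^{\infty}(\mathbb{R})$, its Fourier transform $\hat{\eta}$ is Schwartz, so the $\sigma$-integral is a finite absolute constant depending only on the choice of cutoff. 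The remaining factor is precisely $\|w\|_{H^{1}}^{2}$, which yields the claimed inequality (the constant $\int\langle\sigma\rangle|\hat{\eta}(\sigma)|^{2}d\sigma$ being absorbed into the $\leq$, as is standard for these cutoff lemmas).

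The only step requiring care is bookkeeping: one must check that the phase in the definition of $D_{\lambda}$ coincides with the weight $4\pi^{2}k^{3}$ used in defining $X^{s,b}$, so that the factor $\hat{\eta}(\tau-4\pi^{2}k^{3})$ appears exactly — not a shifted version. Once this matching of conventions is verified, the proof is essentially a one-line Plancherel plus Fubini argument, and there is no genuine analytic obstacle. The same calculation will be used (with obvious modifications) for the $Y^{s}$-norm companion of this estimate later in the paper.
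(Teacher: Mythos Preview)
Your argument is correct and is precisely the standard proof of this estimate; the paper itself does not supply a proof but simply refers the reader to \cite{CKSTT02}, where the same Plancherel computation you outline is carried out. The only caveat you already flag---matching the phase in $D_{\lambda}(t)$ with the weight $\tau-4\pi^{2}k^{3}$ in the definition of $X^{s,b}$---is a matter of normalization conventions (the paper is not entirely consistent here), and it does not affect the validity of the argument.
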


\begin{lemma}   \label{lw : lemma2}
Let $F \in {X^{1,-\frac{1}{2}}}.$ Then  
$$\| \eta (t)\int_{0}^{t} D_{\lambda}(t-t')F(t')dt'\|_{Y^{1}} \leq \|F\|_{Z^{1}}.$$
\end{lemma}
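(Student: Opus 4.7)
My plan is to pass to the space-time Fourier side, where the Duhamel integral becomes a kernel operator whose singularity is governed by the modulation variable $\sigma' = \tau' - \omega(k)$ with $\omega(k) = 4\pi^2 k^3$. Setting $w := \eta(t)\int_0^t D_\lambda(t-t')F(t')\,dt'$ and computing directly from $\hat F(k,t') = \int e^{it'\tau'}\tilde F(k,\tau')\,d\tau'$,
\begin{equation*}
\tilde w(k,\tau) = \int \tilde F(k,\tau')\,\frac{\hat\eta(\tau-\tau') - \hat\eta(\tau-\omega(k))}{i(\tau'-\omega(k))}\,d\tau'.
\end{equation*}
Decompose the $\tau'$-integral according to the size of $\sigma'$, writing $w = w_{\rm hi} + w_{\rm lo}$ for the contributions from $|\sigma'| \geq 1$ and $|\sigma'| < 1$, respectively. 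The goal is to control each of the two summands in $\|w\|_{Y^1}$ by one of the two summands in $\|F\|_{Z^1}$.

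For $w_{\rm hi}$, split the numerator into its two terms. The piece from $\hat\eta(\tau-\tau')$ yields a $\tau$-convolution $(\hat\eta *_\tau G_k)(\tau)$ with $G_k := \chi_{|\sigma'|\geq 1}\tilde F(k,\cdot)/(i\sigma')$. The Schwartz decay of $\hat\eta$, combined with the pointwise bound $\langle\sigma\rangle^{1/2} \leq \langle\sigma - \sigma'\rangle^{1/2} \langle\sigma'\rangle^{1/2}$, lets weighted Young estimates transfer the $\langle\sigma\rangle^{1/2}$ weight off the convolution and onto the integrand $\langle\sigma'\rangle^{-1/2}\tilde F$, so that this contribution is bounded in both $X^{1,1/2}$ and $L^2_{(dk)_\lambda} L^1_{d\tau}$ by $\|F\|_{X^{1,-1/2}}$. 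The piece from $-\hat\eta(\tau-\omega(k))$ factors as $\hat\eta(\sigma) \cdot c(k)$ with $c(k) := \int_{|\sigma'|\geq 1}\tilde F(k,\tau')/(i\sigma')\,d\tau'$; since $\hat\eta(\sigma)$ is Schwartz in $\sigma$, both $Y^1$-summands reduce to $\|\langle k\rangle c(k)\|_{L^2_{(dk)_\lambda}}$, which is controlled by the $L^2_k L^1_\tau$ summand of $\|F\|_{Z^1}$ because $1/|\sigma'| \lesssim \langle\sigma'\rangle^{-1}$ on $|\sigma'|\geq 1$.

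For $w_{\rm lo}$, the numerator admits the representation $\hat\eta(\sigma-\sigma') - \hat\eta(\sigma) = -\sigma' \int_0^1 \hat\eta'(\sigma - \theta\sigma')\,d\theta$, so after dividing by $i\sigma'$ the kernel becomes a Schwartz function of $\sigma$, uniformly in $|\sigma'| < 1$. Hence $|\tilde w_{\rm lo}(k,\tau)| \lesssim \langle\sigma\rangle^{-N} \int_{|\sigma'|<1} |\tilde F(k,\tau')|\,d\tau'$ for every $N$, and since $\langle\sigma'\rangle \leq 2$ on the support of the integrand, the last factor is $\lesssim \|\langle\sigma'\rangle^{-1}\tilde F(k,\cdot)\|_{L^1_{d\tau'}}$. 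Both $Y^1$-summands of $w_{\rm lo}$ therefore reduce to $\|\langle k\rangle\langle\sigma'\rangle^{-1}\tilde F\|_{L^2_{(dk)_\lambda} L^1_{d\tau'}}$, the second summand of $\|F\|_{Z^1}$.

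The main obstacle is the endpoint behavior at $b = \pm 1/2$: $X^{s,b}$ estimates alone fail to close there, which is exactly why $Y^s$ and $Z^s$ augment the usual Bourgain norms with summands in $L^2_{(dk)_\lambda} L^1_{d\tau}$. The principal technical burden is the bookkeeping of matching each modulation regime and each auxiliary summand of $\|F\|_{Z^1}$ with the correct summand of $\|w\|_{Y^1}$, and using the Schwartz nature of $\hat\eta$ to shuttle the $\langle\sigma\rangle^{1/2}$ weight across the $\tau$-convolution in the high-modulation regime without any derivative loss in $k$.
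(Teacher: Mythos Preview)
Your argument is correct and is precisely the standard Fourier-side proof of the inhomogeneous linear estimate; the paper itself does not supply an argument but simply refers to \cite{CKSTT02}, where this computation is carried out in the same way (compute $\tilde w$, split at $|\sigma'|\sim 1$, use the mean value theorem for the low-modulation piece and the Schwartz decay of $\hat\eta$ together with weighted Young inequalities for the high-modulation piece). There is nothing to add.
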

For the proof of Lemmas \ref{lw : lemma1} and \ref{lw : lemma2} see \cite{CKSTT02}.
\begin{prop}    \label{local wellposdeness}
Let $\frac{1}{2} \leq s < 1.$ Then the IVP (\ref{rescaled1})-(\ref{rescaled2}) is locally well-posed for the initial data $v_{t_{0}}$ satisfying $I'v_{t_{0}} \in \dot H^{1}(\mathbb{T})$ and $I'g \in \dot H^{1}(\mathbb{T}).$ Moreover, there exists a unique solution on the time interval $[t_{0},t_{0} + \delta]$ with the lifespan $\delta \sim (\|I'v_{t_{0}}\|_{H^{1}} + \lambda^{-3}\|I'g\|_{H^{1}} + \gamma \lambda^{-3})^{-\alpha}$ for some $\alpha > 0$ and the solution satisfies
\begin{align*}
\|I'v\|_{Y^{1}} &\lesssim \|I'v_{t_{0}}\|_{H^{1}} + \lambda^{-3}\|I'g\|_{H^{1}}, \\
\sup\limits_{t_{0} \leq t \leq t_{0} + \delta}^{}\|I'v(t)\|_{H^{1}} &\lesssim \|I'v_{t_{0}}\|_{H^{1}} + \lambda^{-3}\|I'g\|_{H^{1}}. 
\end{align*}  
\end{prop}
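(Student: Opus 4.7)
The plan is to prove this by a standard contraction mapping argument applied to the Duhamel formulation of the equation obtained after applying the rescaled $I$-operator. Without loss of generality I translate so that $t_{0}=0$. Applying $I'$ to (\ref{rescaled1}) yields
\begin{align*}
\partial_{t}(I'v)+\partial_{xxx}(I'v) = -6\, I'\!\left[\left(v^{2}-\tfrac{1}{2\pi}\|v\|_{L^{2}}^{2}\right)\partial_{x}v\right] -\lambda^{-3}\gamma I'v +\lambda^{-3} I'g,
\end{align*}
whose Duhamel form reads, for $|t|\leq 2$ and after multiplying by the cut-off $\eta(t/\delta)$,
\begin{align*}
I'v(t)=\eta(t/\delta)D_{\lambda}(t)I'v_{0} +\eta(t/\delta)\!\int_{0}^{t}\! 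D_{\lambda}(t-t')\Bigl[-6 I'\!\bigl((v^{2}-\tfrac{1}{2\pi}\|v\|_{L^{2}}^{2})\partial_{x}v\bigr)-\lambda^{-3}\gamma I'v+\lambda^{-3}I'g\Bigr]dt'.
\end{align*}
I then define the map $\Phi$ whose right-hand side is the above expression and look for a fixed point in the ball $B_{R}=\{w:\|w\|_{Y^{1}}\le R\}$ for $R \sim \|I'v_{0}\|_{H^{1}}+\lambda^{-3}\|I'g\|_{H^{1}}$.

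The linear part is controlled by Lemma~\ref{lw : lemma1}, giving $\|\eta(t/\delta)D_{\lambda}(t)I'v_{0}\|_{Y^{1}}\lesssim \|I'v_{0}\|_{H^{1}}$. The Duhamel part is controlled by Lemma~\ref{lw : lemma2}, which reduces matters to estimating the forcing in $Z^{1}$. For the linear damping and external forcing I use that the time cut-off produces a factor $\delta^{\theta}$ in $Z^{1}$ for some $\theta>0$, so
\begin{align*}
\|\eta(t/\delta)\lambda^{-3}\gamma I'v\|_{Z^{1}}\lesssim \delta^{\theta}\lambda^{-3}\gamma\|I'v\|_{Y^{1}},\qquad \|\eta(t/\delta)\lambda^{-3}I'g\|_{Z^{1}}\lesssim \delta^{\theta}\lambda^{-3}\|I'g\|_{H^{1}}.
\end{align*}
The trilinear contribution is the core analytic estimate: the result of Section~3 (Proposition~\ref{TL Main result} in particular) gives, after time-localization,
\begin{align*}
\bigl\|\eta(t/\delta)I'\!\bigl((v^{2}-\tfrac{1}{2\pi}\|v\|_{L^{2}}^{2})\partial_{x}v\bigr)\bigr\|_{Z^{1}}\lesssim \delta^{\theta}\|I'v\|_{Y^{1}}^{3}.
\end{align*}

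Combining these ingredients yields $\|\Phi(w)\|_{Y^{1}}\le C(\|I'v_{0}\|_{H^{1}}+\lambda^{-3}\|I'g\|_{H^{1}}) +C\delta^{\theta}(R^{3}+\lambda^{-3}\gamma R+\lambda^{-3}\|I'g\|_{H^{1}})$, and a similar Lipschitz estimate for differences $\Phi(w_{1})-\Phi(w_{2})$ with the cubic bound replaced by $(R^{2})$ times the difference. Choosing
\begin{align*}
\delta\sim (\|I'v_{0}\|_{H^{1}}+\lambda^{-3}\|I'g\|_{H^{1}}+\gamma\lambda^{-3})^{-\alpha},\qquad \alpha=1/\theta\ \text{(roughly)},
\end{align*}
makes $\Phi$ a contraction on $B_{R}$, giving existence, uniqueness and the desired $Y^{1}$-bound. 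The $\sup_{t}\|I'v(t)\|_{H^{1}}$ bound then follows from the embedding $Y^{1}\hookrightarrow C_{t}H^{1}_{x}$ (which is precisely why the $Y^{s}$ norm includes the $L^{2}_{k}L^{1}_{\tau}$ piece), applied to the fixed point on $[0,\delta]$.

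The main obstacle is managing the trilinear term: one must verify that the trilinear estimate of Section~3, whose constants may depend on $\lambda$ through $N'=N/\lambda$, gives a genuinely contractive factor after time-localization, and that the resonant portion (where $v^{2}-\frac{1}{2\pi}\|v\|_{L^{2}}^{2}$ plays its role) is handled so that no positive power of $N$ or $\lambda$ remains on the right-hand side. Once that is in hand, the rest of the proof is the routine Picard contraction in $Y^{1}$.
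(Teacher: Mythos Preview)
Your argument overlooks the difficulty that the paper singles out as the only nontrivial point: the forcing $g$ is \emph{not} a fixed datum but depends on the unknown $v$ through the phase shift,
\[
g(x,t)=\lambda^{-1}f\Bigl(\lambda^{-1}x+\tfrac{1}{2\pi}\int_{0}^{t}\|v(\tau)\|_{L^{2}}^{2}\,d\tau\Bigr).
\]
In your Lipschitz estimate for $\Phi(w_{1})-\Phi(w_{2})$ you let the $g$-term cancel, but it does not: one must bound $I'g[v_{1}]-I'g[v_{2}]$ in terms of the difference of the solutions. Treating $g$ as fixed is exactly the step that fails here, and it is the reason the damped--forced mKdV cannot be handled by simply copying the KdV argument of \cite{T}.

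The paper's fix has two parts. First, it does \emph{not} contract in $Y^{1}$; it takes a ball in $X^{1,1/2}$ but equips it with the weaker metric $d(w,w')=\|w-w'\|_{X^{0,1/2}}+\|v-v'\|_{X^{0,1/2}}$, and runs the fixed-point argument on the \emph{pair} of equations \eqref{local equation 2}--\eqref{Local equation 1} for $v$ and $w=I'v$ simultaneously (the ball is weakly closed, hence complete for $d$). Second, the forcing difference is controlled by the mean value theorem in the translation variable together with translation invariance of the norm, giving
\[
\|I'g-I'g'\|_{L^{2}}\lesssim \|I'g\|_{H^{1}}\,\|v-v'\|_{X^{0,1/2}}.
\]
This is why the metric must carry the $v$-difference and why $\|I'g\|_{H^{1}}$ appears in the lifespan. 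Your identification of the trilinear term as ``the main obstacle'' is misplaced: that part is handled by Proposition~\ref{TL for L^{1}} (and the multilinear machinery of Section~6 for the $I'$-commutator) and the paper treats it as routine; the genuine new ingredient is the $v$-dependent phase in $g$.
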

\begin{rem}
Note that 
\begin{align*}
g(x,t) &= \lambda^{-1}F(\lambda^{-1}x,\lambda^{-3}t) \\
&= \lambda^{-1}f\left(x + \frac{1}{2\pi}\int\limits_{0}^{t} \|I'v\|_{L^{2}}^{2} \right)
\end{align*}
\end{rem}
\begin{proof}

The proof of the Proposition \ref{local wellposdeness} follows along the same lines as for KdV equation given in \cite{T} with the help of trilinear estimate given in Proposition \ref{TL for L^{1}}. The only difference arises in the estimate of $g$ as it depends on unknown $u$. To deal with this issue, we define a new metric. Indeed, let 
$$B= \lbrace w\in X^{1,\frac{1}{2}} : \|w\|_{X^{1,\frac{1}{2}}} \lesssim C\left( \|I'v_{0}\|_{H^{1}} + \lambda^{-3}\|I'g\|_{H^{1}} \right)	 \rbrace$$
and define the metric 
$$d(w,w') = \|w-w'\|_{X^{0,\frac{1}{2}}} + \|v-v'\|_{X^{0,\frac{1}{2}}},$$
for $I'v = w.$ As ${X^{0,\frac{1}{2}}}$ is reflexive, the ball $B$ is complete with respect to the metric $d$ for details see \cite[9.14 and Lemma 7.3]{Kato} . Therefore, it is enough to show 
\begin{align*}
\|N(v,w) - N(v',w')\|_{Y^{0}} &\lesssim \|\eta(t)(P(v,w)-P(v',w'))\|_{Z^{0}} \\
&\lesssim \left(\gamma \lambda^{-3} + \lambda^{0+}\left( \|I'v_{0}\|_{H^{1}} + \lambda^{-3}\|I'g\|_{H^{1}} \right)^{2} +\lambda^{-3}\|I'g\|_{H^{1}} \right) \\
&\left( \|w-w'\|_{X^{0,\frac{1}{2}}} + \|v-v'\|_{X^{0,\frac{1}{2}}} \right),
\end{align*}
where 
$$N(w)= \eta(t)D_{\lambda}(t)I'v_{0} - \eta(t)\int D_{\lambda}(t-t')\eta(t')P(t')dt'$$
with
$$P(v,w) = 6I'\left(v^{2} - \frac{1}{2\pi}\|v\|^{2}_{L^{2}}\right)\partial_{x}v + \gamma \lambda^{-3}w - \lambda^{-3}I'g.$$

As the metric consist of both $w$ and $u$ terms, we consider the pair of equation as:
\begin{align}
&\partial_{t} v + \partial_{xxx} v + 6\left(v^{2} - \frac{1}{2\pi}\|v\|_{L^{2}}^{2} \right)\partial_{x}v + \lambda^{-3} \gamma v = \lambda^{-3} g, \label{local equation 2} \\
&\partial_{t} w + \partial_{xxx} w + 6I'\left(v^{2} - \frac{1}{2\pi}\|v\|_{L^{2}}^{2} \right)\partial_{x}(I')^{-1}w + \lambda^{-3} \gamma w = \lambda^{-3} I'g.  \label{Local equation 1}
\end{align}
The estimate of $v$ in $H^{s}$ follows from that of $w$ in $H^{1}$ because $\|v\|_{H^{s}} \lesssim \|w\|_{H^{1}} .$ Therefore, we do not need to assume extra condition on ball for the variable $``v"$ .Let
\begin{align*}
g'(x,t) &= \lambda^{-1}F(\lambda^{-1}x,\lambda^{-3}t) \\
&= \lambda^{-1}f\left(x + \frac{1}{2\pi}\int\limits_{0}^{t} \|I'v'\|_{L^{2}}^{2} \right)
\end{align*}
At first, we consider the external forcing term for Equation \eqref{local equation 2} as:

\begin{equation*}
\begin{split}
&\|I'g - I'g'\|_{X^{0,-\frac{1}{2}}} \lesssim \|I'g-I'g'\|_{L^{2}} \\
=&\Biggl|\Biggl|\lambda^{-1}I'f\left(\lambda^{-1}x + \int_{0}^{\lambda^{-3}t}\|\lambda v(\lambda \cdot,\lambda^{3}\tau )\|_{L^{2}}^{2}d\tau \right) -\\
& \lambda^{-1}I'f\left(\lambda^{-1}x + \int_{0}^{\lambda^{-3}t}\|\lambda v'(\lambda \cdot,\lambda^{3}\tau )\|_{L^{2}}^{2}d\tau \right) \Biggl|\Biggl|_{L^{2}} \\
&\lesssim \left\|\lambda^{-1}\int_{0}^{1}\frac{d}{d\theta}I'f(\lambda^{-1}x + \theta\alpha(t) + (1-\theta)\beta(t))d\theta\right\|_{L^{2}}
\end{split}
\end{equation*}
where
$$\alpha(t) = \int_{0}^{\lambda^{-3}t}\|\lambda v(\lambda \cdot,\lambda^{3}\tau )\|_{L^{2}}^{2}d\tau \hspace{6mm}\text{and}\hspace{6mm} \beta(t) = \int_{0}^{\lambda^{-3}t}\|\lambda v'(\lambda \cdot,\lambda^{3}\tau )\|_{L^{2}}^{2}d\tau.$$
Now from mean value theorem and the fact that translation is invariant, we get
\begin{align*}
\|I'g-I'g'\|_{L^{2}} \lesssim \|I'g\|_{H^{1}}\|v-v'\|_{X^{0,\frac{1}{2}}}.
\end{align*}
Similarly for Equation \eqref{Local equation 1}, we get
\begin{align*}
\|g-g'\|_{L^{2}} \lesssim \|g\|_{H^{1}}\|v-v'\|_{X^{0,\frac{1}{2}}}.
\end{align*}
The nonlinear term can be estimated similar to the $4$-linear estimate of Lemma \ref{Energy 1 estimate}. Note that the $4$-linear estimate has third order derivative on the other hand the nonlinear term has only one. We can make the similar cases for the nonlinear term as given in Integrals $(1)-(3)$ and prove the estimate. Hence, we can use the contraction principle. This shows that the solution $u \in X^{1,\frac{1}{2}}.$ We need to show that the solution belongs to $Y^{1}.$ But from Proposition \ref{TL for L^{1}}, the nonlinear term of the integral equation belongs to $Y^{1}.$ In the same way, we can verify other two terms of integral equation by using Schwarz inequality. Therefore, the solution $u \in Y^{1}.
$
\end{proof}

\section{Trilinear Estimate}
Define an operator $J$ such that 
\begin{align}   \label{operator J}
\hat{J}[u,v,w] = i\frac{k}{3} \sum\limits_{\substack{k_{1} +k_{2} + k_{3} = k \\(k_{1} + k_{2})(k_{2} +k_{3})(k_{3} + k_{1})\neq 0 }}\hat{u}(k_{1})\hat{v}(k_{2})\hat{w}(k_{3}) -i k\hat{u}(k)\hat{v}(k)\hat{w}(-k).
\end{align}
where $\hat{u} \hspace{1.5mm} \text{and} \hspace{1.5mm} \tilde{v}$ denote the Fourier transforms in $x$ variable and both $x \hspace{1mm} \text{and}\hspace{1mm} t$ variables, respectively. We establish the following trilinear estimate for $J$:
\begin{prop}     \label{TL Main result}
Let $s \geq \frac{1}{2} \hspace{1mm}\text{and}\hspace{1mm}u,v,w \in {X^{s,\frac{1}{2}}}$ are $\lambda$-periodic in $x$ variable. Then, we have 
\begin{equation} \label{TL1}
\|J[u,v,w]\|_{X^{s,-\frac{1}{2}}} \leq C\lambda^{0+}\|u\|_{X^{s,\frac{1}{2}}}\|v\|_{X^{s,\frac{1}{2}}}\|w\|_{X^{s,\frac{1}{2}}}.
\end{equation}
\end{prop}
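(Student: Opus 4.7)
The plan is to prove this by duality and an $X^{s,b}$-type case analysis using the cubic resonance identity for KdV-type phase functions, together with Bourgain's $L^{4}$-Strichartz estimate (Proposition \ref{st1}). Write $J = J_{\mathrm{nr}} + J_{\mathrm{diag}}$ where $J_{\mathrm{nr}}$ is the first (non-resonant) sum and $\widehat{J_{\mathrm{diag}}}(k) = -ik\,\hat u(k)\hat v(k)\hat w(-k)$. Dualizing against a test function $h\in X^{-s,1/2}$ and applying Plancherel recasts the $J_{\mathrm{nr}}$-bound as estimating a four-linear Fourier integral
\[
\sum_{\substack{k_1+k_2+k_3=k\\ (k_1+k_2)(k_2+k_3)(k_3+k_1)\neq 0}} \int \frac{|k|\,U_1 V_2 W_3 H}{\langle k\rangle^{s}\langle k_1\rangle^{s}\langle k_2\rangle^{s}\langle k_3\rangle^{s}\langle\sigma\rangle^{1/2}\langle\sigma_1\rangle^{1/2}\langle\sigma_2\rangle^{1/2}\langle\sigma_3\rangle^{1/2}}
\]
by $C\lambda^{0+}\|U_1\|_{L^{2}}\|V_2\|_{L^{2}}\|W_3\|_{L^{2}}\|H\|_{L^{2}}$, where $\sigma=\tau-4\pi^{2}k^{3}$, $\sigma_j=\tau_j-4\pi^{2}k_j^{3}$ and the capitalized functions are the normalized non-negative versions (e.g.\ $U_1(k,\tau)=\langle k\rangle^{s}\langle\sigma_1\rangle^{1/2}|\tilde u(k,\tau)|$).

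The central algebraic input is
\[
\sigma - \sigma_1 - \sigma_2 - \sigma_3 \;=\; 12\pi^{2}(k_1+k_2)(k_2+k_3)(k_3+k_1),
\]
valid under $k_1+k_2+k_3=k$ and $\tau_1+\tau_2+\tau_3=\tau$, which on the non-resonant set forces $\max(|\sigma|,|\sigma_j|)\gtrsim |(k_1+k_2)(k_2+k_3)(k_3+k_1)|$. I would split into four cases according to which of the four modulations is largest; in each case, placing that factor in $L^{2}_{k,\tau}$, applying Proposition \ref{st1} to bound two of the remaining factors in $L^{4}_{t,x}$, and using Plancherel/$L^{2}_{t,x}$ for the last factor, a Cauchy--Schwarz in the frequency sums closes the estimate once $s\geq 1/2$: the pointwise factor $|k|/(\langle k\rangle^{s}\langle k_1\rangle^{s}\langle k_2\rangle^{s}\langle k_3\rangle^{s})$ is absorbed by the modulation gain from the resonance identity combined with the symmetric bound $|k|\lesssim\max|k_j|$.

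The principal obstacles I anticipate are twofold. First, the diagonal term $J_{\mathrm{diag}}$ is genuinely resonant: at $k_1=k_2=k$, $k_3=-k$ one has $\sigma = \sigma_1+\sigma_2+\sigma_3$ identically, so no phase gain is available, and $J_{\mathrm{diag}}$ must be bounded directly by Young's inequality in the $\tau$-variable together with the $\langle\sigma\rangle^{-1/2}$ weight on the output, exploiting that all inputs sit at a single wavenumber so the summation in $k$ reduces to a single-variable estimate where $|k|$ is tamed by the product $\langle k\rangle^{-s}\prod_j \langle k_j\rangle^{-s}$. Second, extracting the $\lambda^{0+}$ factor requires careful treatment of the ``almost-resonant'' region where $|(k_1+k_2)(k_2+k_3)(k_3+k_1)|$ can be as small as $\lambda^{-3}$ on the lattice $\mathbb{Z}/\lambda$; here the gain from the modulation weights must be balanced against the near-degeneracy of the summation in pairwise-sum variables, with a logarithmic dyadic sum producing the final $\lambda^{0+}$ loss.
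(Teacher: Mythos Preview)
Your high-level plan---dualize, invoke the cubic resonance identity, split by the largest modulation, and use the $L^4$ Strichartz estimate---matches the paper's. The gap is in the H\"older bookkeeping for the non-resonant part. As you state it (max-modulation factor in $L^2_{k,\tau}$, two of the remaining three in $L^4_{t,x}$, the last in $L^2_{t,x}$), the exponents sum to $3/2$, not $1$, so the four-linear integral does not close. The paper fixes this by a further case split on the frequency interaction. When all four $|k_j|$ are comparable (Case~1a), it puts \emph{all four} factors into $L^4$, using the dual Strichartz embedding $L^{4/3}_{t,x}\hookrightarrow X^{0,-1/2}$ on the output side. When $|k_1|\sim|k_4|\gg|k_2|,|k_3|$ (Case~1b), it uses $\langle\sigma_4\rangle\gtrsim|k_2+k_3|\,|k_1|^2$ and \emph{bundles} $u_2 u_3$ into a single $L^4$ factor carrying the weight $|k_2+k_3|^{-1/2}$; a Cauchy--Schwarz sum over the range $1/\lambda\le|k_2+k_3|\le 1$ then produces the $(1+\log\lambda)^{1/2}\lesssim\lambda^{0+}$ loss. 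This bundling of the two low-frequency factors---not the full triple product being $\sim\lambda^{-3}$---is the precise mechanism behind the $\lambda^{0+}$, and it is the step your sketch is missing.

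For $J_{\mathrm{diag}}$, your Young's-in-$\tau$ proposal can be made to work, but not literally: plain Young's forces an $L^1_\tau$ bound on some input, which $b=1/2$ does not provide. One correct route is the paper's: since $|k_1|=|k_2|=|k_3|=|k|$, the diagonal term is a special instance of Case~1a and the same $L^4_{x,t}$ Strichartz argument applies. An alternative closer to what you wrote combines the output weight $\langle\sigma\rangle^{-1/2}$ with the one-dimensional embedding $H^{1/4}_t\hookrightarrow L^4_t$ at each fixed $k$, and then closes the $k$-sum via $\ell^2_k\hookrightarrow\ell^\infty_k$; your observation that $|k|\langle k\rangle^{s}\langle k\rangle^{-3s}\lesssim 1$ for $s\ge 1/2$ is exactly what makes this sum converge.
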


\begin{rem}
We note that if $u$ is real valued, then 
\begin{equation} \label{TL2}
J[u,u,u] = \left(u^{2} - \frac{1}{2\pi}\|u\|^{2}_{L^{2}}\right)\partial_{x}u.
\end{equation}
yields the nonlinearity of mKdV. The first term and the second term of (\ref{operator J}) can be estimated in $H^{s}$ for $s\geq \frac{1}{4}$ and $s \geq \frac{1}{2}$, respectively. So, the bound $s=\frac{1}{2}$ comes from the second term. 
\end{rem}
Simple computations yield
\begin{align*}
\left(u^{2} - \frac{1}{2\pi}\|u\|^{2}_{L^{2}}\right)\partial_{x}u =& i \sum\limits_{\substack{k_{1} +k_{2} + k_{3} = k \\(k_{1} + k_{2})\neq 0 }} \hat{u}(k_{1})\hat{u}(k_{2})k_{3} \hat{u}(k_{3}) \\
=& i \lbrace  \sum\limits_{\substack{k_{1} +k_{2} + k_{3} = k \\(k_{1} + k_{2})(k_{2} +k_{3})(k_{3} + k_{1})\neq 0 }}\hat{u}(k_{1})\hat{u}(k_{2}) k_{3}\hat{u}(k_{3}) \\
&+ \sum\limits_{\substack{k_{1} +k_{2} + k_{3} = k \\(k_{1} + k_{2})(k_{3} + k_{1})\neq 0 \\ (k_{2} +k_{3}) = 0 }}\hat{u}(k_{1})\hat{u}(-k_{3}) k _{3}\hat{u}(k_{3}) \\
&+ \sum\limits_{\substack{k_{1} +k_{2} + k_{3} = k \\(k_{1} + k_{2})(k_{2} +k_{3})\neq 0 \\(k_{3} + k_{1}) =0 }}\hat{u}(-k_{3})\hat{u}(k_{2}) k_{3}\hat{u}(k_{3})  \\
&+ \sum\limits_{\substack{k_{1} +k_{2} + k_{3} = k \\(k_{1} + k_{2})\neq 0 \\ (k_{2} +k_{3}) = (k_{3} + k_{1}) = 0}} k_{3} \hat{u}(k_{1})\hat{u}(-k_{3})^{2} \rbrace \\
=& i\frac{k}{3} \{ \sum\limits_{\substack{k_{1} +k_{2} + k_{3} = k \\(k_{1} + k_{2})\neq 0 }} \hat{u}(k_{1})\hat{u}(k_{2}) \hat{u}(k_{3}) \} \\
&- ik|\hat{u}(k)|^{2} \hat{u}(k).
\end{align*}
\begin{rem}
Note that the right hand side of the above formula is equivalent to $\hat{J}.$ Therefore, the nonlinearity of mKdV equation can be control if we prove the Proposition \ref{TL Main result}
.
\end{rem}
\begin{rem}
If $u$ is a complex-valued function, then we have only to consider 
$$\left(|u^{2}| - \frac{1}{2\pi}\|u\|^{2}_{L^{2}}\right)\partial_{x}u - \frac{i}{2\pi}Im\langle \partial_{x}u, u \rangle_{L^{2}}u$$
instead of the left hand side of the above equality. This yield the nonlinearity of the complex mKdV. 
\end{rem}
\begin{proof} [Proof of Proposition \ref{TL Main result}]
We first consider the trilinear estimate corresponding to non resonant frequencies. We claim that 
$$\left\| i \frac{k}{3} \int\limits_{\substack{k_{1} +k_{2} + k_{3} = k \\(k_{1} + k_{2})(k_{2} +k_{3})(k_{3} + k_{1})\neq 0 }}\hat{u}_{1}(k_{1})\hat{u}_{2}(k_{2})\hat{u}_{3}(k_{3})\right\|_{X^{s,-\frac{1}{2}}} \lesssim \prod\limits_{i=1}^{3} \|u_{i}\|_{X^{s,\frac{1}{2}}}.$$
From duality, it is enough to show
\begin{equation}    \label{TL *}
\left| \int\limits_{\substack{k_{1} +k_{2} + k_{3} + k_{4} = 0 \\(k_{1} + k_{2})(k_{2} +k_{3})(k_{3} + k_{1})\neq 0 }} \langle k_{1}\rangle\int\limits^{}_{\sum\limits_{i=1}^{4}\tau_{i} = 0}\prod\limits_{i=1}^{4} \tilde{u}_{i}(k_{i},\tau_{i})(dk_{i})_{\lambda}d\tau_{i}\right|\lesssim \prod\limits_{i=1}^{3} \|u_{i}\|_{X^{s,\frac{1}{2}}} \|u_{4}\|_{X^{-s,\frac{1}{2}}}.
\end{equation}
Consider LHS of (\ref{TL *}) and let the region of the first integration to be $``*"$ and region of the second integration is denoted by $``**"$. Define $\sigma_{i} = \tau_{i} - 4\pi k^{3}_{i} \hspace{1.5mm} \text{for}\hspace{1.5mm}1 \leqq i \leq 4.$ Multiply and divide by $\langle k_{4} \rangle^{\frac{1}{2}} \langle \sigma_{4}\rangle^{\frac{1}{2}}$ to get
\begin{equation}    \label{TL3}
\left| \int\limits_{*} \int\limits^{}_{**} \langle k_{1} \rangle^{1} \langle k_{4}\rangle^{s} \langle \sigma_{4} \rangle^{-\frac{1}{2}} \tilde{u}_{1} \tilde{u}_{2} \tilde{u}_{3} (\langle k_{4} \rangle^{-s} \langle \sigma_{4} \rangle^{\frac{1}{2}} \tilde{u}_{4})\right|.
\end{equation}
We divide this estimate into following four cases:
\begin{enumerate}
\item Let $| \sigma_{4} | = \max\{| \sigma_{i}| \hspace{1mm}\text{for} \hspace{1mm} 1 \leq i \leq 4 \}.$
\item Let $| \sigma_{3} | = \max\{| \sigma_{i}| \hspace{1mm}\text{for} \hspace{1mm} 1 \leq i \leq 4 \}.$
\item Let $| \sigma_{2} | = \max\{| \sigma_{i}| \hspace{1mm}\text{for} \hspace{1mm} 1 \leq i \leq 4 \}.$
\item Let $| \sigma_{1} | = \max\{| \sigma_{i}| \hspace{1mm}\text{for} \hspace{1mm} 1 \leq i \leq 4 \}.$
\end{enumerate}
From the symmetry and the duality argument, it is enough to show for Case $1$ because other cases can be treated in the same way. As we know, $k_{1} + k_{2} + k_{3} + k_{4} = 0$ and $\tau_{1} + \tau_{2} + \tau_{3} + \tau_{4} =0,$ from simple calculations, we have 
\begin{equation}   \label{TL4}
\langle \sigma_{4} \rangle \gtrsim 3(|k_{1} + k_{2}||k_{2} + k_{3}||k_{3} + k_{1}|) \sim 3(|k_{2} + k_{3}||k_{3} + k_{4}||k_{4} + k_{2}|).
\end{equation}
From symmetry, we can assume that $|k_{1}| \geq |k_{2}| \geq |k_{3}|.$ Now we can again subdivide all three cases into four cases:
\begin{itemize}
\item[1a] $|k_{1}| \sim |k_{2}| \sim |k_{3}| \sim |k_{4}|$
\item[1b] $|k_{1}| \sim |k_{4}| \gg |k_{2}| \gtrsim |k_{3}|$
\item[1c] $|k_{1}| \sim |k_{4}| \sim |k_{2}| \gtrsim |k_{3}|$
\end{itemize} 

\begin{rem}
Note that there are other cases also but if we consider $|k_{1}|\gg |k_{4}|$, the derivative corresponding to $|k_{4}|$ get very small and the estimate is easy to verify.
\end{rem}

\begin{lemma}     \label{TL all are equal}
For \textbf{Case $1a$}, we give the following proof:
\end{lemma}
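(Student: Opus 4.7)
The plan is to verify inequality (\ref{TL *}) in the subcase $|k_1|\sim|k_2|\sim|k_3|\sim|k_4|\sim N$ with $|\sigma_4|=\max_{i}|\sigma_i|$. The crucial observation is that in this regime the aggregate frequency weight appearing in (\ref{TL3}), namely $\langle k_1\rangle^{1-s}\langle k_2\rangle^{-s}\langle k_3\rangle^{-s}\langle k_4\rangle^{s}$ obtained after $u_4$ has been dualized into $X^{-s,\frac{1}{2}}$, has size $N^{1-2s}$ and is therefore uniformly bounded for $s\geq 1/2$. Consequently the resonance lower bound (\ref{TL4}) on $\langle\sigma_4\rangle$ need not be exploited in Case $1a$; it suffices to treat $\langle\sigma_4\rangle^{-1/2}$ as the standard $X^{0,\frac{1}{2}}$ factor attached to $u_4$.

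Concretely, I would introduce $\tilde V_i=\langle k_i\rangle^{s_i}|\tilde u_i|$ with $s_1=s_2=s_3=s$ and $s_4=-s$, so that $\|V_i\|_{X^{0,\frac{1}{2}}}=\|u_i\|_{X^{s_i,\frac{1}{2}}}$. Substituting $|\tilde u_i|=\langle k_i\rangle^{-s_i}\tilde V_i$ into the left-hand side of (\ref{TL *}) extracts the prefactor $\sim N^{1-2s}\lesssim 1$, multiplied by the integral of $\prod_{i=1}^4\tilde V_i$ over the hyperplane $k_1+k_2+k_3+k_4=0$, $\tau_1+\tau_2+\tau_3+\tau_4=0$. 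By Plancherel this reduces to the space-time integral $\int_{\mathbb{R}\times[0,\lambda]}V_1V_2V_3V_4\,dx\,dt$, after replacing each $V_i$ by a real function whose Fourier transform has the prescribed nonnegative modulus $\tilde V_i$, which does not affect the relevant norms.

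I would then close the estimate by Hölder in $L^4_{t,x}$ applied to the product, followed by the $L^4$ Strichartz estimate of Proposition \ref{st1} applied to each factor with some $b>1/3$, together with the embedding $X^{0,\frac{1}{2}}\hookrightarrow X^{0,b}$. This chain yields
\[
\int_{*}\int_{**}\langle k_1\rangle\prod_{i=1}^{4}|\tilde u_i|\lesssim N^{1-2s}\prod_{i=1}^{4}\|V_i\|_{L^4(\mathbb{R}\times[0,\lambda])}\lesssim \lambda^{0+}\prod_{i=1}^{3}\|u_i\|_{X^{s,\frac{1}{2}}}\|u_4\|_{X^{-s,\frac{1}{2}}},
\]
which is exactly (\ref{TL *}) in Case $1a$.

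The only technical point to monitor is the dependence of the periodic Strichartz constant on $\lambda$, which produces the $\lambda^{0+}$ loss appearing in (\ref{TL1}); this is inherent to Bourgain's $L^4$ bound on the one-dimensional torus and is not an artifact of the present case. Indeed, Case $1a$ is the easiest of the three subcases precisely because the budget $N^{1-2s}\lesssim 1$ already absorbs the single derivative $\langle k_1\rangle$, so the resonance relation (\ref{TL4}) is not used. The genuinely delicate subcases $1b$ and $1c$, in which one $|k_i|$ is much smaller than the other three, will require trading $\langle\sigma_4\rangle^{-1/2}$ against $|k_1+k_2||k_2+k_3||k_3+k_1|$ to recover the missing derivative powers.
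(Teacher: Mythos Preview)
Your argument is correct and is essentially the same as the paper's: both exploit the comparability $|k_1|\sim|k_2|\sim|k_3|\sim|k_4|$ to absorb the derivative weight into the budget $N^{1-2s}\lesssim 1$ for $s\ge 1/2$, and then close with Bourgain's $L^4$ Strichartz estimate. The only cosmetic difference is that the paper stays with the trilinear $X^{s,-1/2}$ norm and uses the dual Strichartz embedding $L^{4/3}\hookrightarrow X^{0,-b}$ on the output together with three $L^4$ bounds on the inputs, whereas you work directly with the dualized four-linear form \eqref{TL *} and apply the $L^4$ bound to all four factors; by duality these are the same computation.
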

\begin{proof}
\renewcommand{\qedsymbol}{}
Note that we wish to prove
\begin{align}      \label{TL M}
\|\partial_{x}M(u,u,u)\|_{X^{s,-\frac{1}{2}}} \lesssim \|u\|^{3}_{X^{s,\frac{1}{2}}},
\end{align}
where 
$$\mathcal{F}_{x}[M(u,v,w)] = \sum\limits_{\substack{k_{1} +k_{2} + k_{3} = k \\|k_{1}| \sim |k_{2}| \sim |k_{3}| }}\hat{u}(k_{1}) \hat{v}(k_{2}) \hat{w}(k_{3}),$$
and $\mathcal{F}$ denotes the Fourier transform in $x$ variable. Hence,
\begin{align*}
\|\partial_{x}M(u,u,u)\|_{X^{s,-\frac{1}{2}}} \sim & \left( \int\limits_{k}^{} \langle k \rangle^{3} \left( \int\limits_{-\infty}^{\infty} \langle \sigma \rangle^{-1} \left| \mathcal{F}_{x,t}[M(u,u,u)] \right|^{2} d\tau \right) (dk)_{\lambda} \right)^{\frac{1}{2}} \\
\sim & \| (\langle k \rangle^{\frac{1}{2}}|\tilde{u}|)^{3} \langle \sigma \rangle^{-\frac{1}{2}} \|_{L^{2}(\mathbb{T} \times \mathbb{R})},
\end{align*} 
where $\mathcal{F}_{x,t}$ is the Fourier transform in both $x$ and $t$ variables. Let $\tilde{v}(k,\tau) = \langle k \rangle^{\frac{1}{2}} |\tilde{u}(k,\tau)|.$ Hence, we get 
\begin{align*}
\| (\langle k \rangle^{\frac{1}{2}}|\tilde{u}|)^{3} \langle \sigma \rangle^{-\frac{1}{2}} \|_{L^{2}(\mathbb{T} \times \mathbb{R})} &\lesssim \|v^{3}\|_{X^{0,-\frac{1}{2}}}, \\
&\lesssim \|v^{3}\|_{L^{\frac{4}{3}}(\mathbb{T} \times \mathbb{R})}, 
\end{align*}
From the duality of Strichartz's estimate and Proposition \ref{st1}, we get
\begin{align*}
\| (\langle k \rangle^{\frac{1}{2}}|\tilde{u}|)^{3} \langle \sigma \rangle^{-\frac{1}{2}} \|_{L^{2}(\mathbb{T} \times \mathbb{R})} &\lesssim \|v\|^{3}_{L^{4}(\mathbb{T} \times \mathbb{R})}, \\
&\lesssim \lambda^{0+}\|u\|^{3}_{_{X^{s,\frac{1}{2}}}}.
\end{align*}
 Therefore, we can handle Case $1a$ directly. 
\end{proof}

\textbf{Case $1b.$}   We assume that the size of the Fourier support of $u_j$ satisfies 
\begin{align}
& |k_1|\sim|k_4| \gg |k_2|, |k_3|, \nonumber\\
& |\sigma_4| \gtrsim |k_2 + k_3||k_3 + k_4||k_4 + k_2|,\nonumber \\
& \frac{1}{\lambda} \leq |k_2 + k_3| \leq 1. \label{periodic 6}
\end{align} 

\textbf{Remark 1.} The restriction $k_1 + k_2 + k_3 + k_4 = 0$ and the assumption imply that $|k_1| \sim |k_4|.$ But it does not follow that $|k_2| \sim |k_3|$ unless (\ref{periodic 6}) additionally assumed.\\

We prove the following estimate of the quardlinear functional on $\mathbb{R} \times \lambda\mathbb{T}$ with parameter $\lambda \geq 1$.

\begin{lemma}
For the above conditions, we have
\begin{align}
& \left| \int\limits_{*} \int\limits^{}_{**} \langle k_{1} \rangle^{1} \langle k_{4}\rangle^{s} \langle \sigma_{4} \rangle^{-\frac{1}{2}} \tilde{u}_{1} \tilde{u}_{2} \tilde{u}_{3} (\langle k_{4} \rangle^{-\frac{1}{2}} \langle \sigma_{4} \rangle^{\frac{1}{2}} \tilde{u}_{4})\right| \nonumber \\
 \lesssim & (1 + \lambda^{0+}) \min \lbrace \|u_2\|_{X^{1/4+,1/2}} \|u_3\|_{X^{0,1/2}} , \|u_2\|_{X^{0,1/2}}\|u_3\|_{X^{1/4+,1/2}} \rbrace \times \|u_1\|_{X^{s,1/2}} \|u_4\|_{X^{-s,1/2}}. \label{periodic 1}
\end{align}
\end{lemma}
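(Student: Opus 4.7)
My plan is to exploit the resonance identity $|\sigma_4|\gtrsim|k_2+k_3|\,|k_1|^{2}$ to absorb both the derivative $\langle k_1\rangle$ and the weight $\langle k_4\rangle^{s}$ from the test function, reducing the quadrilinear form to a low-frequency integral with singular weight $|k_2+k_3|^{-1/2}$, and then to close the estimate by the $L^{4}$-Strichartz estimate of Proposition \ref{st1} after a dyadic decomposition of $|k_2+k_3|\in[1/\lambda,1]$.

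For the first step, the regime $|k_1|\sim|k_4|\gg|k_2|,|k_3|$ automatically forces $|k_3+k_4|\sim|k_4+k_2|\sim|k_1|$, so the given dispersion hypothesis sharpens to $|\sigma_4|\gtrsim|k_2+k_3|\,|k_1|^{2}$. Combined with $\langle k_4\rangle\sim\langle k_1\rangle$, the identity $\langle k_4\rangle^{s}\langle k_4\rangle^{-s}=1$, and the renormalization $\langle k_1\rangle^{s}\tilde u_1=g_1/\langle\sigma_1\rangle^{1/2}$ (so $\|g_1\|_{L^{2}}=\|u_1\|_{X^{s,1/2}}$), the expression in (\ref{TL3}) is majorized by
$$\int_{*}\int_{**}\frac{g_1\,\tilde u_2\,\tilde u_3\,\tilde U_4}{|k_2+k_3|^{1/2}\,\langle\sigma_1\rangle^{1/2}},$$
where $\tilde U_4=\langle k_4\rangle^{-s}\langle\sigma_4\rangle^{1/2}|\tilde u_4|$, so that $\|U_4\|_{L^{2}}=\|u_4\|_{X^{-s,1/2}}$.

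For the second step, I decompose $|k_2+k_3|\sim 2^{-j}$ dyadically; the constraint (\ref{periodic 6}) forces $0\le j\lesssim\log_{2}\lambda$, and summing over $j$ gives at worst the factor $1+\lambda^{0+}$ in (\ref{periodic 1}), with the constant $1$ coming from the $j=0$ slice where $|k_2+k_3|\sim 1$. On each slice, Parseval converts the integral into a physical-space pairing on $\mathbb R\times\lambda\mathbb T$, and Hölder applied in the grouping $(h_1\cdot h_4)\cdot P_{2^{-j}}(u_2u_3)$ followed by Proposition \ref{st1} on the two $L^{4}_{x,t}$ factors produces the desired $\|u_1\|_{X^{s,1/2}}\|u_4\|_{X^{-s,1/2}}$ contribution. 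The remaining $L^{2}_{x,t}$ factor $\|P_{2^{-j}}(u_2u_3)\|_{L^{2}_{x,t}}$ is estimated by Cauchy-Schwarz on the convolution restricted to sum frequency $\sim 2^{-j}$; placing $1/4+$ of a derivative on whichever of $u_2,u_3$ is more convenient yields
$$\|P_{2^{-j}}(u_2u_3)\|_{L^{2}_{x,t}}\lesssim 2^{-j/2}\min\bigl\{\|u_2\|_{X^{1/4+,1/2}}\|u_3\|_{X^{0,1/2}},\,\|u_2\|_{X^{0,1/2}}\|u_3\|_{X^{1/4+,1/2}}\bigr\},$$
so the $2^{-j/2}$ exactly cancels the $|k_2+k_3|^{-1/2}\sim 2^{j/2}$ arising from the first step and produces the $\min$ on the right-hand side of (\ref{periodic 1}).

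I expect the main obstacle to be the sharp balance between the $2^{j/2}$ loss from the dispersion step and the $2^{-j/2}$ gain from Bernstein on the low-frequency sum $k_2+k_3$, together with the discrete nature of the lattice $\tfrac{1}{\lambda}\mathbb Z$ on which $k_2+k_3$ lives. In particular the lower threshold $|k_2+k_3|\ge 1/\lambda$ coincides with the lattice spacing, so one must count admissible lattice pairs $(k_2,k_3)$ with $k_2+k_3$ in a dyadic shell of size $2^{-j}\ge 1/\lambda$ on the lattice rather than on $\mathbb R$; without the $1/4+$ of a derivative to spare on one factor the Bernstein step would produce a genuine positive power of $\lambda$ rather than the desired $\lambda^{0+}$.
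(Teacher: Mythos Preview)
Your overall strategy is sound and close to the paper's, but the H\"older splitting in your second step does not close. You place both $h_1$ and $h_4$ in $L^4_{x,t}$ and invoke Proposition~\ref{st1}, claiming this yields $\|u_1\|_{X^{s,1/2}}\|u_4\|_{X^{-s,1/2}}$. The first factor is fine: $\tilde h_1=\langle k_1\rangle^{s}|\tilde u_1|$ gives $\|h_1\|_{L^4}\lesssim\|u_1\|_{X^{s,1/3+}}\le\|u_1\|_{X^{s,1/2}}$. But your $h_4=U_4$ already carries the full modulation weight $\langle\sigma_4\rangle^{1/2}$, so Proposition~\ref{st1} would give $\|U_4\|_{L^4}\lesssim\|U_4\|_{X^{0,1/3+}}=\|u_4\|_{X^{-s,5/6+}}$, not $\|u_4\|_{X^{-s,1/2}}$. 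Since $u_4$ is the dual test function in the case $|\sigma_4|=\max_i|\sigma_i|$, you are not allowed to demand more than half a power of $\langle\sigma_4\rangle$ on it, and in this case there is no other large modulation to borrow from.

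The remedy, which is exactly what the paper does, is to reverse the roles: put $U_4$ in $L^2_{x,t}$ (this is precisely $\|u_4\|_{X^{-s,1/2}}$) and put the low-frequency product in $L^4_{x,t}$ instead. The paper then treats $\|D_x^{-1/2}(v_2v_3)\|_{L^4_{x,t}}$ via the Sobolev embedding $H^{1/4}\hookrightarrow L^4$ in $x$, followed by a Cauchy--Schwarz/Young argument on the Fourier side at fixed $t$ (this is where the $(1+\log\lambda)^{1/2}$ and the $H^{1/4+}$ on one factor appear), and finally $L^4_t\to L^8_t\times L^8_t$ with $X^{0,1/2}\hookrightarrow L^8_tL^2_x$. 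Your dyadic decomposition of $|k_2+k_3|$ would also succeed once the H\"older is corrected: your estimate $\|P_{2^{-j}}(v_2v_3)\|_{L^2_{x,t}}\lesssim 2^{-j/2}\|u_2\|_{X^{0,1/2}}\|u_3\|_{X^{0,1/2}}$ is correct, and a further spatial Bernstein gain of $2^{-j/4}$ passes from $L^2_x$ to $L^4_x$, leaving a summable $2^{-j/4}$ after cancelling the $2^{j/2}$ singularity. The gap is therefore not in the architecture but in which two factors receive the $L^4$ Strichartz bound.
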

\renewcommand{\qedsymbol}{}
\begin{proof}
We follow the argument in \cite[Case 3 in the proof of Proposition 5 on page 733-734]{CKSTT02}. We first note that
\begin{align} \label{periodic 2}
|\sigma_4| \gtrsim |k_2+k_3||k_1|^{2}.
\end{align}
From the Plancherel theorem, inequality  (\ref{periodic 2}) and the Sobolev embedding, the left side of (\ref{periodic 1}) can be bounded by the following inequalities.
\begin{align}
&\left| \int\limits_{*} \int\limits^{}_{**} \langle k_{1} \rangle^{1} \langle k_{4}\rangle^{s} \langle \sigma_{4} \rangle^{-\frac{1}{2}} \tilde{u}_{1} \tilde{u}_{2} \tilde{u}_{3} (\langle k_{4} \rangle^{-s} \langle \sigma_{4} \rangle^{\frac{1}{2}} \tilde{u}_{4})\right| \nonumber\\
& \lesssim  \int\limits_{*} \int\limits^{}_{**} \langle k_{1} \rangle^{s} |\bar{\tilde{u}}_1 (k_1)| (|k_2 + k_3|^{-1/2} |\tilde{u}_2 (k_2)| |\tilde{u}_3 (k_3)|)|\sigma_4|^{1/2} |k_{4}|^{-s} |\tilde{u}_4 (k_4)|  d\tau \nonumber \\
& \lesssim \|D_{x}^{s}v_1\|_{L^{4}(\mathbb{R} \times  \lambda \mathbb{T})} \|D_{x}^{-1/2} (v_2 v_3)\|_{L^{4}(\mathbb{R} \times  \lambda \mathbb{T})} \|v_4\|_{X^{-s,1/2}}\nonumber \\
& \lesssim \|v_1\|_{X^{s,1/3+}} \|D_{x}^{-1/4} (v_2 v_3)\|_{L^{4}(\mathbb{R};L^{2}(\lambda \mathbb{T}))} \|v_4\|_{X^{-s,1/2}},  \label{periodic 3}
\end{align} 
where  $\tilde{v_j} = |\tilde{u_j}|$. Furthermore, by the Plancherel's theorem, $1/\lambda \leq |k_{2}| + |k_{3}| \leq 1,$ Schwarz inequality and the Young's inequality, we have
\begin{align}
&\|D_{x}^{-1/4} (v_2 v_3)\|_{L^{2}(\lambda\mathbb{T})} \lesssim \int\limits_{1/ \lambda \leq |k_{23}| \leq 1} |k_{23}|^{-1/2} \left|\hspace{1.5mm} \int\limits_{k_{23} = k_{2} + k_{3}} \tilde{v_{2}}(k_{2})\tilde{v_{3}}(k_{3}) \right|^{2} \nonumber\\
&\lesssim \Bigg( \int\limits_{1/ \lambda \leq |k_{23}| \leq 1} |k_{23}|^{-1} \Bigg)^{1/2} \Bigg(\int\limits_{1/ \lambda \leq |k_{23}| \leq 1} \Bigg|\hspace{1.5mm} \int\limits_{k_{23} = k_{2} + k_{3}} \tilde{v_{2}}(k_{2})\tilde{v_{3}}(k_{3}) \Bigg|^{4}\Bigg)^{1/2} \nonumber \\
&\lesssim (1 + \log\lambda)^{1/2} \min \lbrace \|v_2\|_{L^{2}(\lambda\mathbb{T})}^{2} \|v_3\|_{H^{1/4 +}(\lambda\mathbb{T})}^{2} , \|v_3\|_{L^{2}(\lambda\mathbb{T})}^{2} \|v_2\|_{H^{1/4 +}(\lambda\mathbb{T})}^{2} \rbrace. \label{periodic 4}
\end{align}
The integration in $t$ over $\mathbb{R}$ of the squared left side of (\ref{periodic 4}) yield
\begin{align}
\|D_{x}^{-1/4} (v_2 v_3)\|_{L^{4}(\mathbb{R} ;L^{2}(\lambda\mathbb{T}))} \nonumber 
\lesssim  (1 + \lambda^{0+}) \min \lbrace \|v_2\|_{L^{8}(\mathbb{R};L^{2}(\lambda\mathbb{T}))}^{2}\\ \|v_3\|_{L^{8}(\mathbb{R};H^{1/4 +}(\lambda\mathbb{T}))}^{2} ,  \|v_3\|_{L^{8}(\mathbb{R};L^{2}(\lambda\mathbb{T}))}^{2} \|v_2\|_{L^{8}(\mathbb{R};H^{1/4 +}(\lambda\mathbb{T})}^{2} \rbrace
 \nonumber \\
 \lesssim (1 + \lambda^{0+}) \min \lbrace \|v_2\|^{2}_{X^{0,1/2}} \|v_3\|^{2}_{X^{1/4 +,1/2}}, \|v_2\|^{2}_{X^{0,1/2}} \|v_3\|^{2}_{X^{1/4 +,1/2}} \rbrace. \label{periodic 5}
\end{align}
Accordingly, from (\ref{periodic 3})-(\ref{periodic 5}) we obtained the desire inequality (\ref{periodic 1}).
\end{proof}

	\textbf{Case 1c.} Inequality (\ref{periodic 2}) becomes
	$$|\sigma_{4}| \gtrsim |k_{2} + k_{4}||k_{1}|^{2}.$$
	Therefore, we can estimate case \textbf{1c} in the similar way as case \textbf{1b}. \\
	
\textbf{For the resonant part} (the second term of operator $J$ (\ref{operator J})), the proof is similar to Lemma \ref{TL all are equal} with $M$ defined in the formula (\ref{TL M}) changes to the following:
$$\mathcal{F}_{x}[M(u,u,u)] = |\hat{u}(k)|^{2} |\hat{u}(k)| .$$
\end{proof}
Now, we prove the trilinear estimate corresponding to the function space $Z^{s}$:
\begin{prop}     \label{TL for L^{1}}
For $s \geq \frac{1}{2} \hspace{1mm}\text{and}\hspace{1mm}u,v,w \in {X^{s,\frac{1}{2}}}$, we have 
\begin{equation} \label{TL5}
\|J[u,v,w]\|_{Z^{s}} \leq C \lambda^{0+} \|u\|_{Y^{s}}\|v\|_{Y^{s}}\|w\|_{Y^{s}}.
\end{equation}
\end{prop}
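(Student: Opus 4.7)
The $Z^{s}$-norm decomposes as $\|F\|_{X^{s,-1/2}}+\|\langle k\rangle^{s}\langle\sigma\rangle^{-1}\tilde F\|_{L^{2}((dk)_{\lambda})L^{1}(d\tau)}$, and I would handle the two summands separately. Since $\|u_{i}\|_{X^{s,1/2}}\leq\|u_{i}\|_{Y^{s}}$, the $X^{s,-1/2}$-part of $\|J[u,v,w]\|_{Z^{s}}$ is controlled immediately by Proposition \ref{TL Main result}, giving the required $\lambda^{0+}\prod\|u_{i}\|_{Y^{s}}$ bound with no new work. All effort then goes into the second piece.

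For that piece I would dualize: since $(L^{2}((dk)_{\lambda})L^{1}(d\tau))^{\ast}=L^{2}((dk)_{\lambda})L^{\infty}(d\tau)$, it suffices to establish
\begin{align*}
\left|\int_{\sum k_{i}=0,\,\sum\tau_{i}=0}\langle k_{1}\rangle\langle k_{4}\rangle^{s}\langle\sigma_{4}\rangle^{-1}\,\tilde u_{1}\,\tilde u_{2}\,\tilde u_{3}\,\tilde h\right|\lesssim \lambda^{0+}\prod_{i=1}^{3}\|u_{i}\|_{Y^{s}}
\end{align*}
for every $h$ with $\|h\|_{L^{2}((dk)_{\lambda})L^{\infty}(d\tau)}\leq 1$, together with the analogous bound for the resonant term in \eqref{operator J}. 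Compared with the integral appearing in the proof of Proposition \ref{TL Main result}, the only changes are that the weight $\langle\sigma_{4}\rangle^{-1/2}$ has been replaced by $\langle\sigma_{4}\rangle^{-1}$ and that the fourth slot $h$ is only bounded in $L^{2}_{k}L^{\infty}_{\tau}$ rather than in $X^{-s,1/2}$. I would then follow the same case analysis as there, splitting on which $|\sigma_{i}|$ is the largest and subdividing along the frequency regimes $1a$/$1b$/$1c$.

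If $|\sigma_{4}|$ is the largest, I use that $\|\langle\sigma\rangle^{-1}\|_{L^{2}(d\tau)}$ is finite uniformly in $k$, so Cauchy--Schwarz in $\tau_{4}$ places $\langle\sigma_{4}\rangle^{-1}\tilde h$ in $L^{2}((dk)_{\lambda})L^{2}(d\tau)$ with norm $\lesssim\|h\|_{L^{2}_{k}L^{\infty}_{\tau}}$; combined with the non-resonance identity $\langle\sigma_{4}\rangle\gtrsim\langle(k_{1}+k_{2})(k_{2}+k_{3})(k_{3}+k_{1})\rangle$ to convert the missing $\langle\sigma_{4}\rangle^{1/2}$ into frequency gains, the estimate reduces to the one already established in Proposition \ref{TL Main result}. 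If instead $|\sigma_{j}|$ is the largest for some $j\in\{1,2,3\}$, I would use the $L^{2}_{k}L^{1}_{\tau}$-summand of $\|u_{j}\|_{Y^{s}}$ in place of its $X^{s,1/2}$-component: the $\tau_{j}$-integration is absorbed by $\|\tilde u_{j}(k_{j},\cdot)\|_{L^{1}(d\tau)}$, leaving an $L^{\infty}_{\tau_{j}}$-type quantity on the remaining product which is again amenable to the Strichartz-based subcase analysis $1a$/$1b$/$1c$. The resonant term of $J$ is handled in parallel, with the (now trivial) resonance relation making the argument strictly simpler, along the lines of Lemma \ref{TL all are equal}. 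The main obstacle is expected to be the second case: one must verify that trading the $X^{s,1/2}$ factor on $u_{j}$ for its $L^{2}_{k}L^{1}_{\tau}$ counterpart does not spoil the $L^{4}$-Strichartz step of Case $1a$ or the $D_{x}^{-1/4}$-product estimate driving Cases $1b$/$1c$, and that the $\lambda^{0+}$ factor survives all of the $\tau$-integrations.
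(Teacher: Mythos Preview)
Your overall architecture matches the paper's---split $Z^s$ into its two pieces, dispose of the $X^{s,-1/2}$ part via Proposition~\ref{TL Main result}, then attack the $L^2_kL^1_\tau$ piece through a $|\sigma_j|$-max case split---but you have the difficulty assessment inverted. The paper shows that the input-max cases $|\sigma_j|=\max$, $j\in\{1,2,3\}$, are the \emph{easy} ones: a single Cauchy--Schwarz in the output $\tau$ gives $\|\langle k\rangle^{s+1}\langle\sigma\rangle^{-1}\tilde{J}\|_{L^2_kL^1_\tau}\lesssim\|J\|_{X^{s,-1/2+\epsilon}}$, and the proof of Proposition~\ref{TL Main result} then applies with room to spare, since in those cases the non-resonance weight is carried by $\langle\sigma_j\rangle^{1/2}$ coming from $\|u_j\|_{X^{s,1/2}}$, while the output slot only needs the Strichartz exponent $b>1/3$. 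No appeal to the $L^2_kL^1_\tau$-summand of $\|u_j\|_{Y^s}$ is needed; the paper in fact bounds everything by $\prod_i\|u_i\|_{X^{s,1/2}}$, which is strictly stronger than your stated right-hand side.

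The genuine work lies in the case $|\sigma_4|=\max$, and there your reduction to Proposition~\ref{TL Main result} fails at the endpoint. To invoke that proposition in dualized form you would need $\|\langle\sigma_4\rangle^{-1/2}\tilde{h}\|_{L^2_{k,\tau}}\lesssim\|h\|_{L^2_kL^\infty_\tau}$, but $\|\langle\sigma\rangle^{-1/2}\|_{L^2_\tau}=\infty$. Sacrificing an $\epsilon$ leaves only $\langle\sigma_4\rangle^{-1/2+\epsilon}$ for the non-resonance step, and in sub-case~1b of Proposition~\ref{TL Main result} that half-power is used sharply (the final factor in the estimate is exactly $\|u_4\|_{X^{-s,1/2}}$, with no $b$-slack). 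The non-resonance relation cannot be invoked twice---once inside Proposition~\ref{TL Main result} and again to manufacture the ``missing $\langle\sigma_4\rangle^{1/2}$''---so your sentence ``convert the missing $\langle\sigma_4\rangle^{1/2}$ into frequency gains, the estimate reduces to the one already established'' does not close. The paper's remedy is to reopen sub-cases 1a/1b/1c and, in~1b, compute $\int\langle k\rangle^{2}\langle\sigma\rangle^{-2}\,d\tau\lesssim|k_2+k_3|^{-1}$ directly over the region where $|\sigma|$ dominates, thereby extracting the needed $|k_2+k_3|^{-1/2}$ factor without the logarithmic divergence; this explicit computation, not a black-box appeal to Proposition~\ref{TL Main result}, is what finishes the argument.
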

\begin{proof}
From Proposition \ref{TL Main result}, it is enough to show 
$$\|\langle k \rangle^{s} \langle k \rangle \langle \sigma \rangle^{-1} J[u,v,w]\|_{L^{2}_{(dk)_{k}}L^{1}_{d\tau}} \leq C\|u\|_{X^{s,\frac{1}{2}}}\|v\|_{X^{s,\frac{1}{2}}}\|w\|_{X^{s,\frac{1}{2}}}.
$$
Similar to Proposition \ref{TL Main result}, we also divide this problem into the following four cases. 
\begin{enumerate}
\item Let $|\sigma| = \max\{|\sigma|,|\sigma_{i}| \hspace{1mm}\text{for} \hspace{1mm} 1 \leq i \leq 3 \}.$
\item Let $|\sigma_{1}| = \max\{|\sigma|,|\sigma_{i}| \hspace{1mm}\text{for} \hspace{1mm} 1 \leq i \leq 3 \}.$
\item Let $|\sigma_{2}| = \max\{|\sigma|,|\sigma_{i}| \hspace{1mm}\text{for} \hspace{1mm} 1 \leq i \leq 3 \}.$
\item Let $|\sigma_{3}| = \max\{|\sigma|,|\sigma_{i}| \hspace{1mm}\text{for} \hspace{1mm} 1 \leq i \leq 3 \}.$
\end{enumerate}
Case $1$ is the worst one. Indeed, otherwise we have by Schwarz's inequality,
\begin{align*}
&\|\langle k \rangle^{s} \langle k \rangle \langle \sigma \rangle^{-1} \sum\limits_{k} \hat{u}_{1} \hat{u}_{2} \hat{u}_{3} \|_{L^{2}_{(dk)_{k}}L^{1}_{\tau}} \\
&\lesssim \left\|\left( \int\limits_{-\infty}^{\infty} \frac{1}{\langle \sigma \rangle^{2(\frac{1}{2} + \epsilon)}} d\tau \right)^{\frac{1}{2}} \left( \int\limits_{-\infty}^{\infty} \frac{\langle k \rangle^{2s} \langle k \rangle^{2}}{\langle \sigma \rangle^{2(\frac{1}{2} - \epsilon)}} \left| \sum\limits_{k}^{} \hat{u}_{1} \hat{u}_{2} \hat{u}_{3} \right|^{2} d\tau \right)^{\frac{1}{2}} \right\|_{L^{2}_{(dk)_{\lambda}}}. \\
&\lesssim C \left\| \frac{\langle k \rangle \langle k \rangle^{s}}{\langle \sigma \rangle^{(\frac{1}{2} - \epsilon)}} \sum\limits_{k}^{} \hat{u}_{1} \hat{u}_{2} \hat{u}_{3} d\tau \right\|_{L^{2}_{(dk)_{k}}L^{2}_{\tau}},
\end{align*}
and hence it reduces to the same proof as in Proposition \ref{TL Main result}. Therefore, we only have to prove Case $1.$ From symmetry, assume that $|k_{1}| \geq |k_{2}| \geq |k_{3}|.$ We divide Case $1$ into further three cases as follow: 
\begin{itemize}
\item [1a.] $|k_{1}| \sim |k_{2}| \sim |k_{3}|.$
\item [1b.] $|k_{1}| \gg |k_{2}| \gtrsim |k_{3}|.$
\item [1c.] $|k_{1}| \sim |k_{2}| \gg |k_{3}|.$
\end{itemize} 
\textbf{Case $1a$}. By the Schwarz's inequality, we have
\begin{align*}
& \int\limits_{-\infty}^{\infty} \langle \sigma \rangle^{-1} |\mathcal{F}_{t,x}[M(u,u,u)]|d\tau \\
&\leq \left(\int\limits_{-\infty}^{\infty} \langle \sigma \rangle^{-1 - \epsilon}d\tau \right)^{\frac{1}{2}} \left(\int\limits_{-\infty}^{\infty} \langle \sigma \rangle^{-1 + \epsilon}|\mathcal{F}_{t,x}[M(u,u,u)]|^{2} d\tau \right)^{\frac{1}{2}},
\end{align*}
where $M$ is defined in (\ref{TL M}). This case is reduces to Lemma \ref{TL all are equal}. \\
\textbf{Case $1b$}. In this case, we can clearly see that $\langle \sigma \rangle \gtrsim |k_{2} + |k_{3}||(\langle k \rangle^{2} + \langle \sigma \rangle)$. Due to symmetry, we can assume that $|k| \sim |k_{1}|.$ By using Schwarz's inequality, we get 
\begin{align*}
&\left|\sum\limits_{k} \langle k \rangle^{s} \langle k \rangle \langle \sigma \rangle^{-1}  \hat{u}_{1} \hat{u}_{2} \hat{u}_{3} \right|_{L^{2}_{(dk)_{k}}L^{1}_{\tau}} \\
&\lesssim \left\|\sum\limits_{k}^{}\left( \int\limits_{-\infty}^{\infty} \frac{\langle k \rangle^{2}}{\langle \sigma \rangle^{2}} d\tau \right)^{\frac{1}{2}} \left( \int\limits_{-\infty}^{\infty} \langle k \rangle^{2s} \left|  \hat{u}_{1} \hat{u}_{2} \hat{u}_{3} \right|^{2} d\tau \right)^{\frac{1}{2}} \right\|_{L^{2}_{(dk)_{\lambda}}}.
\end{align*}
As we can see 
\begin{align*}
\left( \int\limits_{-\infty}^{\infty} \frac{\langle k \rangle^{2}}{\langle \sigma \rangle^{2}} d\tau \right)^{\frac{1}{2}} &\lesssim \left( \int\limits_{-\infty}^{\infty} \frac{\langle k \rangle^{2}}{(\langle \sigma \rangle + |k_{2} + k_{3}| \langle k \rangle^{2})^{2}} d\tau \right)^{\frac{1}{2}}, \\
&= \left( \int\limits_{-\infty}^{\infty} \frac{\langle k \rangle^{2}}{(|\tau - k^{3}| + |k_{1} + k_{2}|\langle k \rangle^{2})^{2}} d\tau \right)^{\frac{1}{2}}, \\
&= \left( \int\limits_{-\infty}^{k^{3}} \frac{\langle k \rangle^{2}}{( k^{3}-\tau +|k_{2} + k_{3}| \langle k \rangle^{2})} d\tau \right)^{\frac{1}{2}} + \left( \int\limits_{k^{3}}^{\infty} \frac{\langle k \rangle^{2}}{(\tau - k^{3} + |k_{2} + k_{3}|\langle k \rangle^{2})} d\tau \right)^{\frac{1}{2}} \\
& \lesssim C|k_{2} + k_{3}|^{-1/2}.
\end{align*}
Hence, from H\"older's inequality, Proposition \ref{st1} and inequality (\ref{periodic 5}), we get
\begin{align*}
&\left\|\sum\limits_{k}^{} |k_{2} + k_{3}|^{-1/2} \langle k \rangle^{s}  \hat{u}_{1} \hat{u}_{2} \hat{u}_{3} \right\|_{L^{2}_{(dk)_{\lambda}} L^{2}_{\tau}} \\
 &\sim \left\|    \sum\limits_{k}^{} (|k_{1}|^{s}\hat{u}_{1}) (|k_{2} + k_{3}|^{-1/2}\hat{u}_{2} \hat{u}_{3}) \right\|_{L^{2}_{(dk)_{\lambda}} L^{2}_{\tau}}, \\
&\lesssim \|D_{x}^{s}u_{1}\|_{L^{4}_{x,t}} \|D_{x}^{-\frac{1}{2}}(u_{2}u_{3})\|_{L^{4}_{x,t}}  \\
&\lesssim \lambda^{0+} \|u_{1}\|_{X^{s,\frac{1}{3}+}} \|u_{2}\|_{X^{\frac{1}{4}+,\frac{1}{2}}} \|u_{3}\|_{X^{0,\frac{1}{2}}}.
\end{align*}
The estimate for the resonant term follows in the same way as Case $1a$.
\end{proof}
Let $u = u_{L} + u_{H}$ where $supp \hspace{1mm}\hat{u}_{L}(k) \subset \{|k| \ll N\}$ and $supp \hspace{1mm}\hat{u}_{H}(k) \subset \{|k| \gtrsim N\}.$ We prove the following corollary:

\begin{cor}    \label{TL corollary}
Let $1 \gg \epsilon\geq 0.$ Let $u,v,w \in X^{s,\frac{1}{2} - \epsilon}.$ Then, the following three estimates hold:
\begin{enumerate}
\item If $v,u$ are low and $w$ is high frequency functions, then we have
\begin{align*}
&\left\|(u_{L}v_{L} - \sum\limits_{l = -\infty}^{\infty}\hat{u}_{L}(l)\hat{v}_{L}(-l))w_{H}\right\|_{X^{1-2\epsilon,-\frac{1}{2} + \epsilon}} \\
&\lesssim\lambda^{0+} C\min\{ \|u_{L}\|_{X^{\frac{1}{2} + \epsilon,\frac{1}{2} - \epsilon}} \|v_{L}\|_{X^{0 ,\frac{1}{2} - \epsilon}}, \|v_{L}\|_{X^{\frac{1}{2} + \epsilon,\frac{1}{2} - \epsilon}} \|u_{L}\|_{X^{0 ,\frac{1}{2} - \epsilon}}\}\|w_{H}\|_{X^{0,\frac{1}{2} - \frac{\epsilon}{2}}}.
\end{align*}
\item If $v,w$ are high and $u$ is low frequency functions, then 
\begin{align*}
&\left\|(u_{L}v_{H} - \sum\limits_{l = -\infty}^{\infty}\hat{u}_{L}(l)\hat{v}_{H}(-l))w_{H}\right\|_{X^{1-2\epsilon,-\frac{1}{2} + \epsilon}} \\
&\lesssim\lambda^{0+} C\min\{ \|u_{L}\|_{X^{\frac{1}{2} + \epsilon,\frac{1}{2} - \epsilon}} \|v_{H}\|_{X^{0 ,\frac{1}{2} - \epsilon}}, \|v_{H}\|_{X^{\frac{1}{2} + \epsilon,\frac{1}{2} - \epsilon}} \|u_{L}\|_{X^{0 ,\frac{1}{2} - \epsilon}}\}\|w_{H}\|_{X^{0,\frac{1}{2} - \frac{\epsilon}{2}}}.
\end{align*}
\item If $u,v\hspace{1.2mm}\text{and}\hspace{1.2mm} w$ all are high frequency functions, then
\begin{align*}
&\left\|(u_{H}v_{H} - \sum\limits_{l = -\infty}^{\infty}\hat{u}_{H}(l)\hat{v}_{H}(-l))w_{H}\right\|_{X^{-2\epsilon,\frac{1}{2} + \epsilon}} \\
& \lesssim \lambda^{0+} \|u_{H}\|_{X^{0,\frac{7}{18} + \epsilon}} \|v_{H}\|_{X^{0,\frac{7}{18} + \epsilon}} \|w_{H}\|_{X^{0,\frac{7}{18} + \epsilon}}.
\end{align*}
\end{enumerate}
\end{cor}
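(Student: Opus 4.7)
All three inequalities rely on a single mechanism: because $\sum_{l}\hat u(l)\hat v(-l)$ is the zero Fourier coefficient of the product $uv$, subtracting it restricts the convolution defining each of the three triple products to frequencies with $k_1+k_2\neq 0$. Combined with the resonance identity used in Case~$1b$ of the proof of Proposition~\ref{TL Main result}, namely
\[
\max_j\langle\sigma_j\rangle \;\gtrsim\; |(k_1+k_2)(k_2+k_3)(k_3+k_1)| \;\gtrsim\; |k_1+k_2|\,\max_j\langle k_j\rangle^{2},
\]
this yields a modulation gain that can be traded, via a power of $\langle\sigma_{\max}\rangle$, for $|k_1+k_2|^{1/2-\epsilon}\max_j\langle k_j\rangle^{1-2\epsilon}$ in the integrand. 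The plan for all three parts is to extract this gain and then distribute the remaining mass using Strichartz-type estimates, absorbing the $\epsilon$ losses into the factor $\lambda^{0+}$.

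For parts~(1) and~(2) I would dualize as in \eqref{TL *}, split into four subcases according to which $|\sigma_j|$ is the largest, and by the symmetry already used in Proposition~\ref{TL Main result} focus on the case $|\sigma_4|$ maximal. In part~(1) the dominant frequency is $|k_3|\gtrsim N$, and the modulation factor $\langle\sigma_4\rangle^{1/2-\epsilon}$ absorbs the full output derivative $\langle k\rangle^{1-2\epsilon}\sim|k_3|^{1-2\epsilon}$ while leaving a weight $|k_1+k_2|^{-1/2+\epsilon}$ acting on the low--low convolution $u_L v_L$. Applying the bilinear estimate \eqref{periodic 4}--\eqref{periodic 5} (with the spatial derivative placed on either $u_L$ or $v_L$, which produces the minimum in the conclusion) together with $L^4$-Strichartz (Proposition~\ref{st1}) on $w_H$ then finishes the estimate; the remaining three $|\sigma_j|$-maximal subcases are treated by the same scheme with minor rearrangements. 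Part~(2) is structurally identical: the high frequencies are now shared between $v_H$ and $w_H$, so the bilinear pairing is applied to $(u_L,v_H)$ (or $(u_L,w_H)$, which gives the min), while $L^4$-Strichartz handles the remaining high-frequency factor.

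Part~(3) is of a different character because the output spatial weight is only $\langle k\rangle^{-2\epsilon}$, so essentially no derivative needs to be recovered from the modulation. This case reduces to a Case~$1a$-style argument as in Lemma~\ref{TL all are equal}: after dualizing and setting $\tilde v_j=|\tilde u_j|$, a H\"older step in $\tau$ converts the $\langle\sigma\rangle^{-1/2+\epsilon}$ weight into a convergent factor, and the remaining triple product is bounded by three $L^4$ Strichartz estimates via duality. The slightly larger modulation index $\tfrac{7}{18}>\tfrac{1}{3}$ on the right-hand side is precisely what is needed to absorb the $\epsilon$ loss in the dual modulation exponent and to ensure convergence of $\int\langle\sigma\rangle^{-1-2\epsilon}\,d\tau$.

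The main technical obstacle, common to all three parts, is the $\epsilon$ accounting: the input modulation exponents are strictly below $1/2$, so the proof cannot simply copy the arguments of Proposition~\ref{TL Main result} and Lemma~\ref{TL all are equal} verbatim but must carry a small cushion at every Strichartz or resonance step. In particular the logarithmic factor $(1+\log\lambda)^{1/2}$ appearing in \eqref{periodic 4} from the summation $1/\lambda\le|k_2+k_3|\le 1$ must be absorbed into $\lambda^{0+}$ without consuming so much room that the $\epsilon$-losses on the modulation exponents become unmanageable.
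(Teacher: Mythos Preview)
Your proposal is correct and follows essentially the same route as the paper. For parts~(1) and~(2) the paper dualizes, splits on $\max_j|\sigma_j|$, uses the resonance gain $\langle\sigma_4\rangle\gtrsim|k_1+k_2|\langle k_4\rangle^{2}$ to cancel the output derivative, and then closes with $\|D_x^{-1/2}(v_1v_2)\|_{L^4}$ via \eqref{periodic 4}--\eqref{periodic 5} together with $L^4$-Strichartz on the remaining high-frequency factor---exactly your plan; for part~(3) the paper's one-line proof (``duality argument and Proposition~\ref{st1}'') is the same $L^4$ mechanism you describe, and your extra H\"older-in-$\tau$ step is harmless but unnecessary since no derivative recovery is needed.
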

\begin{proof}
\textbf{1.} We know that 
$$\mathcal{F}_{x}\left[ (u_{L}v_{L} - \sum\limits_{l = -\infty}^{\infty}\hat{u}_{L}(l)\hat{v}_{L}(-l))w_{H} \right] = \sum\limits_{\substack{k_{1} +k_{2} + k_{3} +k_{4} = 0 \\ k_{1} + k_{2} \neq 0 \\ (k_{1} + k_{2})(k_{2} + k_{3})(k_{3} + k_{1}) \neq 0}} \hat{u}_{L}(k_{1})\hat{v}_{L}(k_{2})\hat{w}_{H}(k_{3}),$$
where $\mathcal{F}_{x}$ denotes the Fourier transform in the $x$ variable. Hence, we need to show that 
\begin{align*}
&\left\| \sum\limits_{k}^{} e^{ikx} \int\limits_{\substack{k_{1} +k_{2} + k_{3} = k \\(k_{1} + k_{2})(k_{2} +k_{3})(k_{3} + k_{1})\neq 0 }} \langle k_{1} \rangle^{1-2\epsilon} \hat{u}_{L}(k_{1}) \hat{v}_{L}(k_{2}) \hat{w}_{H}(k_{3}) \right\|_{X^{0,-\frac{1}{2} + \epsilon}} \\
&\lesssim C\min\{ \|u_{L}\|_{X^{\frac{1}{2} + \epsilon,\frac{1}{2} - \epsilon}} \|v_{L}\|_{X^{0 ,\frac{1}{2} - \epsilon}}, \|v_{L}\|_{X^{\frac{1}{2} + \epsilon,\frac{1}{2} - \epsilon}} \|u_{L}\|_{X^{0 ,\frac{1}{2} - \epsilon}}\}\|w_{H}\|_{X^{0 ,\frac{1}{2} - \frac{\epsilon}{2}}}.
\end{align*}
From duality, it is enough to show
\begin{align} \label{TL6}
&\left| \int\limits_{\substack{k_{1} +k_{2} + k_{3} = k \\(k_{1} + k_{2})(k_{2} +k_{3})(k_{3} + k_{1})\neq 0 }} \int\limits_{\sum\limits^{4}_{i=1}\tau_{i} = 0}^{} \langle k_{4} \rangle^{1 -2\epsilon} \tilde{u}_{1}(k_{1}) \tilde{u}_{2}(k_{2}) \tilde{u}_{3}(k_{3}) \tilde{u}_{4}(k_{4}) \right| \\
&\lesssim C\min\{ \|u_{L}\|_{X^{\frac{1}{2} + \epsilon,\frac{1}{2} - \epsilon}} \|v_{L}\|_{X^{0 ,\frac{1}{2} - \epsilon}}, \|v_{L}\|_{X^{\frac{1}{2} + \epsilon,\frac{1}{2} - \epsilon}} \|u_{L}\|_{X^{0 ,\frac{1}{2} - \epsilon}}\}\|w_{H}\|_{X^{0 ,\frac{1}{2} - \frac{\epsilon}{2}}} \nonumber.
\end{align}
where $u_{1} = u_{L},\hspace{1mm} u_{2} = v_{L},\hspace{1mm} u_{3} = w_{H}$ and let $u_{4} = u_{L} + u_{H}.$ Let $\sigma_{i} = \tau_{i} - 4\pi^{2}k^{3}_{i} \hspace{1.5mm} \text{for} \hspace{1.5mm} 1 \leq i \leq  4.$ We divide the proof into the following four cases:
\begin{enumerate}
\item Let $|\sigma_{4}| = \max\{|\sigma_{i}| \hspace{1mm}\text{for} \hspace{1mm} 1 \leq i \leq 4 \}.$
\item Let $|\sigma_{1}| = \max\{|\sigma_{i}| \hspace{1mm}\text{for} \hspace{1mm} 1 \leq i \leq 4 \}.$
\item Let $|\sigma_{2}| = \max\{|\sigma_{i}| \hspace{1mm}\text{for} \hspace{1mm} 1 \leq i \leq 4 \}.$
\item Let $|\sigma_{3}| = \max\{|\sigma_{i}| \hspace{1mm}\text{for} \hspace{1mm} 1 \leq i \leq 4 \}.$
\end{enumerate}
It is enough to prove for Case $1$ because other cases can be treated in the same way. According to the given conditions, we have $|k_{1}|, |k_{2}| \ll N'$ and $|k_{3}| \sim |k_{4}| \gtrsim N'.$ So, from (\ref{TL4}), $\langle \sigma_{4} \rangle \gtrsim \langle k_{4} \rangle ^{2}|k_{3} + |k_{4}| \hspace{1.5mm} \text{and} \hspace{1.5mm} 1/\lambda \leq |k_{3} + k_{4}| \leq 1.$ Let the region for the first integration is denoted as $``*"$ and the region of second integration is denoted as $``**".$ By using Plancherel's theorem, H\"older's inequality, for the term (\ref{TL6}), we get
\begin{align*}
&\left| \int\limits_{* } \int\limits_{**}^{} \langle k_{4}\rangle^{1-2\epsilon} \tilde{u}_{1} \tilde{u}_{2} \tilde{u}_{3} \tilde{u}_{4} \right| \\
&\lesssim \left| \int\limits_{* } \int\limits_{**}^{} \langle k_{4} \rangle^{1-2\epsilon} \langle k_{4} \rangle^{-1+2\epsilon} (|k_{1} + k_{2}|^{-1/2}|\tilde{u}_{1}| |\tilde{u}_{2}|) |\tilde{u}_{3}| (|\tilde{u}_{4}|\langle \sigma_{4} \rangle^{\frac{1}{2} - 2\epsilon}) \right|, \\
&\lesssim \|D_{x}^{-1/2}(v_{1}v_{2})\|_{L^{4}_{x,t}} \|v_{3}\|_{L^{4}_{x,t}} \|\tilde{v}_{4}\langle \sigma_{4} \rangle^{\frac{1}{2} - 2\epsilon} \|_{L^{2}_{k,\tau}}.
\end{align*}
for $v_{j} = |u_{j}|.$ From Sobolev embedding, inequality (\ref{periodic 5}) and Proposition \ref{st1}, we get the desired inequality. \\
\textbf{2.} We can prove this case along the similar line. \\
\textbf{3.} Form duality argument and Proposition \ref{st1}, we get the desire estimate.
\end{proof}
\begin{lemma} \label{Infinity Estimate}
\begin{equation}
\|u\|_{L^{\infty}_{x,t}} \lesssim \|u\|_{X^{\frac{1}{2} + \epsilon, \frac{1}{2} + \epsilon}}.
\end{equation}
\end{lemma}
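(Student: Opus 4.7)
The plan is to bound $|u(x,t)|$ pointwise by Fourier inversion and then apply Cauchy--Schwarz, pairing the integrand against the weights that define the $X^{s,b}$-norm. First I would write
\begin{equation*}
u(x,t) = \int e^{2i\pi k x}e^{i\tau t}\tilde{u}(k,\tau)(dk)_{\lambda}\,d\tau,
\end{equation*}
so that, taking absolute values under the integral,
\begin{equation*}
\|u\|_{L^{\infty}_{x,t}} \leq \int |\tilde{u}(k,\tau)|\,(dk)_{\lambda}\,d\tau.
\end{equation*}
Setting $s=b=\tfrac12+\epsilon$, inserting $\langle k\rangle^{s}\langle \tau-4\pi^{2}k^{3}\rangle^{b}\cdot \langle k\rangle^{-s}\langle \tau-4\pi^{2}k^{3}\rangle^{-b}$ inside the integral, and applying Cauchy--Schwarz yields
\begin{equation*}
\|u\|_{L^{\infty}_{x,t}} \leq \|u\|_{X^{s,b}}\cdot \Bigl(\int \langle k\rangle^{-2s}\langle \tau-4\pi^{2}k^{3}\rangle^{-2b}(dk)_{\lambda}\,d\tau\Bigr)^{1/2}.
\end{equation*}

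Second, I would decouple the $\tau$-integral by translating $\sigma=\tau-4\pi^{2}k^{3}$; the $\tau$-integral then becomes $\int \langle \sigma\rangle^{-1-2\epsilon}\,d\sigma$, which is a finite constant depending only on $\epsilon$ since $2b = 1+2\epsilon > 1$. What remains is to bound the frequency sum $\int \langle k\rangle^{-2s}(dk)_{\lambda} = \tfrac{1}{\lambda}\sum_{n\in\mathbb{Z}}\langle n/\lambda\rangle^{-1-2\epsilon}$ uniformly in $\lambda\geq 1$. I would split the sum at $|n|\leq \lambda$ and $|n|>\lambda$: in the first range each summand is $O(1)$ and there are $O(\lambda)$ terms, while in the second range $\langle n/\lambda\rangle^{-1-2\epsilon}\sim (\lambda/|n|)^{1+2\epsilon}$, which sums to $O(\lambda)$ because $1+2\epsilon>1$. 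Dividing by $\lambda$ gives a bound independent of $\lambda$.

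Combining both estimates yields $\|u\|_{L^{\infty}_{x,t}}\lesssim \|u\|_{X^{1/2+\epsilon,\,1/2+\epsilon}}$ with an implicit constant depending only on $\epsilon$. The proof is essentially a textbook Sobolev-type embedding on the Fourier side; the only mildly non-trivial point is the uniformity in $\lambda$, which is handled by the Riemann-sum comparison above. I do not anticipate any real obstacle, so the main work is simply presenting these two short steps cleanly.
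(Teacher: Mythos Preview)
Your argument is correct. The direct Fourier inversion plus Cauchy--Schwarz gives $\|u\|_{L^{\infty}_{x,t}}\le \|\tilde u\|_{L^{1}_{k,\tau}}$, and your verification that $\int \langle k\rangle^{-1-2\epsilon}(dk)_{\lambda}$ is bounded uniformly in $\lambda\ge 1$ by a Riemann-sum comparison is the right way to handle the scaling.

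The paper takes a slightly different route: it first proves the intermediate embedding $\|u\|_{L^{\infty}_{t}L^{2}_{x}}\lesssim \|u\|_{X^{0,1/2+\epsilon}}$ by writing $\|u(t)\|_{L^{2}_{x}}=\|U(-t)u(t)\|_{L^{2}_{x}}$ with $U(t)=e^{-t\partial_{x}^{3}}$ and applying the one-dimensional Sobolev embedding $H^{1/2+\epsilon}_{t}\hookrightarrow L^{\infty}_{t}$ to the twisted function $t\mapsto U(-t)u(t)$. The spatial $1/2+\epsilon$ regularity is then (implicitly) used via $H^{1/2+\epsilon}_{x}\hookrightarrow L^{\infty}_{x}$ to pass from $L^{\infty}_{t}L^{2}_{x}$ to $L^{\infty}_{x,t}$. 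Your approach is more elementary and self-contained, and it makes the uniformity in $\lambda$ explicit; the paper's approach is conceptually modular (two separate Sobolev embeddings in $x$ and in the twisted $t$), but as written it only records the $L^{\infty}_{t}L^{2}_{x}$ step.
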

\begin{proof}
$$\|u\|_{L^{\infty}_{t}L^{2}_{x}}^{2} = \sup\limits_{t \in \mathbb{R}}^{}\|U(-t)u(t)\|_{L^{2}_{x}}^{2},$$
where $U(t) = e^{-t\partial_{x}^{3}}.$ By Sobolev embedding, we have
\begin{align*}
\sup\limits_{t \in \mathbb{R}}^{}\|U(-t)u(t)\|_{L^{2}_{x}}^{2} &\lesssim \int \sup\limits_{t \in \mathbb{R}}^{}|U(-t)u(t)|^{2}dx \\
&\lesssim \int \langle \partial_{t} \rangle^{\frac{1}{2} + \epsilon} |U(-t)u(t)|^{2}dx \\
&\sim \|u\|^{2}_{X^{0,\frac{1}{2} +\epsilon}}.
\end{align*}
Hence, we get
$$\|u\|_{L^{\infty}_{t}L^{2}_{x}}^{2} \lesssim \|u\|^{2}_{X^{0,\frac{1}{2} +\epsilon}}.$$
\end{proof}

\section{A Priori Estimate}
In this section, we show a priori estimate of the solution to the mKdV equation which are needed for the proof of Theorem \ref{intro theorem}. The energy for the mKdV equation is given as:
\begin{equation} \label{Priori energy}
E(u) = \int (\partial_{x}u)^{2}- (u)^{4} dx.
\end{equation}
For the operator $I',$ we have
$$E(I'v) = \int (\partial_{x}I'v)^{2}- (I'v)^{4} dx.$$
From equations (\ref{rescaled1})-(\ref{rescaled2}), we obtain
\begin{align}   \label{Energy}
\dv{(E(I'v))}{t} =& \left[\int (-\partial_{x}^{2} I'v - (I'v)^{3})(-\partial_{x}^{3} I'v - \partial_{x} I'v^{3})\right] \nonumber \\
&+ \left[\int -\lambda^{-3} \partial_{x}^{2} I'v I'g - \lambda^{-3} (I'v)^{3} I'g + \dfrac{1}{2}(I'v)^{4} \gamma \lambda^{-3}\right].
\end{align}
For a Banach space $X,$ we define the space $L^{\infty}_{T'}X$ via the norm:
$$\|u\|_{L^{\infty}_{T'}X} = \sup_{t \in [0,T']} \|u(t)\|_{X}.$$

Multiply equation (\ref{rescaled1}) by $v$ and take $L^{2}$ norm to obtain the following lemma:
\begin{lemma}   \label{L2 bound}
$$\|v(t)\|_{L^{2}}^{2} \lesssim \|v_{0}\|_{L^{2}}\hspace{1mm} exp(-\gamma \lambda^{-3}t) + \frac{\lambda^{-3}}{\gamma}\|g\|_{L^{\infty}_{t}L^{2}}^{2}(1- exp(-\gamma \lambda^{-3} t)).$$
\end{lemma}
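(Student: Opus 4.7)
The plan is to perform a standard energy estimate at the $L^2$ level, directly multiplying the rescaled equation \eqref{rescaled1} by $v$ and integrating in $x$ over $\mathbb{T}$. The hardest part is really bookkeeping: I first want to verify that the dispersive term and the full cubic nonlinearity make no net contribution to $\tfrac{1}{2}\tfrac{d}{dt}\|v\|_{L^2}^2$, so that only the damping term and the forcing term survive.

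More precisely, the dispersive term vanishes by a single integration by parts since $\int v\,\partial_x^3 v\,dx = -\tfrac12\int \partial_x((\partial_x v)^2)\,dx = 0$ on $\mathbb{T}$. For the nonlinearity I would split
\[
\int v\cdot 6\Bigl(v^2 - \tfrac{1}{2\pi}\|v\|_{L^2}^2\Bigr)\partial_x v\,dx = 6\int v^3\partial_x v\,dx - \tfrac{3}{\pi}\|v\|_{L^2}^2\int v\,\partial_x v\,dx,
\]
and observe that the first integral equals $\tfrac{3}{2}\int \partial_x(v^4)\,dx = 0$ and the second $\tfrac{1}{2}\int\partial_x(v^2)\,dx = 0$. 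The damping term contributes $\lambda^{-3}\gamma\|v\|_{L^2}^2$ as it stands, so one arrives at
\[
\tfrac{1}{2}\tfrac{d}{dt}\|v\|_{L^2}^2 + \lambda^{-3}\gamma\|v\|_{L^2}^2 = \lambda^{-3}\int g\,v\,dx.
\]

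Next I would control the right-hand side by Cauchy--Schwarz and Young's inequality, choosing the weight so that the quadratic term in $\|v\|_{L^2}$ can be absorbed into the damping contribution on the left. Concretely, writing $2\lambda^{-3}\|g\|_{L^2}\|v\|_{L^2} \leq \gamma\lambda^{-3}\|v\|_{L^2}^2 + \tfrac{\lambda^{-3}}{\gamma}\|g\|_{L^2}^2$ yields the differential inequality
\[
\tfrac{d}{dt}\|v\|_{L^2}^2 + \gamma\lambda^{-3}\|v\|_{L^2}^2 \;\leq\; \tfrac{\lambda^{-3}}{\gamma}\|g\|_{L^\infty_t L^2}^2.
\]

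Finally, I would apply the integrating factor $e^{\gamma\lambda^{-3}t}$ (equivalently, Gronwall's lemma). Multiplying through and integrating on $[0,t]$ gives
\[
e^{\gamma\lambda^{-3}t}\|v(t)\|_{L^2}^2 - \|v_0\|_{L^2}^2 \;\leq\; \tfrac{\lambda^{-3}}{\gamma}\|g\|_{L^\infty_t L^2}^2\cdot\tfrac{e^{\gamma\lambda^{-3}t}-1}{\gamma\lambda^{-3}},
\]
which after dividing by $e^{\gamma\lambda^{-3}t}$ produces the claimed bound with the exponential decay of the data and the steady-state contribution controlled by $\|g\|_{L^\infty_t L^2}$. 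There is no conceptual obstacle here; the only step that deserves care is confirming the cancellation of the zero-mean-corrected cubic nonlinearity against $v$, which is what justifies using the reduced/renormalized form \eqref{rescaled1} rather than the original mKdV at this stage.
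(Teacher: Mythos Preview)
Your argument is correct and is exactly the approach the paper indicates: the paper's entire ``proof'' is the single sentence ``Multiply equation \eqref{rescaled1} by $v$ and take $L^{2}$ norm to obtain the following lemma,'' and you have carried out precisely that computation (cancellation of the dispersive and renormalized cubic terms, Cauchy--Schwarz/Young on the forcing, then the integrating factor). The only cosmetic point is that the spatial integration is over $\lambda\mathbb{T}$ rather than $\mathbb{T}$, and your final constant $\gamma^{-2}$ in front of $\|g\|_{L^\infty_t L^2}^2$ differs from the $\lambda^{-3}/\gamma$ printed in the statement, but that (like the missing square on $\|v_0\|_{L^2}$) is a typo in the paper's statement, not an issue with your proof.
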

We establish the following lemma:
\begin{lemma}   \label{Energy 2 estimate}
Let $v$ is the solution of IVP (\ref{rescaled1})-(\ref{rescaled2}) for $t\in [0,T'].$ Then, we have 
\begin{align}   \label{eq11}
\|I'v(T')\|_{L^{2}}^{2} exp(\gamma \lambda^{-3}T') \leq C_{1}(\|v(0)\|_{L^{2}}^{2} + \frac{1}{\gamma}\|g\|_{L^{2}}^{2} exp(\gamma \lambda^{-3}T'))
\end{align}
and
\begin{align}  \label{eq12}
\|I'v(T')\|^{2}_{\dot H^{1}} exp(\gamma \lambda^{-3} T') \leq C_{1} \Big(\|I'v(0)\|^{2}_{\dot H^{1}} + \frac{1}{\gamma^{2}} \|I'g\|^{2}_{L^{\infty}_{T'}\dot H^{1}} exp(\gamma \lambda^{-3} T') \nonumber \\ 
+ \|v(0)\|_{L^{2}}^{6} + \frac{1}{\gamma^{4}} \|g\|^{6}_{L^{2}}exp(\gamma \lambda^{-3}T')\Big) +\left| \int \limits_{0}^{T'} M(t)dt\right|,
\end{align}
where 
$$M(t) =exp(\gamma \lambda^{-3} t) \int_{\lambda\mathbb{T}} \lbrace -\partial_{x}^{2} I'v - (I'v)^{3}\rbrace \lbrace-\partial_{x} I'v^{3}-\partial_{x}^{3} I'v \rbrace.$$
\end{lemma}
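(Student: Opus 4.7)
The proof divides into two parts corresponding to the two inequalities.

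Part 1 (Eq.\ (\ref{eq11})): The operator $I'$ is a Fourier multiplier with symbol satisfying $|m'(k)| \leq 1$ for all $k$, since $s<1$ forces $|k|^{s-1}(N')^{1-s}\leq 1$ when $|k|\geq 2N'$ (and $m'\equiv 1$ otherwise). Hence $\|I'v(T')\|_{L^2} \leq \|v(T')\|_{L^2}$, and applying Lemma \ref{L2 bound} at $t=T'$ and multiplying both sides by $e^{\gamma\lambda^{-3}T'}$ yields (\ref{eq11}) directly, after absorbing the harmless $\lambda^{-3}$ factor into $C_1$ (recall $\lambda\geq 1$).

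Part 2 (Eq.\ (\ref{eq12})): The strategy is to integrate the energy identity (\ref{Energy}) against the integrating factor $e^{\gamma\lambda^{-3}t}$, convert $E(I'v)$ back to $\|I'v\|_{\dot H^1}^2$ via Gagliardo--Nirenberg, and use Part 1 to dispose of the $L^2$-norms of $I'v$ that arise. The 1-D Gagliardo--Nirenberg inequality $\|f\|_{L^4}^4 \lesssim \|f\|_{L^2}^3\|\partial_x f\|_{L^2}$ combined with Young gives $\|I'v\|_{L^4}^4 \leq \tfrac{1}{2}\|\partial_x I'v\|_{L^2}^2 + C\|I'v\|_{L^2}^6$, which rearranges to
\[ \|I'v\|_{\dot H^1}^2 \leq 2 E(I'v) + C \|I'v\|_{L^2}^6 . \]
Thus it suffices to bound $E(I'v(T'))e^{\gamma\lambda^{-3}T'}$; the residual $\|I'v(T')\|_{L^2}^6 e^{\gamma\lambda^{-3}T'}$ is controlled by cubing (\ref{eq11}), producing precisely the $\|v(0)\|_{L^2}^6$ and $\gamma^{-k}\|g\|_{L^2}^6 e^{\gamma\lambda^{-3}T'}$ contributions appearing on the right-hand side of (\ref{eq12}).

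To bound $E(I'v(T'))e^{\gamma\lambda^{-3}T'}$, I multiply (\ref{Energy}) by $e^{\gamma\lambda^{-3}t}$ and integrate in $t\in[0,T']$. The commutator-type first bracket contributes exactly $\int_0^{T'}M(t)\,dt$, which is kept as is in the statement. The linear forcing term $-\lambda^{-3}\int \partial_x^2 I'v\cdot I'g$ is integrated by parts to $-\lambda^{-3}\int \partial_x I'v\cdot \partial_x I'g$ and bounded by Cauchy--Schwarz; Young's inequality with weight $\sim\gamma$ then splits it into a $\gamma\lambda^{-3}\|\partial_x I'v\|_{L^2}^2 e^{\gamma\lambda^{-3}t}$ piece (absorbable, via the GN relation above, into the dissipation produced by the damping contribution $\int(-\partial_x^2 I'v - (I'v)^3)(-\gamma\lambda^{-3}I'v)$ in the energy identity) and a $\gamma^{-1}\lambda^{-3}\|\partial_x I'g\|_{L^2}^2 e^{\gamma\lambda^{-3}t}$ remainder whose time integral is $\lesssim \gamma^{-2}\|I'g\|^2_{L^\infty_{T'}\dot H^1}e^{\gamma\lambda^{-3}T'}$. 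The cubic forcing $-\lambda^{-3}\int(I'v)^3 I'g$ is estimated by H\"older with $\|I'v\|_{L^6}^3\lesssim \|I'v\|_{L^2}^{5/2}\|I'v\|_{\dot H^1}^{1/2}$ and a further Young splitting, with the $\|I'v\|_{L^2}$-powers handled via Part 1. The $\tfrac{1}{2}\gamma\lambda^{-3}(I'v)^4$ term of (\ref{Energy}) is split by GN into an $E(I'v)$ piece (absorbable) and an $\|I'v\|_{L^2}^6$ piece (again routed through Part 1).

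The main obstacle is bookkeeping rather than any substantially new analysis: one must orchestrate the Young-inequality splittings so that every ``bad'' $\gamma\lambda^{-3}\|\partial_x I'v\|_{L^2}^2 e^{\gamma\lambda^{-3}t}$ contribution is swallowed by the dissipation carried by the energy identity under the weight $e^{\gamma\lambda^{-3}t}$, while the residuals carry exactly the powers of $\gamma^{-1}$ and $\lambda^{-3}$ claimed in (\ref{eq12}). Some care is also required to reconcile the $(I'v)^4$ damping-side contribution with the exponential weight, so no uncompensated growth is left on the right-hand side. Throughout, the commutator $M(t)$ is deliberately left unestimated: its analysis is the heart of the $I$-method and is deferred to the $4$-linear bound of Lemma \ref{Energy 1 estimate}, which in turn rests on the trilinear estimate of Proposition \ref{TL Main result}.
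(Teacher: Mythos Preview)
Your proposal is correct and follows essentially the same route as the paper: use $|m'|\le1$ together with the $L^2$ balance for (\ref{eq11}), then differentiate $E(I'v)e^{\gamma\lambda^{-3}t}$, isolate $M(t)$, and close the remaining terms with Gagliardo--Nirenberg, Young, and the already-established (\ref{eq11}). The paper phrases Part~2 as computing $\tfrac{d}{dt}\bigl(E(I'v)e^{\gamma\lambda^{-3}t}\bigr)$ directly rather than multiplying (\ref{Energy}) by the weight, but this is only a cosmetic difference; your bookkeeping of the damping dissipation and the $(I'v)^4$ contribution matches what is needed.
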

\begin{proof}
Similar to Lemma \ref{L2 bound}, we have 
\begin{align*}
\frac{d}{dt}\|v(T')\|_{L^{2}}^{2} exp(\gamma \lambda^{-3}T') &=  \left( -\gamma\lambda^{-3}\|v(t)\|_{L^{2}} + 2\lambda^{-3} \int_{\lambda\mathbb{T}}v(t)g(t)dx \right)exp(\gamma\lambda^{-3}T') \\
&\leq \frac{\lambda^{-3}}{\gamma}\|g\|^{2}_{L^{2}}exp(\gamma\lambda^{-3}T').
\end{align*}
Intriguing over $[0,T']$ and from the definition of operator $I$, we get \eqref{eq11}. \par 
From equations (\ref{rescaled1})-(\ref{rescaled2}), we get 
\begin{align*}
&\dv{}{t} \left( E(I'v(t))exp(\gamma \lambda^{-3} t') \right) \hspace{4.5cm} \\  
=& \dv{}{t} E(I'v(t))exp(\gamma \lambda^{-3} t') + \gamma \lambda^{-3} E(I'v(t))exp(\gamma \lambda^{-3} t'), \\
=& \left[ \int \lbrace- \partial_{x}^{2} I'v - (I'v)^{3} \rbrace \lbrace \lambda^{-3}I'g - \gamma \lambda^{-3}I'v - \partial_{x}^{3} I'v - \partial_{x} I'v^{3} \rbrace \right]exp(\gamma \lambda^{-3} t') \\
&+ \gamma \lambda^{-3}  exp(\gamma \lambda^{-3} t') \int \frac{1}{2} (\partial_{x} I'v)^{2} - \dfrac{1}{4}(I'v)^{4}, \\
=& \left[\int (-\partial_{x}^{2} I'v - (I'v)^{3})(-\partial_{x}^{3} I'v - \partial_{x} I'v^{3})\right]exp(\gamma \lambda^{-3} t') \\
&+ \left[\int (-\partial_{x}^{2} I'v - (I'v)^{3})({\lambda}^{-3} I'g - \gamma \lambda^{-3} I'v)\right]exp(\gamma \lambda^{-3} t') \\
&+ \gamma \lambda^{-3}  exp(\gamma \lambda^{-3} t')\int \dfrac{1}{2} (\partial_{x} I'v)^{2} - \dfrac{1}{4}(I'v)^{4},  \\ 
=& M(t')+  \left[\int -\lambda^{-3} \partial_{x}^{2} I'v I'g - \lambda^{-3} (I'v)^{3} I'g -\frac{1}{2}\gamma\lambda^{-3}(\partial_{x}I'v)^{2}+ \dfrac{3}{4}(I'v)^{4} \gamma \lambda^{-3}\right] exp(\gamma \lambda^{-3} t'). 
\end{align*}
 Put the value of $E,$ integrate over $[0,T']$, take absolute value on both side and from Gagliardo-Nirenberg inequality, we get
\begin{align*}
&\left(\|I'v(T')\|_{\dot H^{1}}^{2} - \|I'v(T')\|_{L^{4}}^{4} \right)exp(\gamma\lambda^{-3}T')\\
=& \|I'v(0)\|_{\dot H^{1}}^{2} - \|I'v(0)\|_{L^{4}}^{4} +\int\limits_{0}^{T'} M(t')dt' +\int\limits_{0}^{T'} \Big[\int -\lambda^{-3} \partial_{x}^{2} I'v I'g - \lambda^{-3} (I'v)^{3} I'g \\
&-\frac{1}{2}\gamma\lambda^{-3}(\partial_{x}I'v)^{2}+ \dfrac{3}{4}(I'v)^{4}  \gamma \lambda^{-3}\Big] exp(\gamma \lambda^{-3} t')dt', \\
\lesssim &\|I'v(0)\|_{\dot H^{1}}^{2} - \|I'v(0)\|_{L^{4}}^{4}+ \left| \int\limits_{0}^{T'} M(t')dt'\right| + \lambda^{-3}\int\limits_{0}^{T'}\Big[\|I'g\|_{\dot H^{1}}\|I'v(t')\|_{\dot H^{1}} \\
&+ \|I'v(t')\|_{\dot H^{1}}\|I'v(t')\|^{2}_{L^{2}}\|I'g\|_{L^{2}}  
-\gamma\frac{1}{2}\|I'v(t')\|_{\dot H^{1}}^{2} \\
&+\gamma \frac{3}{4}  \|I'v(t')\|_{\dot H^{1}} \|I'v(t')\|^{3}_{L^{2}} \Big] exp(\gamma \lambda^{-3} t')dt'. 
\end{align*}

From Young's inequality, we have
\begin{align*}
&\|I'v(T')\|_{\dot H^{1}}^{2}exp(\gamma\lambda^{-3}T') \lesssim \|I'v(0)\|_{\dot H^{1}}^{2} + \frac{1}{\gamma^{2}}\|I'g\|_{L^{\infty}_{T'}\dot H^{1}}^{2}exp(\gamma\lambda^{-3}T') \\
+& \left| \int\limits_{0}^{T'} M(t')dt'\right| 
+ C_{1}\|I'v(T')\|^{6}_{L^{2}}exp(\gamma\lambda^{-3}T') \\
&+C_{1} \int\limits_{0}^{T'} \left(\|I'v(t')\|_{L^{2}}^{6} + \frac{1}{\gamma^{2}}\|I'v(t')\|_{L^{2}}^{4}\|I'g\|_{L^{2}}^{2}\right)\gamma\lambda^{-3} exp(\gamma\lambda^{-3}t')dt'.
\end{align*}
From inequality \eqref{eq11} we get
\begin{align*}
\left(\|I'v(t')\|_{L^{2}}^{6} + \frac{1}{\gamma^{2}}\|I'v(t')\|_{L^{2}}^{4}\|I'g\|_{L^{2}}^{2}\right) \lesssim \|I'v(0)\|_{L^{2}}^{6}exp(-3\gamma\lambda^{-3}t') + \frac{1}{\gamma^{3}}\|I'g\|^{6}_{L^{2}}. 
\end{align*}
and hence we obtain inequality \eqref{eq12}.
\end{proof}
\begin{rem}
For mKdV equation, we just consider the half part of damping term  in $exp(\gamma\lambda^{-3}T')$ as compare to KdV equation.
\end{rem}
%
%

We need to state the following Leibnitz rule type lemma:
\begin{lemma}     \label{proposition 0}
$$\|f(t)g(x,t)\|_{X^{s,b}} \lesssim \|\hat{f}\|_{L^{1}}\|g\|_{X^{s,b}} + \|f\|_{H^{b}_{t}}\|\langle k \rangle^{s} \tilde{g}\|_{L^{2}_{(dk)_{\lambda}}L^{1}_{d\tau}}.$$
\end{lemma}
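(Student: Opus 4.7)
The plan is to compute the space-time Fourier transform of $fg$ directly and exploit the fact that $f$ depends only on $t$. Writing $\hat f(\tau)$ for the one-dimensional Fourier transform of $f$ in time, the mixed Fourier transform factorizes as a convolution in $\tau$:
\[
\widetilde{fg}(k,\tau) = \int \hat f(\tau-\tau')\,\tilde g(k,\tau')\,d\tau',
\]
so $\|fg\|_{X^{s,b}}$ is the $L^2_{(dk)_\lambda}L^2_{d\tau}$ norm of this convolution weighted by $\langle k\rangle^s \langle \tau - 4\pi^2 k^3\rangle^b$.

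Next I would split the modulation weight using
\[
\langle \tau - 4\pi^2 k^3 \rangle^b \lesssim \langle \tau - \tau'\rangle^b + \langle \tau' - 4\pi^2 k^3\rangle^b,
\]
which follows from $\tau - 4\pi^2 k^3 = (\tau-\tau') + (\tau' - 4\pi^2 k^3)$ and the elementary inequality $\langle a+b\rangle^b \lesssim \langle a\rangle^b + \langle b\rangle^b$ for $b \ge 0$. This turns the single weighted convolution into two pieces with different weight placements.

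For the piece in which $\langle \tau' - 4\pi^2 k^3\rangle^b$ lands on $\tilde g$, I would apply Young's inequality in $\tau$ in the form $\|F\ast G\|_{L^2_\tau}\le \|F\|_{L^1_\tau}\|G\|_{L^2_\tau}$ with $F=\hat f$ and $G(\tau') = \langle \tau' - 4\pi^2 k^3\rangle^b|\tilde g(k,\tau')|$; multiplying by $\langle k\rangle^s$ and taking $L^2_{(dk)_\lambda}$ yields exactly $\|\hat f\|_{L^1}\|g\|_{X^{s,b}}$. For the piece in which $\langle \tau-\tau'\rangle^b$ lands on $\hat f$, I would apply Young's inequality in the dual form $\|F\ast G\|_{L^2_\tau}\le \|F\|_{L^2_\tau}\|G\|_{L^1_\tau}$, now with $F(\tau) = \langle\tau\rangle^b|\hat f(\tau)|$ and $G = |\tilde g(k,\cdot)|$; since $\|F\|_{L^2_\tau}=\|f\|_{H^b_t}$ by definition, taking $\langle k\rangle^s$ and $L^2_{(dk)_\lambda}$ outside produces $\|f\|_{H^b_t}\|\langle k\rangle^s\tilde g\|_{L^2_{(dk)_\lambda}L^1_{d\tau}}$. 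Summing the two contributions gives the lemma.

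There is essentially no obstacle: the argument is a standard Fourier-analytic computation, and the only real choice is which side of the weight-split each form of Young's inequality is applied to. The asymmetry in the bound—the $L^2_{(dk)_\lambda}L^1_{d\tau}$ norm on $\tilde g$ appearing in the second term instead of the $X^{s,b}$ norm—is forced precisely because the modulation weight has already been absorbed by $\hat f$, leaving no $L^2$ gain in $\tau$ available to spend on $\tilde g$.
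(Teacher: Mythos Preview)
Your proof is correct and follows essentially the same route as the paper's: compute the convolution in $\tau$, split the modulation weight via $\langle a+c\rangle^b\lesssim\langle a\rangle^b+\langle c\rangle^b$, and apply Young's inequality in $\tau$ to each piece in the appropriate $L^1$--$L^2$ pairing. The only cosmetic difference is that the paper parametrizes the convolution by $\tau=\tau_1+\tau_2$ with the weight split as $\langle\tau_1\rangle^b+\langle\tau_2-k^3\rangle^b$, whereas you write it as $\tau-\tau'$ and $\tau'$.
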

\begin{proof}
Assume that $\tau = \tau_{1} + \tau_{2}.$ Let $\sigma = \tau - k^{3}, \sigma_{1} = \tau_{1}$ and $\sigma_{2} = \tau_{2} - k^{3}.$ Then
\begin{align*}
\langle \sigma\rangle^{b} = \langle \tau - k^{3} \rangle^{b} \lesssim \langle \tau_{1} \rangle^{b} + \langle \tau - \tau_{1} - k^{3} \rangle^{b}.
\end{align*}
Hence
\begin{align*}
&\langle \sigma \rangle^{b} \langle k \rangle^{s} \mathcal{F}[f(t)g(x,t)] = \langle \sigma \rangle^{b} \langle k \rangle^{s} \int_{\tau_{1}} \hat{f}(\tau_{1})\tilde{g}(k, \tau - \tau_{1}) d\tau_{1} , \\
&\lesssim \langle k \rangle^{s} \int_{\tau_{1}} \langle \tau_{1} \rangle^{b} |\hat{f}(\tau_{1})\tilde{g}(k, \tau - \tau_{1})| + \langle \tau -\tau_{1} - k^{3} \rangle^{b} |\hat{f}(\tau_{1})\tilde{g}(k, \tau - \tau_{1})| d\tau_{1}.
\end{align*}
After summing over $k$ and taking $L^{2}$ norm, we get 
$$\|\langle \sigma \rangle^{b} \langle k \rangle^{s} \mathcal{F}[f(t)g(x,t)]\|_{L^{2}_{k,\tau}} \leq \|\langle k \rangle^{s} \langle \tau_{1} \rangle^{b}\hat{f} * \tilde{g}\|_{L^{2}_{t,k}} + \|\langle k \rangle^{s} \langle \tau - \tau_{1} - k^{3} \rangle^{b}\hat{f} * \tilde{g}\|_{L^{2}_{t,k}}.$$
From Young's inequality in $\tau$, we obtain 
$$\|\langle k \rangle^{s} \langle \tau_{1} \rangle^{b}\hat{f} * \tilde{g}\|_{L^{2}_{\tau}} + \|\langle k \rangle^{s} \langle \tau - \tau_{1} - k^{3} \rangle^{b}\hat{f} * \tilde{g}\|_{L^{2}_{\tau}} \lesssim \|\hat{f}\|_{L^{1}}\|g\|_{X^{s,b}} + \|f\|_{H^{b}_{t}}\|\langle k \rangle^{s} \tilde{g}\|_{L^{2}_{(dk)_{\lambda}}L^{1}_{d\tau}}. $$
\end{proof}
Similar to \cite[Proposition 3.1]{T}, we finally have the following proposition:
\begin{prop}   \label{proposition 1}
Let $\frac{1}{2} \leq s < 1$. Let $T >0$ is given, $\epsilon >0$ be sufficiently small and $u$ be a solution of IVP (\ref{intro1})-(\ref{intro2}) on $[0,T].$ Assume that $N^{\frac{1}{2}(1 - \epsilon)} \geq \gamma, N^{\epsilon -} \geq C_{6}T$ and 
\begin{align*}
(\|u(0)\|_{L^{2}}^{2} + \frac{1}{\gamma^{2}} \|f\|_{L^{2}}^{2}exp(\gamma T)) \leq N^{\frac{1}{6}(1 - \epsilon)} C_{3} \\
(\|Iu(0)\|_{\dot H^{1}}^{2} + \frac{1}{\gamma^{2}} \|If\|_{\dot H^{1}}^{2}exp(\gamma T)) \leq N^{\frac{1}{6}(1 - \epsilon)} C_{3}.
\end{align*}
Then, we have
\begin{align*}
&\|Iu(T)\|_{L^{2}}^{2}exp(\gamma T) \leq C_{4}(\|u(0)\|_{L^{2}}^{2} + \frac{1}{\gamma^{2}} \|f\|_{L^{2}}^{2}exp(\gamma T)), \\
&\|Iu(T)\|_{\dot H^{1}}^{2}exp(\gamma T) \leq C_{4}(\|Iu(0)\|_{\dot H^{1}}^{2} + \|u(0)\|_{L^{2}}^{6}+ \frac{1}{\gamma^{4}}\|f\|_{L^{2}}^{6}exp(\gamma T)\\
& +  \frac{1}{\gamma^{2}} \|If\|_{\dot H^{1}}^{2}exp(\gamma T)) + (\|Iu(0)\|_{H^{1}}^{2} + \frac{1}{\gamma^{2}} \|If\|_{H^{1}}^{2}exp(\gamma T)),
\end{align*}
where $C_{4}$ is independent of $N$ and $T.$
\end{prop}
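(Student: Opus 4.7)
The plan is to implement the $I$-method iteration of Tsugawa \cite{T}, adapted to the cubic mKdV nonlinearity for which the trilinear machinery of Section 3 replaces the bilinear one used in the KdV case. First I would rescale via $v(x,t)=\lambda^{-1}u(\lambda^{-1}x,\lambda^{-3}t)$, choosing $\lambda\geq 1$ (a power of $N$) so that the hypothesis $(\|Iu(0)\|_{\dot H^{1}}^{2}+\gamma^{-2}\|If\|_{\dot H^{1}}^{2}\exp(\gamma T))\leq N^{(1-\epsilon)/6}C_{3}$ translates into $\|I'v(0)\|_{\dot H^{1}}^{2}+\lambda^{-6}\|I'g\|_{\dot H^{1}}^{2}\exp(\gamma\lambda^{-3}\lambda^{3}T)$ being of order one. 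The assumption $N^{(1-\epsilon)/2}\geq\gamma$ plays the analogous role in the $L^{2}$ scale, which is controlled by Lemma \ref{L2 bound}.

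Next, iterate Proposition \ref{local wellposdeness} to cover the rescaled time interval $[0,\lambda^{3}T]$ by approximately $\lambda^{3}T/\delta$ intervals of unit length $\delta\sim 1$. On each step $[t_{0},t_{0}+\delta]$ the exponentially weighted estimate \eqref{eq12} of Lemma \ref{Energy 2 estimate} yields
\begin{equation*}
\|I'v(t_{0}+\delta)\|_{\dot H^{1}}^{2}e^{\gamma\lambda^{-3}(t_{0}+\delta)}\lesssim \|I'v(t_{0})\|_{\dot H^{1}}^{2}e^{\gamma\lambda^{-3}t_{0}}+(\text{forcing/}L^{2}\text{-terms})+\Bigl|\int_{t_{0}}^{t_{0}+\delta}M(t)\,dt\Bigr|.
\end{equation*}
The forcing and $L^{2}$ terms are controlled by \eqref{eq11} and Lemma \ref{L2 bound} together with the hypothesized exponential bound on the data. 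The heart of the proof is establishing the almost-conservation estimate
\begin{equation*}
\Bigl|\int_{t_{0}}^{t_{0}+\delta}M(t)\,dt\Bigr|\lesssim \lambda^{0+}N'^{-\alpha}\bigl(\|I'v\|_{Y^{1}([t_{0},t_{0}+\delta])}\bigr)^{k}
\end{equation*}
for some $\alpha>0$ and $k\in\{4,6\}$, where $N'=N/\lambda$.

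To prove this, I would expand $M(t)$ as the sum of the quartic piece $\int\partial_{x}^{2}I'v\cdot\partial_{x}(I'v)^{3}+\int(I'v)^{3}\partial_{x}^{3}I'v$ (the term $\int\partial_{x}^{2}I'v\cdot\partial_{x}^{3}I'v$ vanishes by integration by parts) and the sextic piece $\int(I'v)^{3}\partial_{x}(I'v)^{3}$. After integrating by parts and symmetrizing, the quartic part is rewritten as a multilinear functional in which the commutator $I'((I'v)^{3})-(I'v)\cdot(I'v)\cdot(I'v)$ appears, and this gains a factor $N'^{-\alpha}$ via the mean value lemma applied to the multiplier $m'$. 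The surviving multilinear expressions are then split by Littlewood--Paley into the three frequency regimes (low-low-high, low-high-high, high-high-high) and bounded by the three parts of Corollary \ref{TL corollary}, together with Proposition \ref{TL Main result}; the sextic term is absorbed via $L^{\infty}_{x,t}\hookleftarrow X^{\frac{1}{2}+,\frac{1}{2}+}$ from Lemma \ref{Infinity Estimate} together with the $Y^{1}$-control provided by Proposition \ref{TL for L^{1}}. The Leibniz-type Lemma \ref{proposition 0} handles the exponential factor $e^{\gamma\lambda^{-3}t}$ that must be absorbed into the $X^{s,b}$ norms.

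Finally, summing the almost conservation increment over the $\sim\lambda^{3}T/\delta$ time steps produces a total error $\lesssim \lambda^{3}T\cdot N'^{-\alpha}$, and the condition $N^{\epsilon-}\geq C_{6}T$ together with the chosen $\lambda$ keeps this dominated by the initial $\dot H^{1}$ norm, closing the bootstrap. Undoing the rescaling and invoking Lemma \ref{L2 bound} for the $L^{2}$ bound completes the proof. The main obstacle I anticipate is the precise commutator estimate producing the negative power $N'^{-\alpha}$: unlike the KdV case where the bilinear commutator drops $\langle k\rangle^{-1}$ cleanly off the largest frequency, in mKdV the resonant trilinear piece (the second term of $J$ in \eqref{operator J}) behaves differently from the non-resonant one, so both must be carefully tracked, and the logarithmic $(1+\log\lambda)^{1/2}$ losses of Corollary \ref{TL corollary} must be fully absorbed by the $\lambda^{0+}$ factor without violating the power counting needed to beat $\lambda^{3}T$.
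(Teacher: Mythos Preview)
Your plan matches the paper's approach closely: rescale to the $\lambda$-periodic setting, iterate the local theory (Proposition \ref{local wellposdeness}) over $\sim \lambda^{3}T/\delta$ steps, control the increment of the exponentially weighted modified energy via Lemma \ref{Energy 2 estimate}, and absorb the error $\int M(t)\,dt$ using the multilinear almost-conservation Lemma \ref{Energy 1 estimate} together with Lemma \ref{proposition 0} for the exponential weight. The paper carries this out as Proposition \ref{proposition 2} and then undoes the scaling.

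There is one slip in your expansion of $M(t)$ that you should fix. The second factor in $M$ is $-\partial_{x}I'(v^{3})-\partial_{x}^{3}I'v$, not $-\partial_{x}(I'v)^{3}-\partial_{x}^{3}I'v$; with the expressions you wrote, both the quartic piece $\int\partial_{x}^{2}I'v\,\partial_{x}(I'v)^{3}+\int(I'v)^{3}\partial_{x}^{3}I'v$ and the sextic piece $\int(I'v)^{3}\partial_{x}(I'v)^{3}$ vanish identically by integration by parts, and no commutator survives. The correct commutator is $(I'v)^{3}-I'(v^{3})$ (not $I'((I'v)^{3})-(I'v)^{3}$), giving the multiplier difference $1-m(k_{2}+k_{3}+k_{4})/(m(k_{2})m(k_{3})m(k_{4}))$ on which the mean value lemma acts. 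With this correction the remainder of your outline (the low/high splitting, Corollary \ref{TL corollary} for the quartic term, the $L^{4}$ Strichartz and Lemma \ref{Infinity Estimate} for the sextic term, and the summation bounded by $N^{\epsilon-}\geq C_{6}T$) is exactly what the paper does.
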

\begin{rem}
Without loss of generality, we can replace $f$ with $F$ as $F$ is just a translation of $f.$ 
\end{rem}
We can rescale Proposition \ref{proposition 1} by taking $\lambda = N^{\frac{1}{6}(1-\epsilon)}, N' = \frac{N}{\lambda}, T' = \lambda^{3} T.$ Also, we note that $\|I'v\|^{2}_{\dot H^{1}} = \lambda^{-3}\|Iu\|^{2}_{\dot H^{1}}, \|I'g	\|^{2}_{L^{\infty}_{T'}\dot H^{1}} = \lambda^{-3}\|If\|^{2}_{\dot H^{1}}.$ We rewrite Proposition \ref{proposition 1} as following:
\begin{prop}   \label{proposition 2}
Let $\frac{1}{2} \leq s < 1$, $T' >0$ is given and let $v$ be a solution of IVP (\ref{rescaled1})-(\ref{rescaled2}) on $[0,T'].$ Assume that $\lambda^{3} \geq \gamma$ and that for suitable $C_{6},C_{3} >0,$ $ N'^{-}\lambda^{0-} \geq C_{6}T'\lambda^{2}$ and 
\begin{align*}
(\|v(0)\|_{L^{2}}^{2} + \frac{1}{\gamma^{2}} \|g\|_{L^{2}}^{2}exp(\gamma \lambda^{-3} T')) \leq C_{3} \\
(\|I'v(0)\|_{\dot H^{1}}^{2} + \frac{1}{\gamma^{2}} \|I'g\|_{L^{\infty}_{T'}\dot H^{1}}^{2}exp(\gamma \lambda^{-3} T')) \leq  C_{3}.
\end{align*}
Then, we have	
\begin{align*}
&\|I'v(T')\|_{L^{2}}^{2}exp(\gamma \lambda^{-3} T') \leq C_{4} (\|v(0)\|_{L^{2}}^{2} + \frac{1}{\gamma^{2}} \|g\|_{L^{2}}^{2}exp(\gamma \lambda^{-3} T')) \\
&\|I'v(T')\|_{\dot H^{1}}^{2}exp(\gamma \lambda^{-3} T') \leq C_{4} (\|I'v(0)\|_{\dot H^{1}}^{2} + \frac{1}{\gamma^{2}} \|I'g\|_{L^{\infty}_{T'}\dot H^{1}}^{2}exp(\gamma \lambda^{-3} T') \\
& \hspace{40mm}+ \|v(0)\|_{L^{2}}^{6} + \frac{1}{\gamma^{4}} \|g\|_{L^{\infty}_{T'} L^{2}}^{6}exp(\gamma \lambda^{-3} T')) \\
&\hspace{40mm}+ \lambda^{-2}(\|I'v(0)\|_{H^{1}}^{2} + \frac{1}{\gamma^{2}}\|I'g\|_{H^{1}}^{2}exp(\gamma\lambda^{-3}T')),
\end{align*}
where $C_{4}$ is independent of $N',T'$ and $\lambda.$
\end{prop}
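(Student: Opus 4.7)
The plan is to prove Proposition 4.4 by the standard $I$-method bootstrap argument, working directly in the rescaled variables so that the dependence on both $N'$ and $\lambda$ can be tracked simultaneously. (Equivalently, one could first establish Proposition 4.3 in the unscaled variables and then apply the rescaling $\lambda = N^{(1-\epsilon)/6}$, $T' = \lambda^3 T$ exactly as indicated in the excerpt.) The argument naturally splits into the $L^2$ and $\dot H^1$ assertions.

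For the $L^2$ inequality, the key observation is that the multiplier $m'$ satisfies $|m'(k)| \leq 1$ everywhere, so $\|I'v\|_{L^2} \leq \|v\|_{L^2}$. Combined with Lemma 4.2 (the $L^2$ bound on $v$ that one obtains by pairing the equation with $v$), this yields the first inequality immediately.

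For the $\dot H^1$ inequality, the starting point is Lemma 4.3 (Energy 2 Estimate), which produces every term on the right-hand side of the claim except the error integral $\int_0^{T'} M(t)\,dt$, where
$$M(t) = e^{\gamma\lambda^{-3}t}\int_{\lambda\mathbb{T}}\bigl(-\partial_x^2 I'v - (I'v)^3\bigr)\bigl(-\partial_x I'v^3 - \partial_x^3 I'v\bigr).$$
Integration by parts rewrites the principal contribution of $M$ as a quartic commutator of the form $\int \partial_x^3 I'v \cdot ((I'v)^3 - I'(v^3))$ plus lower-order quintic remainders; the essential feature is that if all input frequencies lie below $N'$ then $I'$ is the identity and the symbol vanishes, so only high-frequency inputs survive and come with an $N'^{-\beta}$ frequency gain for some $\beta > 0$. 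To bound the error integral I would split $[0,T']$ into $\sim T'/\delta$ subintervals of length $\delta \sim 1$ produced by the local well-posedness (Proposition 2.7), and run a bootstrap on the hypothesis that $\|I'v(T_j)\|_{H^1}^2 + \gamma^{-2}\|I'g\|_{H^1}^2 e^{\gamma\lambda^{-3}T_j} \lesssim C_3$ propagates from one interval to the next. On each subinterval, the local $Y^1$ bound from Proposition 2.7 together with Corollary 3.4 (parts $2$ and $3$) and the $4$-linear "Energy 1 Estimate" of Section 6 should yield
$$\left|\int_{T_j}^{T_{j+1}} M(t)\,dt\right| \lesssim N'^{-\beta}\lambda^{0+}\bigl(\|I'v(T_j)\|_{H^1}^2 + \gamma^{-2}\|I'g\|_{H^1}^2 e^{\gamma\lambda^{-3}T_j}\bigr)^2.$$
Summing over the subintervals and invoking the hypothesis $N'^{-}\lambda^{0-} \geq C_6 T'\lambda^2$ will then give the claimed $\lambda^{-2}$-type tail bound on $\int_0^{T'} M(t)\,dt$, and the two-step inequality in the proposition follows.

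The main obstacle is the per-subinterval multilinear estimate. Because the mKdV nonlinearity is cubic and carries a nontrivial resonant part, the commutator $I'(v^3) - (I'v)^3$ cannot be dispatched purely by the non-resonant trilinear machinery of Proposition 3.1; one must use the mean-value bound (Lemma 2.3) to extract the $N'^{-\beta}$ gain from the symbol of the commutator while simultaneously tracking the $\lambda^{0+}$ dependence coming from the periodic $L^4$ Strichartz estimate (Proposition 2.8). Balancing these two quantities so that the bootstrap closes at $s > 11/12$ — and in particular handling the resonant term directly, since it cannot be removed by renormalization in the damped-and-forced setting — is the technical heart of the proof and is what forces the threshold $11/12$, in contrast with the much lower threshold available for KdV.
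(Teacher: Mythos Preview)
Your proposal is essentially the same strategy as the paper's proof: split $[0,T']$ into $\sim T'/\delta$ local-existence subintervals, apply the energy identity of Lemma~4.3 to reduce matters to controlling $\int_0^{T'}M(t)\,dt$, invoke the multilinear commutator estimate (Lemma~4.5, proved in Section~6 via Corollary~3.4 and the mean-value bound of Lemma~2.1) on each subinterval, and close an induction using the hypothesis $N'^{-}\lambda^{0-}\geq C_6 T'\lambda^2$. Two small corrections: the remainder beyond the quartic commutator is \emph{sextic} (six-linear), not quintic---the paper's Lemma~4.5 reads $|\int M|\lesssim \lambda^{0+}N'^{-1+}\|I'v\|_{X^{1,1/2}}^4+\lambda^{0+}N'^{-2}\|I'v\|_{X^{1,1/2}}^6$; and the paper uses a Leibniz-type lemma (Lemma~4.2) to move the exponential weight $e^{\gamma\lambda^{-3}t}$ outside the $X^{1,1/2}$ norm, a step you should make explicit when summing over subintervals.
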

\begin{rem}  \label{sobolev remark}
Because of non homogeneity of non homogeneous Sobolev space, we can not rescale the Proposition \ref{proposition 1} into Proposition \ref{proposition 2} with the order of rescaling factor as $\lambda^{-3}$ like the KdV equation.
Also, if we consider the homogeneous Sobolev space, the trilinear and multilinear estimates may not follows for counterexample see appendix. Therefore, we consider the non homogeneous Sobolev space with the rescaling estimate $\|I'v\|_{H^{1}}^{2} \lesssim \lambda^{-1}\|Iu\|_{H^{1}}^{2}.$  We estimate $L^{2}$ and $\dot H^{1}$ separately to prove Proposition \ref{proposition 2} in $H^{1}$. Although, it is not necessary for our problem to have the separate estimates but for the shake of general proof, we estimate it separately.

\end{rem}
\begin{proof}[Proof of Proposition \ref{proposition 2}]
 
Take $\delta > 0$ and $j \in \mathbb{N}$ such that $\delta j = T'$ where $\delta \sim (\|I'v(0)\|_{H^{1}} + \|I'g\|_{L^{\infty}_{T'}H^{1}} + \gamma \lambda^{-3})^{-\alpha}, \alpha >0.$ For $0 \leq m \leq j,\hspace{1mm} m \in \mathbb{Z},$ we prove 
  \begin{align}
    &\|I'v(m\delta)\|^2_{\dot H^1}exp(\gamma \lambda^{-3}m\delta)\notag \\
    \leq& 2C_1 (\|I'v(0)\|^2_{\dot H^1}+\|v(0)\|^6_{L^2}+\frac{1}{\gamma^2} \|I'g\|^2_{\dot H^1}exp(\gamma \lambda^{-3}m\delta)\notag \\
          & +\frac{1}{\gamma^4}\|g\|^6_{L^2}exp(\gamma \lambda^{-3}k\delta))+ \lambda^{-2}(\|I'v(0)\|_{H^{1}}^{2} + \frac{1}{\gamma^{2}}\|I'g\|_{H^{1}}^{2}exp(\gamma\lambda^{-3}T'))\notag \\
     \leq& 4C_1C_3 + \lambda^{-2}(\|I'v(0)\|_{H^{1}}^{2} + \frac{1}{\gamma^{2}}\|I'g\|_{H^{1}}^{2}exp(\gamma\lambda^{-3}T')) \label{prop prio 2}
  \end{align}
  by induction.

  For $m=0$, \eqref{prop prio 2} hold trivially. We assume \eqref{prop prio 2} hold true for $m=l$ where $0\leq l\leq j-1$. 
From Lemma \ref{Energy 2 estimate}, we have
\begin{align}
 &\|I'v((l+1)\delta)\|^2_{\dot H^1}exp(\gamma \lambda^{-3}(l+1)\delta)\leq  C_1 (\|I'v(0)\|^2_{\dot H^1}+\|v(0)\|^6_{L^2} \nonumber\\ 
& +\frac{1}{\gamma^2} \|I'g\|^2_{\dot H^1}exp(\gamma \lambda^{-3}(l+1)\delta) 
  \  +\frac{1}{\gamma^4}\|g\|^6_{L^2}exp(\gamma \lambda^{-3}(l+1)\delta)+\left|\int^{(l+1)\delta}_0M(t)dt\right| \nonumber
\end{align}
Therefore, it suffices to prove
\begin{align*}
\left| \int \limits_{0}^{(l+1)\delta} M(t)dt \right| +  \lesssim \lambda^{-2}  (\|I'v(0)\|_{H^{1}} + \frac{1}{\gamma^{2}}\|I'g\|_{L^{\infty}_{(l+1)\delta}H^{1}} 
exp(\gamma \lambda^{-3}(l+1)\delta)).
\end{align*}

If $\gamma = 0$ and $f = 0$ in Equation (\ref{Energy}), then we have the following estimate:
\begin{lemma} \label{Energy 1 estimate}
$$\left| \int\limits_{0}^{T'} M(t)dt \right| \lesssim \lambda^{0+} N'^{-1+} \|Iu\|^{4}_{X^{1,\frac{1}{2}}_{T'}} + \lambda^{0+} N'^{-2}\|Iu\|^{6}_{X^{1,\frac{1}{2}}_{T'}}.$$
\end{lemma}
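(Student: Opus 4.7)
The plan is to expand the integrand of $M(t)$, after removing the time factor $\exp(\gamma\lambda^{-3}t)$, into four pieces
\begin{align*}
&(A)\ \int \partial_x^2(I'v)\,\partial_x I'(v^3)\,dx, \qquad (B)\ \int \partial_x^2(I'v)\,\partial_x^3(I'v)\,dx,\\
&(C)\ \int (I'v)^3\,\partial_x I'(v^3)\,dx, \qquad (D)\ \int (I'v)^3\,\partial_x^3(I'v)\,dx.
\end{align*}
Here $(B)$ vanishes as a perfect $x$-derivative, while integration by parts in $(D)$ reassembles $(A)+(D)$ into the four-linear commutator
\[
\int \partial_x^2(I'v)\,\partial_x\bigl[I'(v^3)-(I'v)^3\bigr]\,dx,
\]
and $(C)$ remains as a genuine six-linear contribution that supplies the $N'^{-2}\|I'v\|^6$ term.

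For the four-linear commutator I would apply Plancherel on $\lambda\mathbb{T}\times\mathbb{R}$ and rewrite it, after symmetrization in $(k_1,\ldots,k_4)$, as a sum over $k_1+k_2+k_3+k_4=0$ carrying a symbol of the form $\bigl[m'(k_1)^2-m'(k_1)m'(k_2)m'(k_3)m'(k_4)\bigr]\sum_j k_j^3$. The algebraic identity $k_1^3+k_2^3+k_3^3+k_4^3=-3(k_1+k_2)(k_2+k_3)(k_3+k_1)$, valid on the constraint set, combined with a mean-value bound for $m'$ (cf.\ the mean value lemma of Section~2), produces a pointwise symbol estimate carrying the gain $N'^{-1+}$ together with the factor $|(k_1+k_2)(k_2+k_3)(k_3+k_1)|\prod_j m'(k_j)$. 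The cubic modulation factor is absorbed into the largest $\langle\sigma^*\rangle$ exactly as in \eqref{TL4}, and the remaining four-linear form is estimated by the $L^4$ Strichartz bound of Proposition \ref{st1} and the trilinear arguments of Proposition \ref{TL Main result}, yielding $\lambda^{0+}N'^{-1+}\|I'v\|_{X^{1,\frac{1}{2}}_{T'}}^4$.

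For the six-linear term $(C)$ I would Fourier-expand analogously. Here the key observation is that when all six frequencies are $\ll N'$ one has $I'\equiv 1$ and $\int v^3\,\partial_x v^3\,dx=\tfrac12\int \partial_x(v^6)\,dx=0$; consequently at least two frequencies must lie near or above $N'$ for a nonzero contribution, and a double application of the mean value theorem supplies the factor $N'^{-2}$. After extracting this gain, the resulting six-linear form is controlled by three applications of the $L^4$ Strichartz estimate of Proposition \ref{st1} combined with Sobolev embedding through Lemma \ref{Infinity Estimate}, giving $\lambda^{0+}N'^{-2}\|I'v\|_{X^{1,\frac{1}{2}}_{T'}}^6$. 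Time integration over $[0,T']$ is then absorbed into the definition of the restricted $X^{s,b}_{T'}$-norms.

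The main obstacle will be the resonant sub-case of the four-linear term in which $(k_1+k_2)(k_2+k_3)(k_3+k_1)=0$, where the above identity provides no factorization of the dominant modulation. Following the treatment of the second (diagonal) term of $J$ at the end of the proof of Proposition \ref{TL Main result}, this contribution must be isolated and handled directly; it is precisely this resonant correction that accounts for the $\lambda^{0+}$ loss in the final estimate and for the threshold $s\geq\tfrac12$.
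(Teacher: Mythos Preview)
Your plan is essentially the paper's own proof: the same split of $M$ into the four-linear commutator $I'_1=\int\partial_x^3 I'v\,[(I'v)^3-I'(v^3)]\,dx$ and the six-linear piece $I'_2=\int\partial_x(I'v)^3\,[(I'v)^3-I'(v^3)]\,dx$, the same Plancherel/low--high decomposition, the same mean-value bound on the multiplier, and the same closure through the $L^4$ Strichartz estimate and Lemma~\ref{Infinity Estimate}. The paper packages the non-resonant four-linear cases via Corollary~\ref{TL corollary} rather than Proposition~\ref{TL Main result}, but that is only a matter of quoting the right auxiliary estimate.

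Two points in your sketch deserve correction. First, the symmetrized four-linear symbol is not $[m'(k_1)^2-m'(k_1)m'(k_2)m'(k_3)m'(k_4)]\sum_j k_j^3$; the correct form (Remark~\ref{Symmt} in the paper) is $\sum_j k_j^3-(m_1m_2m_3m_4)^{-1}\sum_j k_j^3 m_j^2$, and the factor $\sum_j k_j^3$ does \emph{not} pull out. Second, and more importantly, the resonant sub-case $(k_1+k_2)(k_2+k_3)(k_3+k_1)=0$ is not the source of the $\lambda^{0+}$ loss and does not need to be ``handled directly'': on the resonant set the symmetrized symbol \emph{vanishes identically} (e.g.\ $k_1=-k_4$, $k_2=-k_3$ forces $\sum_j k_j^3=0$ and $\sum_j k_j^3 m_j^2=0$), so there is nothing to estimate. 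The $\lambda^{0+}$ loss comes entirely from the $L^4$ Strichartz inequality applied in the non-resonant cases. If you tried to bound the resonant contribution directly without exploiting this cancellation you would face an unrecoverable $k_1^3$ factor, so recognizing the vanishing is not optional.
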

We prove Lemma \ref{Energy 1 estimate} in last section. \\
Lemma \ref{Energy 1 estimate} implies that
\begin{align*}
\left| \int\limits_{0}^{(l+1)\delta} M(t)dt \right| \sim& \sum\limits_{k=0}^{l}\left| \int\limits_{k\delta}^{(k+1)\delta}M(x,t) dt\right|, \\
\lesssim& (N')^{-1+}\lambda^{0+} \sum\limits_{k=0}^{l}\|exp(\frac{1}{4}\gamma \lambda^{-3}t)I'v\|_{X^{1,\frac{1}{2}}_{([0,\lambda] \times [k\delta, (k+1)\delta])}}^{4}\\ 
&+ (N')^{-2}\lambda^{0+} \sum\limits_{k=0}^{l}\|exp(\frac{1}{6}\gamma \lambda^{-3}t)I'v\|_{X^{1,\frac{1}{2}}_{([0,\lambda] \times [k\delta, (k+1)\delta])}}^{6}.
\end{align*}
From Proposition \ref{proposition 0}, we obtain
\begin{align*}
&\left|\int\limits_{0}^{(l+1)\delta} M(t)dt \right| \\
\lesssim & (N')^{-1+}\lambda^{0+} \sum\limits_{k=0}^{l} \|\widehat{exp(\gamma \lambda^{-3}t)}\|_{L^{1}_{[k\delta,(k+1)\delta]}} \|I'v\|^{4}_{X^{1,\frac{1}{2}}_{([0,\lambda] \times [k\delta, (k+1)\delta])}} \\
&+ (N')^{-1+}\lambda^{0+} \sum\limits_{k=0}^{l} \|exp(\gamma \lambda^{-3} t)\|_{H^{\frac{1}{2}}_{[k\delta, (k+1)\delta]}} \|\langle k \rangle^{s} \tilde{I'v}\|^{4}_{L^{2}_{[0,\lambda]}L^{1}_{[k\delta,(k+1)\delta]}} \\
&+ (N')^{-2}\lambda^{0+} \sum\limits_{k=0}^{l} \|\widehat{exp(\gamma \lambda^{-3}t)}\|_{L^{1}_{[k\delta,(k+1)\delta]}} \|I'v\|^{6}_{X^{1,\frac{1}{2}}_{([0,\lambda] \times [k\delta, (k+1)\delta])}} \\
&+ (N')^{-2}\lambda^{0+} \sum\limits_{k=0}^{l} \|exp(\gamma \lambda^{-3} t)\|_{H^{\frac{1}{2}}_{[k\delta, (k+1)\delta]}} \|\langle k \rangle^{s} \tilde{I'v}\|^{6}_{L^{2}_{[0,\lambda]}L^{1}_{[k\delta,(k+1)\delta]}}.
\end{align*}
From simple computations, we can verify that 
$$\max\limits_{0 \leq l \leq k} \|\widehat{exp(\gamma \lambda^{-3}t)}\|_{L^{1}_{[l\delta,(l+1)\delta]}} \lesssim C\hspace{.5mm} exp(\gamma \lambda^{-3}(l+1)\delta)$$ 
and 
 $$\max\limits_{0 \leq l \leq k} \|exp(\gamma \lambda^{-3} t)\|_{H^{\frac{1}{2}}_{[l\delta, (l+1)\delta]}} \lesssim C \hspace{.5mm} exp(\gamma \lambda^{-3}(l+1)\delta) $$ 
	are bounded. 
From the first inequality of Proposition \ref{local wellposdeness}, we have
\begin{align}
\|I'v\|^{4}_{X^{1,\frac{1}{2}}_{([0,\lambda] \times [k\delta, (k+1)\delta])}} + \|\langle \partial_{x} \rangle I'v\|^{4}_{L^{2}_{[0,\lambda]}L^{1}_{[k\delta,(k+1)\delta]}} \lesssim \|I'v(k\delta)\|_{H^{1}_{[0,\lambda]}}^{4} + (\lambda^{-3}\|I'g\|)^{4}_{L^{\infty}_{(l+1)\delta}H^{1}_{[0,\lambda]}}. \label{Priori 5}\\
\|I'v\|^{6}_{X^{1,\frac{1}{2}}_{([0,\lambda] \times [k\delta, (k+1)\delta])}} + \|\langle \partial_{x} \rangle I'v\|^{6}_{L^{2}_{[0,\lambda]}L^{1}_{[k\delta,(k+1)\delta]}} \lesssim \|I'v(k\delta)\|_{H^{1}_{[0,\lambda]}}^{6} + (\lambda^{-3}\|I'g\|)^{6}_{L^{\infty}_{(l+1)\delta}H^{1}_{[0,\lambda]}}. \label{Priori 6}
\end{align}
Therefore, we have
\begin{align}  \label{Lambda 3and4 2}
\left|\int\limits_{0}^{(l+1)\delta}M(t)dt \right| \lesssim (C_{6}\lambda^{2}T')^{-1} \sum\limits_{k=0}^{l}(\|I'v(k\delta)\|_{H^{1}_{[0,\lambda]}}^{4} + (\lambda^{-3}\|I'g\|)^{4}_{L^{\infty}_{(l+1)\delta}H^{1}_{[0,\lambda]}}exp(\gamma\lambda^{-3}(l+1)\delta)).
\end{align}
From inequalities (\ref{Priori 5}),(\ref{Priori 6}) and the assumption in Proposition \ref{proposition 2}, we get 
\begin{align*}
\left|\int\limits_{0}^{(l+1)\delta} M(t)dt\right| \lesssim 2(C_{6}\lambda^{2} T')^{-1} C_{3}(C_{1}^{2} + C_{1}^{3})(l+1)(\|I'v(0)\|_{H^{1}}  \\ 
+\frac{1}{\gamma^{2}}\|I'g\|^{2}_{L^{\infty}_{T'}H^{1}_{[0,\lambda]}}exp(2\gamma \lambda^{-3}(l+1)\delta)).
\end{align*}
We choose $C_{6}$ sufficiently large such that $2(C_{6}T')^{-1} C_{3}(C_{1}^{2} + C_{1}^{3})(l+1) \leq 2(C_{6}\delta)^{-1}C_{3}(C_{1}^{2} + C_{1}^{3}) \ll 1,$ which leads to Proposition \ref{proposition 2}.
\end{proof} 

\section{Proof of Theorem \ref{intro theorem}}
In this section, we describe the proof of Theorem \ref{intro theorem}.
\begin{proof}[Proof of Theorem \ref{intro theorem}]
Let $0 < \epsilon \ll 12s-11$ be fixed. We choose  $T_{1} > 0$ so that 
\begin{align} 
  exp( \gamma T_{1}) > &(\|u_{0}\|^{2}_{H^{s}}+\|u_0\|^6_{L^2})(\frac{1}{\gamma^2} \|f\|^2_{ H^1}+\frac{1}{\gamma^4}\|f\|^6_{L^2})^{-1} \mathop{max} \bigg\lbrace \gamma^{\frac{4(1-s)}{1-\epsilon}}, (C_{6} T_{1})^{\frac{2(1-s)}{\epsilon-}},\notag \\
                       &  \left(\frac{C_{3}}{2}\|u_{0}\|^{-2}_{H^{s}}\right)^{\frac{12(s-1)}{(1-\epsilon)+12(s-1)}} , \left( 2 C_{3}^{-1} \gamma^{-2} \|f\|^{2}_{H^{1}}exp(\gamma T_{1}) \right)^{\frac{6(-2s+2)}{1-\epsilon}} \bigg\rbrace , \label{main11}
\end{align}
which is possible as $\frac{6(-2s+2)}{1-\epsilon} < 1$.  $T_{1}$ depends only on $\|u_{0}\|_{H^{s}}$, $\|f\|_{H^{1}}$ and $\gamma$. 
Set
\begin{align} \label{main12}
N = \mathop{max} \bigg\lbrace \gamma^{\frac{2}{1-\epsilon}},(C_{6} T_{1})^{\frac{1}{\epsilon-}}, \left(  \frac{C_{3}}{2}\|u_{0}\|^{-2}_{H^{s}}\right)^{\frac{-6}{12(1-s)+(1-\epsilon)}} , 
\left( 2 C_{2}^{-1} \gamma^{-2} \|f\|^{2}_{H^{1}} e^{2\gamma T_{1}} \right)^{\frac{6}{1-\epsilon}} \bigg\rbrace.
\end{align}
From the choice of $T_1$ and $N$, we know
$$N^{\frac{1-\epsilon}{2}} \geq \gamma, \ \  \ \ \ \ N^{\epsilon-} \geq C_{6}T_{1},$$
and 
$$\|Iu_{0}\|_{H^{1}}^{2} \leq N^{2-2s} \|u_{0}\|_{H^{s}}^{2}  \leq \dfrac{C_{3}}{2} N^{\frac{1-\epsilon}{6}-},$$
$$\gamma^{-2} \|If\|^{2}_{H^{1}} e^{2\gamma T_{1}}\leq \dfrac{C_{3}}{2} N^{\frac{1-\epsilon}{6}-}.$$
Hence, from Proposition \ref{proposition 1}, we gains
\begin{align}
  \|u(T_{1})\|_{H^{s}}^{2} \leq& \|Iu(T_{1})\|_{H^{1}}^{2}\notag \\
\leq &C_3 (\|Iu_0\|^2_{ H^1}exp(-\gamma T_{1})+\|u_0\|^6_{L^2}exp(-\gamma T_{1})+\frac{1}{\gamma^2} \|If\|^2_{ H^1}+\frac{1}{\gamma^4}\|f\|^6_{L^2})\notag \\
\leq &C_3 (N^{2(1-s)}(\|u_0\|^2_{ H^s}exp(-\gamma T_{1})+\|u_0\|^6_{L^2}exp(-\gamma T_{1}))+\frac{1}{\gamma^2} \|f\|^2_{ H^1}+\frac{1}{\gamma^4}\|f\|^6_{L^2}).\nonumber                                   \end{align}
From \eqref{main11} and \eqref{main12} , we get
$$N^{2(1-s)}exp(-\gamma T_1) (\|u_0\|^2_{ H^s}+\|u_0\|^6_{L^2})< \frac{1}{\gamma^2} \|f\|^2_{ H^1}+\frac{1}{\gamma^4}\|f\|^6_{L^2}$$
which helps us give the bound
$$\|u(T_{1})\|_{H^{s}}^{2} \leq 2C_{3}( \frac{1}{\gamma^2} \|f\|^2_{ H^1}+\frac{1}{\gamma^4}\|f\|^6_{L^2})< K_{1},$$
where $K_{1}$ depends only on $\|f\|_{ H^1}$ and $\gamma$.

In the next place, one can fix $T_{2} > 0$ and solve  mKdV equation on time interval $[T_{1},T_{1} + T_{2}]$ with initial data replaced by $u(T_{1}).$ Let $K_{2} > 0$ be sufficiently large such that

\begin{align} 
 K_{2} exp( \gamma t) > &(\|u_{0}\|^{2}_{H^{s}}+\|u_0\|^6_{L^2})(\frac{1}{\gamma^2} \|f\|^2_{ H^1}+\frac{1}{\gamma^4}\|f\|^6_{L^2})^{-1} \mathop{max} \bigg\lbrace \gamma^{\frac{4(1-s)}{1-\epsilon}}, (C_{6} t)^{\frac{2(1-s)}{\epsilon-}},\notag \\
                       &  \left((C_{3})^{-1}2 K_{1}\right)^{\frac{12(s-1)}{(1-\epsilon)+12(s-1)}} , \left( 2 C_{3}^{-1} \gamma^{-2} \|f\|^{2}_{H^{1}}exp(\gamma T_{1}) \right)^{\frac{6(-2s+2)}{1-\epsilon}} \bigg\rbrace , \label{TH1.2 3}
\end{align}
for any $t > 0$. Set $N^{2(1-s)} = K_{2}exp(\gamma T_{2})$, then inequality \eqref{TH1.2 3} verifies the assumptions in Proposition \ref{proposition 1} and hence we obtain
\begin{align}
\|Iu(T_{1} + T_{2})\|_{H^{1}}^{2} \leq &C_4 (N^{2(1-s)}\|u(T_1)\|^2_{ H^s}exp(\gamma T_2)+\|u(T_1)\|^6_{L^2}exp(-\gamma T_2)+\frac{1}{\gamma^2} \|f\|^2_{ H^1}+\frac{1}{\gamma^4}\|f\|^6_{L^2}) \notag \\
 \leq &C_4 (K_1K_2+K^2_1+\frac{1}{\gamma^2} \|f\|^2_{ H^1}+\frac{1}{\gamma^4}\|f\|^6_{L^2})<K_3. \nonumber
\end{align}

For $t > T_{1},$ we define the maps $L_{1}(t)$ and $L_{2}(t)$ as
$$\widehat{L_{1}(t)u_{0}} = \widehat{S(t)u_{0}}|_{|\zeta| < N_t}, \ \ \  \widehat{L_{2}(t)u_{0}} = \widehat{S(t)u_{0}}|_{|\zeta| > N_t},$$
where $S(t)u_{0} = u(t)$ and $N_t = (K_{2}exp(\gamma(t-T_{1}))^{-\frac{1}{2(1-s)}}.$

It's easy to see that for $t > T_{1},$ 
\begin{align}
\|L_{1}(t)u_{0}\|_{H^{1}}^{2} \leq \|Iu(t)\|_{H^{1}}^{2} &< K_{3}, \notag \\
\|L_{2}(t)u_{0}\|_{H^{s}}^{2} \leq N^{2s-2}\|Iu(t)\|_{H^{1}}^{2} &< K_{2}^{-1} K_{3}ezp(-\gamma(t - T_{1})).\nonumber
\end{align}
 Hence we obtain Theorem \ref{intro theorem} by taking $K = \mathop{max}\lbrace K_{3}^{\frac{1}{2}}, K_{2}^{-\frac{1}{2}}K_{3}^{\frac{1}{2}}\rbrace$.
\end{proof}


	\section{Multilinear Estimates}
In this section, we prove the $4$-linear and $6$-linear estimates given in Lemma \ref{Energy 1 estimate}.
\begin{proof}[Proof of Lemma \ref{Energy 1 estimate}]
For $\gamma =0$ and $g=0$ in (\ref{Energy}), we have
\begin{align*}
\dv{E}{t} (E(I'v)) =& \left[\int (-\partial_{x}^{2} I'v - (I'v)^{3})(-\partial_{x}^{3} I'v - \partial_{x} I'v^{3})\right], \\
E(I'v(T)) - E(I'v(0)) =& \int\limits_{0}^{T}\int\limits_{0}^{\lambda}\partial_{x}^{3}I'v[(I'v)^{3} - I'v^{3}]dx dt + \int\limits_{0}^{T}\int\limits_{0}^{\lambda}\partial_{x}(I'v)^{3}[(I'v)^{3} - I'v^{3}]dx dt, \\
=& I'_{1} + I'_{2},
\end{align*}
for any arbitrary $T > 0.$ For an $\epsilon >0$ let $w_{j} \in X^{s,\frac{1}{2}}$ such that $w|_{[0,\lambda]\times[0,T]} = v_{j}$ and $\|v_{j}\|_{X^{s,\frac{1}{2}}_{T}} \leq C\|w_{j}\|_{X^{s,\frac{1}{2}}} \leq C\|v_{j}\|_{X^{s,\frac{1}{2} + \epsilon}_{T}}$ for $1 \leq j \leq 4.$ Let $\eta_{T}(t) = \eta(t/T)$ and let $\tilde{\eta}$ denotes the Fourier transform only in $t.$ From the Plancherel's theorem, it suffices to prove the following:
\begin{align*}
I'_{1} = \int\limits_{\mathbb{R}}\int\limits_{0}^{\lambda}\eta(t)\partial_{x}^{3}I'w[(I'w)^{3} - I'w^{3}]dx dt, 
\lesssim  \int\limits_{\substack{k_{1} + k_{2} + k_{3} + k_{4} = 0 \\ (k_{1} + k_{2})(k_{2} + k_{3})(k_{3} + k_{1}) \neq 0}} \int\tilde{\eta}(\tau_{1} + \tau_{2} + \tau_{3} + \tau_{4})\\
 \Bigl|\langle k_{1} \rangle^{3}(\widetilde{I'w_{1}})\left( 1- \frac{m(k_{2} + k_{3} +k_{4})}{m(k_{2})m({k_{3}})m({k_{4}})} \right) (\widetilde{I'w_{2}})(\widetilde{I'w_{3}})(\widetilde{I'w_{4}})\Bigl| (dk_{i})_{\lambda}d\tau_{i}   \\
+ \int\limits_{\Omega} \int \Biggl|\langle k_{1} \rangle^{3}(\widetilde{I'w_{1}})\left( 1- \frac{m(k_{2} + k_{3} +k_{4})}{m(k_{2})m({k_{3}})m({k_{4}})} \right)(\widetilde{I'w_{2}})(\widetilde{I'w_{3}})(\widetilde{I'w_{4}})\Bigl| (dk_{i})_{\lambda}d\tau_{i}, 
= I_{11} + I_{12},
\end{align*}
where $\Omega = \lbrace k_{1} + k_{2} + k_{3} + k_{4} = 0 :\hspace{1.5mm} |k_{1} + k_{2}| \neq 0,\hspace{1.5mm}  (|k_{2} + k_{3}||(k_{3} + k_{1}|) = 0 \rbrace$ and $w_{i}  = w_{i}(k_{i},\tau_{i}).$ Let $w = w_{L} + w_{H}$ where $supp \hspace{1mm}\hat{w}_{L}(k) \subset \{|k| \ll N'\}$ and $supp \hspace{1mm}\hat{w}_{H}(k) \subset \{|k| \gtrsim N'\}.$ From dyadic partition of $|k_{i}|,$ we let $|k_{i}| \sim N'_{i}.$ Let $\sigma_{i} = \tau_{i} - 4\pi^{2}k_{i}^{3}$ for $1 \leq i \leq 4.$ We can assume that $\langle \sigma_{4} \rangle = \max \{\langle\sigma_{i} \rangle,\hspace{1mm} 1 \leq i \leq 4 \rbrace$ as all other cases can be treated in the same way. Let $*$ be the region of integration for $I_{11}$. After substituting $w = w_{L} + w_{H},$ we can write $I_{11}$ as a sum of the following three integrals:
\begin{itemize}
\item[Integral 1.]   
\begin{align}
&\int\limits_{*}^{} \int\tilde{\eta}(\tau_{1} + \tau_{2} + \tau_{3} + \tau_{4}) \Bigg|\langle k_{1} \rangle^{3}(\widetilde{I'w_{H}}) 
\nonumber \\ 
&\left( 1- \frac{m(k_{2} + k_{3} +k_{4})}{m(k_{2})m({k_{3}})m({k_{4}})} \right) (\widetilde{I'w_{L}})(\widetilde{I'w_{L}})(\widetilde{I'w_{H}})\Bigg| (dk_{i})_{\lambda}d\tau_{i} .
\end{align}
\item[Integral 2.] 
\begin{align}
&\int\limits_{*}^{} \int\tilde{\eta}(\tau_{1} + \tau_{2} + \tau_{3} + \tau_{4}) \Bigg|\langle k_{1} \rangle^{3}(\widetilde{I'w_{H}} )\nonumber  \\
&\left( 1- \frac{m(k_{2} + k_{3} +k_{4})}{m(k_{2})m({k_{3}})m({k_{4}})} \right) (\widetilde{I'w_{L}})(\widetilde{I'w_{H}})(\widetilde{I'w_{H}})\Bigg| (dk_{i})_{\lambda}d\tau_{i} .
\end{align}
\item[Integral 3.]  
\begin{align}
&\int\limits_{*}^{} \int\tilde{\eta}(\tau_{1} + \tau_{2} + \tau_{3} + \tau_{4}) \Bigg|\langle k_{1} \rangle^{3}(\widetilde{I'w_{H}})\nonumber \\
&\left( 1- \frac{m(k_{2} + k_{3} +k_{4})}{m(k_{2})m({k_{3}})m({k_{4}})} \right) (\widetilde{I'w_{H}})(\widetilde{I'w_{H}})(\widetilde{I'w_{H}})\Bigg| (dk_{i})_{\lambda}d\tau_{i}.
\end{align}
\end{itemize}
\begin{rem}
We omit other cases as they follows in the similar manner.
\end{rem}
\begin{proof}[\textbf{Integral 1.}] \let\qed\relax

For this case, we have $|k_{1}| \sim |k_{4}| \gtrsim N'$ and $|k_{2}| \sim |k_{3}| \ll N'.$ Hence, by using mean value theorem, we get
 \begin{align*}
\left| \left( 1- \frac{m(k_{2} + k_{3} +k_{4})}{m(k_{2})m({k_{3}})m({k_{4}})} \right) \right| \lesssim \frac{|k_{2}| + |k_{3}|}{|k_{4}|} .
 \end{align*}
For \textit{Integral 1}, we get
\begin{align*}
\textit{Integral 1} \lesssim N_{4}^{-1+ 2\epsilon} \int\limits_{*}^{} \int\tilde{\eta}(\tau_{1} + \tau_{2} + \tau_{3} + \tau_{4})(\langle k_{1} \rangle \widetilde{I'w_{H}} \langle \sigma \rangle^{\frac{1}{2} })\bigg[\langle k_{1} \rangle  \lbrace (|k_{2}|\widetilde{I'w_{L}})(\widetilde{I'w_{L}}) +  \\ (\widetilde{I'w_{L}})(|k_{3}|\widetilde{I'w_{L}}) \rbrace(\langle k_{1} \rangle \widetilde{I'w_{H}}) \langle \sigma \rangle^{-\frac{1}{2} }) \bigg].
\end{align*}
Plancherel's theorem, Schwarz's inequality and Corollary \ref{TL corollary}(1) imply
\begin{align*}
\textit{Integral 1} &\lesssim\lambda^{0+} N'^{-1+ 2\epsilon} \|I'w_{H}\|_{X^{1,\frac{1}{2}}} \|I'w_{L}\|_{X^{1,\frac{1}{2}}} (N_{3})^{-\frac{1}{2}}\|I'w_{L}\|_{X^{1,\frac{1}{2}}} \|I'w_{H}\|_{X^{1,\frac{1}{2}}}, \\
&\lesssim\lambda^{0+} N'^{-1+ 2\epsilon} \|I'w\|^{4}_{X^{1,\frac{1}{2}}}.
\end{align*}
Note that, we neglect $(N_{3})^{-\frac{1}{2}}$ as it is not contributing in the decay.
\end{proof}
\begin{proof}[\textbf{Integral 2}] \let\qed\relax

From given conditions, we have $|k_{1}| \sim |k_{4}|\gg |k_{3}| \gtrsim N'$ and $|k_{2}| \ll N'.$ Also, the definition of $m$ implies $m(k_{2}) \sim 1.$ Therefore,
\begin{align*}
\left| \left( 1- \frac{m(k_{2} + k_{3} +k_{4})}{m(k_{2})m({k_{3}})m({k_{4}})} \right) \right| &\lesssim  \frac{m(k_{1})}{m(k_{2})m({k_{3}})m({k_{4}})} \\
&\sim \frac{1}{m(k_{3})} \\
& \lesssim N'^{-1+s}|k_{3}|^{1-s}\\
& \lesssim N'^{-1}|k_{3}|.
\end{align*}
For \textit{Integral 2}, we get
\begin{align*}
&\textit{Integral 2} \\
&\lesssim N'^{-1+ 2\epsilon} \int\limits_{*}^{} \int\tilde{\eta}(\tau_{1} + \tau_{2} + \tau_{3} + \tau_{4})(\langle k_{1} \rangle \widetilde{I'w_{H}} \langle \sigma \rangle^{\frac{1}{2} })\bigg[ \langle k_{1} \rangle ( \widetilde{I'w_{L}})(|k_{3}|	\widetilde{I'w_{H}})(\langle k_{1} \rangle \widetilde{I'w_{H}}) \langle \sigma \rangle^{-\frac{1}{2} }) \bigg].
\end{align*}
From Plancherel's theorem, Schwarz's inequality and Corollary \ref{TL corollary}(2), we have
\begin{align*}
\textit{Integral 2} &\lesssim N'^{-1+ 2\epsilon} N^{-\frac{1}{2}}_{2} \|I'w_{L}\|_{X^{1,\frac{1}{2}}} \|I'w_{H}\|_{X^{1,\frac{1}{2}}} \|I'w_{H}\|_{X^{1,\frac{1}{2}}} \|I'w_{L}\|_{X^{1,\frac{1}{2}}} \\
&\lesssim N'^{-1+2\epsilon} \|I'w\|^{4}_{X^{1,\frac{1}{2}}}.
\end{align*}
\end{proof}
\begin{proof}[\textbf{Integral 3}] \let\qed\relax

Clearly, we have $|k_{1}|\sim |k_{2}|\sim |k_{3}|\sim |k_{4}| \gtrsim N'.$  Hence, from definition of $m,$ we have
\begin{align*}
\left| \left( 1- \frac{m(k_{2} + k_{3} +k_{4})}{m(k_{2})m({k_{3}})m({k_{4}})} \right) \right| &\lesssim  \frac{m(k_{1})}{m(k_{2})m({k_{3}})m({k_{4}})} \\
&\sim \frac{N'^{-2s+2}|k_{1}|^{s-1}}{|k_{2}|^{s-1} |k_{3}|^{s-1} |k_{4}|^{s-1}} |k_{4}| |k_{4}|^{-1} \\
& \lesssim N'^{-2+2s} |k_{2}|^{1-s} |k_{3}|^{1-s} |k_{4}|^{1-s} |k_{1}|^{s-1} |k_{4}||k_{4}|^{-1}\\
& \lesssim N'^{-1}|k_{4}|,
\end{align*}
for $1/2 \leq s <1.$ Therefore, \textit{Integral 3} implies
\begin{align*}
&\textit{Integral 3} \\
&\lesssim N'^{-1+ 2\epsilon} \int\limits_{*}^{} \int\tilde{\eta}(\tau_{1} + \tau_{2} + \tau_{3} + \tau_{4})(\langle k_{1} \rangle \widetilde{I'w_{H}} \langle \sigma \rangle^{\frac{1}{2} })\bigg[ (\langle k_{1} \rangle \widetilde{I'w_{H}})(\langle k_{1} \rangle \widetilde{I'w_{H}})( |k_{4}| \widetilde{I'w_{H}}) \langle \sigma \rangle^{-\frac{1}{2} }) \bigg].
\end{align*}
From Plancherel's theorem, Schwarz's inequality and Corollary \ref{TL corollary}(3), we have
\begin{align*}
\textit{Integral 3} &\lesssim \lambda^{0+} N'^{-1+ 2\epsilon} \|I'w_{H}\|_{X^{1,\frac{7}{18}+}} \|I'w_{H}\|_{X^{1,\frac{7}{18}+}} \|I'w_{H}\|_{X^{1,\frac{7}{18}+}} \|I'w_{H}\|_{X^{1,\frac{7}{18}+}} \\
&\lesssim\lambda^{0+} N'^{-1+2\epsilon} \|I'w\|^{4}_{X^{1,\frac{1}{2}}}.
\end{align*}
\end{proof}
\begin{rem} \label{Symmt}
Note that
\begin{align*}
\left[ k^{3}_{1}\left( 1- \frac{m(k_{2} + k_{3} +k_{4})}{m(k_{2})m({k_{3}})m({k_{4}})} \right) \right]_{sym} = \sum\limits_{j=1}^{4} k_{j}^{3} - \frac{1}{m_{1} m_{2} m_{3} m_{4}} \sum\limits_{j=1}^{4} k_{j}^{3}m_{j}^{2}
\end{align*}
for details (see \cite[Section 4]{CKSTT02}). Although, even after using symmetrization, we are not able to improve the decay for the above $4$-linear estimate for nonresonant frequencies. Although, this symmetrization leads to the cancellation in the resonant case.
\end{rem}

Hence, for the term $I_{11}$, the estimate holds.  For $I_{12},$ we use the symmetrization as follow:
\begin{itemize}
\item[\textbf{Case 1.}]
$k_{2} + k_{3} = 0.$
\item[\textbf{Case 2.}]
$k_{1} + k_{3} = 0.$
\end{itemize}
\textbf{Case 1.} Clearly, we have $k_{2} = - k_{3}$ and $k_{1} = - k_{4}.$ Therefore, from Remark \ref{Symmt}, we have
\begin{align*}
\left[ k^{3}_{1}\left( 1- \frac{m(k_{2} + k_{3} +k_{4})}{m(k_{2})m({k_{3}})m({k_{4}})} \right) \right]_{sym} = \sum\limits_{j=1}^{4} k_{j}^{3} - \frac{1}{m_{1} m_{2} m_{3} m_{4}} \sum\limits_{j=1}^{4} k_{j}^{3}m_{j}^{2},
\end{align*}
which vanishes for $k_{1} = -k_{4}$ and $k_{2} = -k_{3}.$ \\
\textbf{Case 2.} This case is similar to \textbf{Case 1.}

Now, we consider $I_{2}.$ 
From the Fourier transformation, we get
\begin{align*}
I_{2} =& \int\limits_{0}^{T}\int\limits_{0}^{\lambda}\partial_{x}(I'v)^{3}[(I'v)^{3} - I'v^{3}]dx dt, \\
\lesssim & \int\limits_{\substack{\sum\limits_{i=1}^{6}k_{i} =0}} \int\limits_{\sum\limits_{i=1}^{6} \tau_{i} = 0} \bigg|\langle k_{1}  + k_{2} + k_{3} \rangle(\widetilde{I'v_{1}}) (\widetilde{I'v_{2}}) (\widetilde{I'v_{3}}) \\
&\left( 1- \frac{m(k_{4} + k_{5} +k_{6})}{m(k_{4})m({k_{5}})m({k_{6}})} \right) (\widetilde{I'v_{4}})(\widetilde{I'v_{5}})(\widetilde{I'v_{6}})\bigg| (dk_{i})_{\lambda}d\tau_{i},   
\end{align*}
We may suppose $\langle k_{1} \rangle = \max\lbrace \langle k_{i} \rangle, 1 \leq i \leq 3 \rbrace.$ Putting $v = v_{L} + v_{H},$ we divide the integral $I_{2}$ into the following three integrals:
\begin{itemize}
\item[Integral 4.]   
\begin{align*}
\int\limits_{\substack{\sum\limits_{i=1}^{6} k_{i} = 0}} \int\limits_{\sum\limits_{i=1}^{6} \tau_{i} = 0} (\langle k_{1} \rangle \widetilde{I'v_{H}}) (\widetilde{I'v_{L}} + \widetilde{I'v_{H}}) (\widetilde{I'v_{L}} + \widetilde{I'v_{H}})
 \left( 1- \frac{m(k_{4} + k_{5} +k_{6})}{m(k_{4})m({k_{5}})m({k_{6}})} \right)
\\(\widetilde{I'v_{L}})(\widetilde{I'v_{L}})(\widetilde{I'v_{H}}) (dk_{i})_{\lambda}d\tau_{i}.
\end{align*}
\item[Integral 5.] 
\begin{align*}
\int\limits_{\substack{\sum\limits_{i=1}^{6} k_{i} = 0}} \int\limits_{\sum\limits_{i=1}^{6} \tau_{i} = 0} (\langle k_{1} \rangle \widetilde{I'w_{H}}) (\widetilde{I'v_{L}} + \widetilde{I'v_{H}}) (\widetilde{I'v_{L}} + \widetilde{I'v_{H}})
 \left( 1- \frac{m(k_{4} + k_{5} +k_{6})}{m(k_{4})m({k_{5}})m({k_{6}})} \right)
\\(\widetilde{I'v_{H}})(\widetilde{I'v_{H}})(\widetilde{I'v_{L}}) (dk_{i})_{\lambda}d\tau_{i}.
\end{align*}
\item[Integral 6.]  
\begin{align*}
\int\limits_{\substack{\sum\limits_{i=1}^{6} k_{i} = 0}} \int\limits_{\sum\limits_{i=1}^{6} \tau_{i} = 0} (\langle k_{1} \rangle \widetilde{I'v_{H}}) (\widetilde{I'v_{L}} + \widetilde{I'v_{H}}) (\widetilde{I'v_{L}} + \widetilde{I'v_{H}})
 \left( 1- \frac{m(k_{4} + k_{5} +k_{6})}{m(k_{4})m({k_{5}})m({k_{6}})} \right)
\\(\widetilde{I'v_{H}})(\widetilde{I'v_{H}})(\widetilde{I'v_{H}}) (dk_{i})_{\lambda}d\tau_{i}.
\end{align*}
\end{itemize}
\begin{proof}[\textbf{Integral 4.}] \let\qed\relax

Clearly, we have $|k_{4}|,|k_{5}| \ll N'$ and $|k_{6}| \gtrsim N'.$ Hence, the worst condition is $|k_{3}|,|k_{2}| \ll N'$ and $|k_{1}| \gtrsim N'.$ The proof is the same as in $I_{1}.$ From the mean value theorem, we get
\begin{align} \label{second term 1}
\left|\left( 1- \frac{m(k_{4} + k_{5} +k_{6})}{m(k_{4})m({k_{5}})m({k_{6}})} \right)\right|
 \lesssim \frac{|k_{4}| + |k_{5}|}{|k_{6}|}.
\end{align}
We may assume $\langle \sigma_{1} \rangle = \max \lbrace \langle \sigma_{i} \rangle \hspace{1mm}: \hspace{1mm} 1 \leq i \leq 6\rbrace$ as other cases can be treated in the same way. Therefore,
\begin{equation}  \label{second term 2}
\langle \sigma_{1} \rangle^{ 2\epsilon} = \langle \sigma_{1} \rangle^{3\epsilon} \langle \sigma_{1} \rangle^{-\epsilon} \lesssim \langle \sigma_{1} \rangle^{ 3\epsilon} \langle \sigma_{2} \rangle^{-\frac{\epsilon}{2}} \min\lbrace\langle \sigma_{3} \rangle^{-\frac{\epsilon}{2}}, \langle \sigma_{6} \rangle^{-\frac{\epsilon}{2}} \rbrace.
\end{equation}
From Plancherel's theorem, H\"{o}lder's inequality, Proposition \ref{st1}, Lemma \ref{Infinity Estimate} and inequalities (\ref{second term 1}) and (\ref{second term 2}), we get
\begin{align*}
\textit{Integral 4} \lesssim N'^{-1}\|\mathcal{F}^{-1} (\langle \sigma \rangle^{3\epsilon}\langle k_{1} \rangle\widetilde{I'v_{H}})\|_{L^{4}_{x,t}} \|\mathcal{F}^{-1} (\langle \sigma_{2} \rangle^{-\frac{\epsilon}{2}} \widetilde{I'v_{L}})\|_{L^{\infty}_{x,t}} \|\mathcal{F}^{-1}( \langle \sigma_{3} \rangle^{-\frac{\epsilon}{2}} \widetilde{I'v_{L}})\|_{L^{\infty}_{x,t}}  \\
\| \mathcal{F}^{-1}(\langle k_{4} \rangle \widetilde{I'v_{L}})\|_{L^{4}_{x,t}}\|I'v_{L}\|_{L^{4}_{x,t}}\|I'v_{H})\|_{L^{4}_{x,t}} \\
\lesssim N'^{-2}\|I'v_{H}\|_{X^{1,\frac{1}{3}+4\epsilon}}\|I'v_{L}\|_{X^{\frac{1}{2} + \epsilon, \frac{1}{2} - \frac{\epsilon}{2}}} \|I'v_{L}\|_{X^{\frac{1}{2} + \epsilon, \frac{1}{2} - \frac{\epsilon}{2}}} \|I'v_{L}\|_{X^{1, \frac{1}{3} + \epsilon}} \\
\|I'v_{L}\|_{X^{0,\frac{1}{3} +\epsilon}} \|I'v_{H}\|_{X^{1, \frac{1}{3} + \epsilon}}.
\end{align*}
We neglect extra derivatives corresponding to $N_{2},N_{3}$ and $N_{5}$ to get
$$\textit{Integral 4} \lesssim N'^{-2} \|I'v\|^{6}_{X^{1,\frac{1}{2}}}.$$
\end{proof}
\begin{proof}[Integral 5.] \let\qed\relax

Clearly, we have $|k_{4}|,|k_{5}| \gtrsim N'$ and $|k_{6}| \ll N'.$ Hence, the worst condition is $|k_{3}| \ll N'$ and $|k_{1}|,|k_{2}| \gtrsim N'$ as $|k_{1}|$ always have high frequency. From definition of $m$, we get
\begin{align} \label{second term 3}
\left|\left( 1- \frac{m(k_{4} + k_{5} +k_{6})}{m(k_{4})m({k_{5}})m({k_{6}})} \right)\right| \lesssim \left| \frac{m(k_{1})}{m(k_{4})m({k_{5}})} \right|  \lesssim N'^{-1}N_{5}.
\end{align}
From Plancherel's theorem, H\"{o}lder's inequality, Proposition \ref{st1}, Lemma \ref{Infinity Estimate} and inequalities (\ref{second term 2}) and (\ref{second term 3}), we get
\begin{align*}
\textit{Integral 5} \lesssim N'^{-1}\| \mathcal{F}^{-1} (\langle \sigma \rangle^{3\epsilon}\langle k_{1} \rangle \widetilde{I'v_{H}})\|_{L^{4}_{x,t}} \|I'v_{H}\|_{L^{4}_{x,t}} \| \mathcal{F}^{-1} (\langle \sigma_{3} \rangle^{-\frac{\epsilon}{2}} \widehat{I'v_{L}})\|_{L^{\infty}_{x,t}} \|I'v_{H}\|_{L^{4}_{x,t}} \\
\| \mathcal{F}^{-1} (\langle k_{5} \rangle \widetilde{I'v_{H}})\|_{L^{4}_{x,t}} \| \mathcal{F}^{-1} (\langle \sigma_{6} \rangle^{-\frac{\epsilon}{2}}\widetilde{I'v_{H}})\|_{L^{4}_{x,t}} \\
\lesssim N'^{-1}\|I'v_{H}\|_{X^{1,\frac{1}{3}+4\epsilon}}\|I'v_{H}\|_{X^{0, \frac{1}{3} + \epsilon}} \|I'v_{L}\|_{X^{\frac{1}{2} + \epsilon, \frac{1}{2} - \frac{\epsilon}{2}}} \|I'v_{H}\|_{X^{0, \frac{1}{3} + \epsilon}} \\
\|I'v_{H}\|_{X^{1,\frac{1}{3} +\epsilon}}\|I'v_{L}\|_{X^{\frac{1}{2} + \epsilon, \frac{1}{2} - \frac{\epsilon}{2}}}.
\end{align*}
We neglect extra derivatives corresponding to $N_{3}$ and $N_{6}$ to get
$$\textit{Integral 4} \lesssim N'^{-3} \|I'v\|^{6}_{X^{1,\frac{1}{2}}}.$$
\end{proof}

\begin{proof}[Integral 6.] \let\qed\relax

Clearly, we have $|k_{4}|,|k_{5}|, |k_{6}| \gtrsim N'.$ Hence, the worst condition is $|k_{3}|, |k_{2}| \ll N'$ and $|k_{1}| \gtrsim N'.$. From definition of $m$, we get
\begin{align} \label{second term 4}
\left|\left( 1- \frac{m(k_{4} + k_{5} +k_{6})}{m(k_{4})m({k_{5}})m({k_{6}})} \right)\right| \lesssim \left| \frac{m(k_{1})}{m(k_{4})m(k_{5})m(k_{6})} \right|  \lesssim N'^{-2}|k_{5}||k_{6}|.
\end{align}
From Plancherel's theorem, H\"{o}lder's inequality, Proposition \ref{st1}, Lemma \ref{Infinity Estimate} and inequalities (\ref{second term 2}) and (\ref{second term 4}), we get
\begin{align*}
\textit{Integral 6} \lesssim N'^{-2}\| \mathcal{F}^{-1} (\langle \sigma \rangle^{3\epsilon}\langle k_{1} \rangle \widetilde{I'v_{H}})\|_{L^{4}_{x,t}} \| \mathcal{F}^{-1} (\langle \sigma_{2} \rangle^{-\frac{\epsilon}{2}} \widetilde{I'v_{L}})\|_{L^{\infty}_{x,t}} \| \mathcal{F}^{-1} (\langle \sigma_{3} \rangle^{-\frac{\epsilon}{2}} \widetilde{I'v_{L}})\|_{L^{\infty}_{x,t}} \\
\|\mathcal{F}^{-1} (\langle k_{4} \rangle \widetilde{I'v_{H}})\|_{L^{4}_{x,t}} \| \mathcal{F}^{-1} (\langle k_{5}\rangle \widetilde{I'v_{H}})\|_{L^{4}_{x,t}}\|I'v_{H})\|_{L^{4}_{x,t}} \\
\lesssim N'^{-2}\|I'v_{H}\|_{X^{1,\frac{1}{3}+4\epsilon}}\|I'v_{L}\|_{X^{\frac{1}{2} + \epsilon, \frac{1}{2} - \frac{\epsilon}{2}}} \|I'v_{L}\|_{X^{\frac{1}{2} + \epsilon, \frac{1}{2} - \frac{\epsilon}{2}}} \|I'v_{H}\|_{X^{1, \frac{1}{3} + \epsilon}} \\ 
\|I'v_{H}\|_{X^{1,\frac{1}{3} +\epsilon}}\|I'v_{H}\|_{X^{0, \frac{1}{3} + \epsilon}}.
\end{align*}

We neglect extra derivatives corresponding to $N_{2}$ and $N_{3}$ to get
$$\textit{Integral 4} \lesssim N'^{-3 } \|I'v\|^{6}_{X^{1,\frac{1}{2}}}.$$
\end{proof}
\begin{rem}
Note that the sexalinear term does not depend on the scaler parameter $\lambda$.
\end{rem}

\begin{center}
\section*{Appendix}
\end{center}
The following example is given by Prof. Nobu Kishimoto which explain why we need to use the inhomogeneous Soblev norm in place of homogeneous norm. In fact, for homogeneous norm the Proposition \ref{TL Main result} does not hold.
Define the space $\dot X^{s,\frac{1}{2}}$ via the norm
$$\|u\|_{\dot X^{s,\frac{1}{2}}} = \||k|^{s}\langle \tau - 4\pi^{2}k^{3} \rangle^{b}\tilde{u}(k,\tau)\|_{L^{2}((dk)_{\lambda},d\tau)}.$$
\begin{eg}
Assume $\lambda \geq 1$ and $\sqrt{\lambda} \in \mathbb{Z}/\lambda.$ Let $\lambda \mathbb{T} = \mathbb{R}/\lambda \mathbb{Z}.$ We define the functions $v_{1},v_{2},v_{3}$ on $\lambda\mathbb{T} \times \mathbb{R}$ by
\begin{align*}
\tilde{v}_{1}(k,\tau) &= 1_{[-1,1]}(\tau - 4\pi^{2}k^{3})\cdot1_{\{1/\lambda\}}(k), \\
\tilde{v}_{2}(k,\tau) &= 1_{[-1,1]}(\tau - 4\pi^{2}k^{3})\cdot1_{\{-2/\lambda\}}(k), \\
\tilde{v}_{3}(k,\tau) &= 1_{[-1,1]}(\tau - 4\pi^{2}k^{3})\cdot1_{\{\sqrt{\lambda}\}}(k).
\end{align*}
We have
\begin{align*}
\|v_{1}\|_{\dot X^{s,\frac{1}{2}}} \sim \|v_{2}\|_{\dot X^{s,\frac{1}{2}}} \sim \left(\frac{1}{\lambda}\right)^{s} \lambda^{-\frac{1}{2}} = \lambda^{s-\frac{1}{2}}, \\
 \|v_{3}\|_{\dot X^{s,\frac{1}{2}}} \sim (\sqrt{\lambda})^{s} \lambda^{-\frac{1}{2}} = \lambda^{\frac{s}{2}-\frac{1}{2}}.
\end{align*}
We see that 
\begin{align*}
&\left|\tilde{J}[v_{1},v_{2},v_{3}](\sqrt{\lambda})-\frac{1}{\lambda},\tau)\right| \\
&\sim \sqrt{\lambda}\left|\int_{\tau_{1} + \tau_{2} + \tau_{3} = \tau}\int_{\substack{k_{1} + k_{2} + k_{3} = \sqrt{\lambda} - \lambda^{-1} \\ (k_{1} + k_{2})(k_{2} + k_{3})(k_{3} + k_{1}) \neq 0}} \prod\limits_{j=1}^{3} \tilde{v}_{j}(k_{j},\tau_{j})(dk_{1})_{\lambda}(dk_{2})_{\lambda}d\tau_{1}d\tau_{2} \right| \\
&\gtrsim \lambda^{-3/2} 1_{[-1,1]}(\tau -4\pi^{2}(\sqrt{\lambda}-\lambda^{-1})^{3} + 4\pi^{2}M),
\end{align*}
where
$$M = 3\left(\frac{1}{\lambda} + \frac{-2}{\lambda} \right)\left(\frac{-2}{\lambda} + \sqrt{\lambda} \right)\left(\sqrt{\lambda} + \frac{1}{\lambda}  \right),$$
so that $|M| \sim 1.$ Hence, we have 
$$\|J[v_{1},v_{2},v_{3}]\|_{\dot X^{s,\frac{1}{2}}} \gtrsim \lambda^{-\frac{3}{2}}\cdot (\sqrt{\lambda})^{s}\lambda^{-\frac{1}{2}} =\lambda^{\frac{s}{2} - 2}.$$
Therefore, if the trilinear estimate 
$$\|J[v_{1},v_{2},v_{3}]\|_{\dot X^{s,\frac{1}{2}}} \lesssim \lambda^{0+} \|v_{1}\|_{\dot X^{s,\frac{1}{2}}} \|v_{2}\|_{\dot X^{s,\frac{1}{2}}} \|v_{3}\|_{\dot X^{s,\frac{1}{2}}}$$
were true, it would imply that
$$\lambda^{\frac{s}{2} - 2} \lesssim (\lambda^{-s-\frac{1}{2}})^{2}\lambda^{\frac{s}{2} - \frac{1}{2}} \hspace{4mm}\Leftrightarrow \hspace{2mm} \lambda^{2s} \lesssim \lambda^{\frac{1}{2}+} \hspace{2mm} (\lambda \geq 1).$$
For large $\lambda,$ this holds only if $s \leq \frac{1}{4}+.$
\end{eg}

\end{proof}
\section*{Acknowledgements}
The author would like to express his deep gratitude to Professor Yoshio Tsutsumi for giving him valuable suggestions and constant encouragement. The author is also greatful to Professors Kotaro Tsugawa and Nobu Kishimoto and Mr. Minjie Shan for fruitful discussions. Finally, the author is indebted to the referee for his or her valuable remarks.

\end{document}